\DeclareMathOperator{\Br}{Br}
\DeclareMathOperator{\Int}{Int}
\DeclareMathOperator{\NC}{NC}
\DeclareMathOperator{\Ker}{Ker}
\DeclareMathOperator{\len}{len}
\DeclareMathOperator{\Red}{Red}
\DeclareMathOperator{\id}{id}
\DeclareMathOperator{\Art}{Art}
\newcommand\coloneqq{:=}
\newcommand\dapprox{\mathrel{\dot\approx}}
\newcommand{\Z}{\mathbb{Z}}
\newcommand{\C}{\mathbb{C}}
\numberwithin{equation}{section}
\theoremstyle{definition}
\newtheorem{teo}[equation]{Theorem}
\newtheorem{lemma}[equation]{Lemma}
\newtheorem{prop}[equation]{Proposition}
\newtheorem{cor}[equation]{Corollary}
\newtheorem{oss}[equation]{Remark}
\title{Free products and the isomorphism between\\standard and dual Artin groups}
\author{Sirio Resteghini\thanks{sirio.resteghini@phd.unipi.it}}
\date{ }
\begin{document}

\maketitle

\abstract{
Given a Coxeter system with a fixed Coxeter element, there is a surjective group morphism $\Psi$ from the standard to the dual Artin groups. We give conditions that are sufficient, necessary or equivalent to $\Psi$ being an isomorphism. In particular, we prove that if the Hurwitz action on the reduced words of any element in the noncrossing partition poset is transitive, and if the Hurwitz action on the reduced words of the Coxeter element has the same stabilizer as essentially the same action viewed in the standard Artin group, then $\Psi$ is an isomorphism. Both of those conditions are already known in some cases, notably in spherical and affine types. We then prove that taking the free (or direct) product of groups that satisfy those two conditions yields another group that, with a suitable Coxeter element, also satisfies them.
}
\tableofcontents


\section{Introduction}
Given a Coxeter system $(W,S)$ with set of reflections $T$ and Coxeter element $h$, one can define the standard Artin group $\Art(W,S)$ and the dual Artin group $\Art^*(W,T,h)$. It is conjectured that these two groups are isomorphic. This conjecture has been proven in the spherical \cite{bessis2002dual}, affine \cite{mccammond2017artin,giove}, free \cite{bessis} and rank 3 \cite{rank3} cases. This was instrumental in proving the long-standing $K(\pi,1)$ conjecture for Artin groups of affine type \cite{giove} and of rank 3 \cite{rank3}.

\subsection{Main results}
In this paper, we define a surjective group morphism $\Psi:\Art(W,S)\rightarrow \Art^*(W,T,h)$ (Proposition \ref{prop-def-psi}), generalizing a construction from the spherical \cite{bessis2002dual} and affine \cite{mccammond2017artin} cases. We then give conditions that are sufficient, necessary or equivalent to $\Psi$ being an isomorphism. Most of those conditions are related to the Hurwitz action, which is an action of the braid group on tuples of elements of a group, and in particular is well-defined on the reduced $T$-words of an element of $W$. Of particular note is the following:

\vspace{8pt}
\noindent\textbf{Theorem \ref{thm-focus}.} Let $S=\{s_1,\dots,s_n\}$, such that $h=s_1\dots s_n$. Let $\overline{s}_1,\dots,\overline{s}_n$ be the generators of $\Art(W,S)$ corresponding to $s_1,\dots,s_n$. If the following conditions are met, then $\Psi$ is an isomorphism:
\begin{itemize}
\item $(W,(s_1,\dots,s_n))$ is \emph{well-stabilized}, i.e. the stabilizers of $(s_1,\dots,s_n)$ and $(\overline{s}_1,\dots,\overline{s}_n)$ in the Hurwitz action are equal,
\item $(W,T,h)$ is \emph{pan-transitive}, i.e. the Hurwitz action on the set of reduced $T$-words of $a$ is transitive for each $a\in [1,h]_T$.
\end{itemize}
\vspace{8pt}

There are several cases where one or both of these conditions are known (Remark \ref{remark-known-cases}).

Now, let $(W_1,S_1),(W_2,S_2)$ be two Coxeter systems with Coxeter elements $h_1=s_1\dots s_k$ and $h_2=s_{k+1}\dots s_n$, and let $(W,S)=(W_1*W_2,S_1\sqcup S_2)$ with Coxeter element $h_1h_2$. Our main results are the following:

\vspace{8pt}
\noindent\textbf{Theorem \ref{thm-fg-hh}.} If $(W_1,(s_1,\dots,s_k))$ and $(W_2,(s_{k+1},\dots,s_n))$ are well-stabilized, then so is $(W,(s_1,\dots,s_n))$.

\vspace{8pt}
\noindent\textbf{Theorem \ref{thm-ele}.} If $(W_1,T_1,h_1)$ and $(W_2,T_2,h_2)$ are pan-transitive, then so is $(W,T,h)$.
\vspace{8pt}

From these, we get that if the two conditions from Theorem \ref{thm-focus} are satisfied for both $(W_1,S_1,h_1)$ and $(W_2,S_2,h_2)$, then these are also satisfied for $(W,S,h)$, and thus $\Psi:\Art(W,S)\rightarrow \Art^*(W,T,h)$ is an isomorphism (Theorem \ref{thm-final}). It is worth noting, however, that not all Coxeter elements of $(W,S)$ can be written as the product of a Coxeter element of $(W_1,S_1)$ and a Coxeter element of $(W_2,S_2)$.

Finally, we notice that the final result also holds if we consider the direct product instead of the free product (Theorem \ref{thm-direct}).

\subsection{Techniques used}
The dual Artin group $\Art^*(W,T,h)$ is defined from the \emph{noncrossing partition poset} $[1,h]_T$, which is well-studied in the free case \cite{bessis}. In particular, we can see the free group $F_n$ as the fundamental group of $\mathbb C$ with $n$ punctures, and say that an element of $F_n$ is \emph{noncrossing} if it is represented by a \emph{noncrossing loop}, i.e. a loop that does not self-intersect and is followed counterclockwise. There is a partial order on the set of noncrossing loops, which is given by the inclusion of the ``interior" of the loops. This passes to a partial order on $\text{NC}$, the set of noncrossing elements of $F_n$, and $[1,h]_T$ is isomorphic to the subposet $\text{NC}_g$ of elements in $\text{NC}$ below a fixed maximal element $g$ (Theorems \ref{ncg-equal-1g} and \ref{thm-nc-dual-free}).

In the case of a general Coxeter system $(W,S)$, there is an obvious projection $F_n\twoheadrightarrow W$, which restricts to a surjective poset morphism between the noncrossing partition posets that has nice properties (Proposition \ref{poset-morph-free-cox}). Using this, we can relate the noncrossing partition poset of any Coxeter system with fixed Coxeter element to $\text{NC}_g$. This allows us to see a noncrossing loops $\gamma$ as representing an element $a\in [1,h]_T$. In the case in which $W$ is a free product $W=W_1*W_2$, we can see segments of $\gamma$ as representing the elements that appear in the unique reduced factorization of $a$ in elements of $W_1\cup W_2$ (Figure \ref{fig-ra2}).

Another key piece of the puzzle is the action $\star$ of the braid group $\Br_n$ on the free group $F_n$, where we see $\Br_n$ as the mapping class group of a disk with $n$ punctures relative to its border, and $F_n$ as the fundamental group of said punctured disk. We define this action in a specific way that relates it closely to the Hurwitz action (Corollary \ref{cor-starhur}). This allows us to state conditions that are equivalent to the pan-transitive condition in terms of the $\star$ action (Proposition \ref{prop-rainbow-tfae}). The simplest of this conditions states that, if $H$ is the stabilizer of $(s_1,\dots,s_n)$ with respect to the Hurwitz actions, then for each $a\in [1,h]_T$ the action of $H$ via $\star$ is transitive on the set of elements of $\text{NC}_g$ that are projected onto $a$. There are other equivalent conditions which deal with certain tuples of elements of $[1,h]_T$ instead of a single element, and those are key in the proof of Theorem \ref{thm-ele}.

\subsection{Structure of this paper}
In Section \ref{sect-bg}, we give the basic definitions about Coxeter and Artin groups, we define the noncrossing partition poset and the dual Artin group. We then define the Hurwitz action, and we recall the results on free groups from \cite{bessis} that we need.

In Section \ref{sect-morph}, we relate the noncrossing partition poset of a general Coxeter system to that of a free case, by proving that the obvious projection from the free group to any Coxeter group restricts to a surjective poset morphism between the noncrossing partition posets that has nice properties (Proposition \ref{poset-morph-free-cox}). We then define the morphism $\Psi$, give a presentation for the dual Artin group that uses fewer relations, and use that presentation to give conditions that are sufficient, necessary or equivalent to $\Psi$ being an isomorphism, among which is Theorem \ref{thm-focus}.

In Section \ref{sect-ele} we give many conditions equivalent to the pan-transitive condition. First, we give some that are nicer properties about the poset morphism from Proposition \ref{poset-morph-free-cox}. Then, we define the action $\star$ of the braid group $\Br_n$ on the free group $F_n$, we explain how it is related to the Hurwitz action, and we prove the equivalence between the pan-transitive condition and some transitivity conditions of the $\star$ action (Proposition \ref{prop-rainbow-tfae}).

Finally, in Section \ref{sect-fg} we prove our main results. We make use of Proposition \ref{poset-morph-free-cox} to prove Theorem \ref{thm-fg-hh}, and we make use of the $\star$ action to prove Theorem \ref{thm-ele}.

\subsection{Acknowledgements}
The author thanks his Ph.D. supervisor Mario Salvetti for his valuable advice.

The author also thanks Luis Paris for sharing his unpublished notes \cite{parisnotes}.

Finally, the author acknowledges the MIUR Excellence Department Project awarded to the Department of Mathematics, University of Pisa, CUP I57G22000700001.
\section{Background}
\label{sect-bg}
\subsection{Coxeter and Artin groups}
\label{cox-bg}
Let $S$ be a set, and let $m:S \times S \rightarrow \{1,2,3,4,\dots\} \cup \{\infty\}$ be a symmetric funtion such that $m(s,s') = 1$ if and only if $s=s'$. Let $W$ the group presented as follows:
\[W \coloneqq \left<S\, \Big |\,  (ss')^{m(s,s')}=1 \quad \forall s,s'\in S \text{ such that } m(s,s')\ne\infty \right>\]
We say that $W$ is a \emph{Coxeter group}, and that $S$ is its set of \emph{simple reflections}. The couple $(W,S)$ is called a \emph{(standard) Coxeter system}, the function $m$ is called its \emph{Coxeter matrix}. The \emph{Coxeter graph} of $(W,S)$ is the graph that has $S$ as the set of vertices, and has an edge between $s$ and $s'$ labeled with $m(s,s')$ for each $s,s'\in S$ such that $m(s,s')\ge 3$. The \emph{rank} of $(W,S)$ is the cardinality of $S$. In this paper, we assume that the rank is a finite number $n$. A \emph{Coxeter element} for $(W,S)$ is the product of all elements of $S$ in some order. The \emph{set of reflections} of $(W,S)$ is the set $T$ of all elements of $W$ that are conjugate to an element of $S$. The couple $(W,T)$ is called a \emph{dual Coxeter system}.

Let $S=\{s_1, \dots, s_n\}$. We can define the \emph{(standard) Artin group}, also known as the \emph{generalized braid group}, associated to $(W,S)$ as follows:
\[\Art(W,S) \coloneqq \left<\overline{s}_1,\overline{s}_2,\dots,\overline{s}_n\, \Big |\,  [\overline{s}_i|\overline{s}_j]_{m(s_i,s_j)} \! = \! [\overline{s}_j|\overline{s}_i]_{m(s_i,s_j)} \; \forall i<j \text{ such that } m(s_i,s_j)\ne\infty \right>\]
Where we denote by $[a|b]_k$ the product $ababa\dots$ with $k$ factors. Notice that if we added the relations $\overline{s}_i^2=1$  for each $i$ we would obtain another presentation for the Coxeter group.

\subsection{Reduced words and lengths}
Let $G$ be a group, let $a\in G$, and let $X$ be a generating subset of $G$. We say that $(g_1, \dots, g_k)$ is an \emph{$X$-word} of \emph{length} $k$ representing $a$ if $g_1 g_2 \dots g_k = a$ and for each $i$ we have $g_i \in X$ or $g_i^{-1} \in X$. We denote by $\len_X(a)$ the minimal length of an $X$-word representing $a$, and by $\Red_X(a)$ the set of \emph{reduced $X$-words} representing $a$, i.e. the set of $X$-words representing $a$ of length $\len_X(a)$. We define a partial order relation $\le_X$ on $G$ as follows: we have $a \le_X b$ if and only if there exists $c\in G$ such that $ac=b$ and $\len_X(a)+\len_X(c)=\len_X(b)$. Equivalently, $a\le_X b$ if and only if there exists a reduced $X$-word representing $b$ containing an $X$-word representing $a$ as a prefix. If $X$ is closed by conjugation, we also have $a\le_X b$ if and only if there exists a reduced $X$-word representing $b$ containing an $X$-word representing $a$ as a suffix. In any case, if $a<_X b$, we denote the interval between $a$ and $b$ in the poset $(G,\le_X)$ by $[a,b]_X$.

\subsection{The dual Artin group}
\label{artinduale}

Let $(W,S)$ be a standard Coxeter system with set of reflections $T$. We fix a Coxeter element $h$. The interval $[1,h]_T$ is also known as the \emph{noncrossing partition poset} associated to $(W,T,h)$. We endow $[1,h]_T$ with an edge-labeling, giving a label associated with $t\in T\cap [1,h]_T$, which we denote by $\{t\}$, to all intervals of the form $[a,at]_T$ such that $1 \le_T a <_T at \le_T h$.

The \emph{dual Artin group}, also known as the \emph{dual braid group}, associated with $(W,T,h)$ is the group $\Art^*(W,T,h)$ generated by the labels of $[1,h]_T$, with the following relations:
\[\{t_1\}\{t_2\}\dots\{t_k\}=\{t_1'\}\{t_2'\}\dots\{t_k'\}\quad\forall a\in [1,h]_T\;\forall (t_1,\dots,t_k),(t_1',\dots,t_k')\in\Red_T(a)\]
For each $a\in [1,h]_T$, we denote by $\{a\}$ the element of $\Art^*(W,T,h)$ equal to the product $\{t_1\}\{t_2\}\dots\{t_k\}$, where $(t_1,\dots,t_k)\in\Red_T(a)$.

It is conjectured that the stanrdard Artin group $\Art(W,S)$ and the dual Artin group $\Art^*(W,T,h)$ are always isomorphic, for each Coxeter system $(W,S)$ and for each Coxeter element $h$.

\subsection{The Hurwitz action}
Let $k$ be a positive integer. The \emph{braid group on $k$ elements} is defined as follows:
\[\Br_k \coloneqq \left<\sigma_1, \dots,\sigma_{k-1}\,|\,\sigma_i\sigma_{i+1}\sigma_i=\sigma_{i+1}\sigma_i\sigma_{i+1}\quad\forall i\in\{1,\dots,k-2\}\right>\]
Given a group $G$, there is an action of $\Br_k$ on $G^k$, called the \emph{Hurwitz action}, defined as follows:
\[\begin{split}\sigma_i \cdot (g_1, \dots, g_k) = &\, (g_1, \dots, g_{i-1}, g_i g_{i+1} g_i^{-1}, g_i, g_{i+2}, \dots, g_k) \\
\sigma_i^{-1} \cdot (g_1, \dots, g_k) = &\, (g_1, \dots, g_{i-1},g_{i+1},g_{i+1}^{-1} g_i g_{i+1}, g_{i+2}, \dots, g_k)\end{split}\]
Notice that, if $X$ is a generating subset of $G$ that is closed by conjugation and $a\in G$, then the Hurwitz action of $\Br_{\len_X(a)}$ on $\Red_X(a)$ is well-defined. In particular, if $(W,T)$ is a rank $n$ dual Coxeter system with Coxeter element $h$, then the Hurwitz action of $\Br_n$ on $\Red_T (h)$ is well-defined.
\begin{teo}
The Hurwitz action on $\Red_T(h)$ is transitive. (\cite{igusa2009exceptional}, see also \cite{transhurwitz})
\label{hurwitz-cox}
\end{teo}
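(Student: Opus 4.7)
The plan is an induction on the rank $n$ of the Coxeter system. The base case $n=1$ is trivial, since $\Red_T(h)$ is a singleton.

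For the inductive step, I would aim to establish the following \emph{first-letter lemma}: for every reflection $t\in T$ with $t\le_T h$, and every $w\in\Red_T(h)$, the Hurwitz orbit of $w$ contains a reduced word whose first entry is $t$. Granted this, given two reduced $T$-words $(t_1,\dots,t_n)$ and $(t_1',\dots,t_n')$ for $h$, I apply the lemma with $t:=t_1'$ to move $(t_1,\dots,t_n)$, within its Hurwitz orbit, to a word of the form $(t_1',r_2,\dots,r_n)$. Then both $(r_2,\dots,r_n)$ and $(t_2',\dots,t_n')$ are reduced $T$-words for $h':=t_1'h$, which has $T$-length $n-1$ because $t_1'\le_T h$. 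Invoking that $h'$ is a Coxeter element of a parabolic subsystem of rank $n-1$, the inductive hypothesis supplies a braid in $\Br_{n-1}$ intertwining these two shorter words; this braid lifts to $\Br_n$ via the embedding $\sigma_i\mapsto\sigma_{i+1}$, which acts on positions $2,\dots,n$ and fixes position $1$, completing the induction.

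The main obstacle is the first-letter lemma. Because the $i$-th entry of any reduced word can be cycled to position $1$ by the braid $\sigma_{i-1}^{-1}\sigma_{i-2}^{-1}\cdots\sigma_1^{-1}$, the lemma is equivalent to showing that the set of reflections appearing in some entry of some member of the Hurwitz orbit of $w$ is exactly $\{t\in T : t\le_T h\}$. One inclusion is automatic, since Hurwitz moves preserve the product and the total number of letters, forcing every entry of an orbit word to lie below $h$. The reverse inclusion — every $t\le_T h$ is attained as some entry — is the substantive content of the theorem, and is where genuine input specific to the Coxeter setting is needed: the approach of Igusa–Schiffler and the subsequent literature exploits a careful analysis of root subsystems, compatible orientations, and the lattice structure of $[1,h]_T$ to realize each such $t$ concretely as a first letter. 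Since the theorem is cited here from the literature, I would not reproduce these deeper arguments and would content myself with recording the reduction above.
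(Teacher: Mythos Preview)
The paper does not prove this theorem; it is simply quoted from \cite{igusa2009exceptional} and \cite{transhurwitz}, so there is no argument here to compare against.

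Your reduction contains a real gap, and it lies not in the first-letter lemma (which you rightly flag as the substantive part) but in the step you treat as routine. Having matched the first letters, you assert that $h'=t_1'h$ ``is a Coxeter element of a parabolic subsystem of rank $n-1$'' and invoke the inductive hypothesis there. For an arbitrary reflection $t_1'\le_T h$ this is not known in general: the assertion that every element of $[1,h]_T$ is a Coxeter element of a suitable Coxeter subsystem is exactly condition~1 of Proposition~\ref{ele-forte}, and Remark~\ref{remark-known-cases} records that it is established only in the spherical, affine, rank-$3$, and Hubery--Krause crystallographic cases. Since Theorem~\ref{hurwitz-cox} holds for \emph{all} finite-rank Coxeter systems, your induction cannot rest on that hypothesis. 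One might try to repair this by applying the first-letter lemma to \emph{both} words with $t=s_1$, so that $h'=s_2\cdots s_n$ is visibly a Coxeter element of the standard parabolic $W'=\langle s_2,\dots,s_n\rangle$; but then you still need the remaining $(n-1)$-tuples, which are a priori only in $\Red_T(h')$, to lie in $\Red_{T'}(h')$ with $T'=T\cap W'$, and this containment is again part of what has to be proved, not a formality.

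The proofs in the cited references sidestep this circularity: rather than peeling off an arbitrary first reflection, they show by a direct root-system argument (decreasing the depth, equivalently the $S$-length, of a chosen entry via Hurwitz moves) that any reduced $T$-word for $h$ can be brought to one whose entries are all simple, after which the conclusion is immediate. Your first-letter lemma is in the same spirit, but the induction you build on top of it does not stand on its own.
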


\subsection{The case of free groups}
\label{subs-fg}
All results in this Subsection are from \cite{bessis}.

Let $F_n$ be the free group freely generated by $f_1, \dots, f_n$, and let $g\coloneqq f_1\dots f_n$. We choose pairwise distinct points $x_0,x_1,\dots,x_n\in\C$ and identify $F_n$ with $\pi_1(\C\setminus\{x_1,\dots,x_n\},x_0)$, in such a way that $f_1,\dots,f_n,g$ are represented by the loops drawn in Figure \ref{fig-f1fn}.
\begin{figure}[h]
\centering

\begin{tikzpicture}
\draw [thick] (0,0) circle [radius=3];
\draw [fill] (-3,0) circle [radius=0.05];
\node[left] at (-3,0) {$x_0$};
\node[right] at (3,0) {$g$};
\node at (0,0.95) {$\vdots$};
\draw [fill] (0,-2.1) circle [radius=0.05];
\node[left] at (0,-2.1) {$x_1$};
\node[right] at (0.55,-2.1) {$f_1$};
\draw [fill] (0,-1.15) circle [radius=0.05];
\node[left] at (0,-1.15) {$x_2$};
\node[right] at (0.55,-1.15) {$f_2$};
\draw [fill] (0,-0.2) circle [radius=0.05];
\node[left] at (0,-0.2) {$x_3$};
\node[right] at (0.55,-0.2) {$f_3$};
\draw [fill] (0,2.1) circle [radius=0.05];
\node[left] at (0,2.1) {$x_n$};
\node[right] at (0.55,2.1) {$f_n$};
\draw [thick] (-3,0) to [out=-45,in=180] (0,-1.75) -- (0.2,-1.75) arc [radius=0.35, start angle=90, end angle=-90] -- (0,-2.45) to [out=180,in=-60] (-3,0);
\draw [thick] (-3,0) to [out=-20,in=180] (0,-0.8) -- (0.2,-0.8) arc [radius=0.35, start angle=90, end angle=-90] -- (0,-1.5) to [out=180,in=-35] (-3,0);
\draw [thick] (-3,0) to [out=5,in=180] (0,0.15) -- (0.2,0.15) arc [radius=0.35, start angle=90, end angle=-90] -- (0,-0.55) to [out=180,in=-10] (-3,0);
\draw [thick] (-3,0) to [out=60,in=180] (0,2.45) -- (0.2,2.45) arc [radius=0.35, start angle=90, end angle=-90] -- (0,1.75) to [out=180,in=45] (-3,0);
\end{tikzpicture}
\caption{Loops representing $f_1,f_2,\dots,f_n,g$. The loops are followed counterclockwise.}
\label{fig-f1fn}
\end{figure}
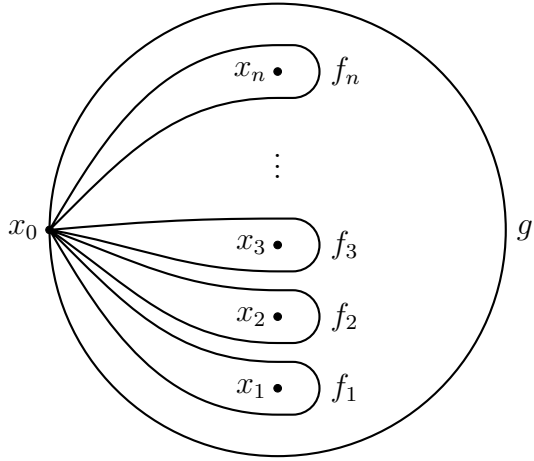

We say that a loop in $\C\setminus\{x_1,\dots,x_n\}$ based in $x_0$ is \emph{noncrossing} if it does not self-intersect and it is followed counterclockwise. Each noncrossing loop $\gamma$ divides $\C$ in two connected components, and we denote by $\Int(\gamma)$ the bounded one. We endow the set of noncrossing loops with a partial order relation $\subseteq$, such that $\gamma\subseteq\delta$ if and only if $\Int(\gamma)\subseteq\Int(\delta)$.

We say that an element of $F_n$ is \emph{noncrossing} if it is represented by a noncrossing loop. We denote by $\NC$ the set of noncrossing elements of $F_n$, and we endow $\NC$ with the partial order relation $\subseteq$, such that $a\subseteq b$ if and only if there exist noncrossing loops $\alpha,\beta$ representing $a,b$ respectively such that $\alpha\subseteq\beta$. We denote the interval $[1,g]$ in the poset $(\NC,\subseteq)$ by $\NC_g$.

Let $R\subseteq F_n$ be the set of all elements that are conjugate to an element of $\{f_1, \dots, f_n\}$. In this Subsection, let $S=\{s_1,\dots,s_n\}$, $(W,S)$ the Coxeter system such that $s_i s_j$ has infinite order for each $i\ne j$, let $h\coloneqq s_1\dots s_n$, and let $T$ the set of reflections of $(W,S)$.
\begin{teo}
For each $a\in[1,g]_R$ and for each $b\in[1,h]_T$, the Hurwitz actions of $\Br_n$ on $\Red_R(a)$ and $\Red_T(b)$ are both free and transitive.
\label{hurwitz-fg}
\end{teo}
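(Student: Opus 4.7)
The plan is to use the topological interpretation $F_n = \pi_1(\C \setminus \{x_1, \ldots, x_n\}, x_0)$ for the free group case, and then transfer to the Coxeter case via the projection $F_n \twoheadrightarrow W$ sending $f_i \mapsto s_i$.

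For the free group case, I would first establish a bijection between $\Red_R(a)$ and ordered systems $(\gamma_1, \ldots, \gamma_k)$ of pairwise-disjoint (away from $x_0$) noncrossing loops contained in $\overline{\Int(\alpha)}$, each encircling exactly one puncture, whose concatenation in order is homotopic to $\alpha$ (where $\alpha$ is a noncrossing representative of $a$ and $k = \len_R(a)$). Under this bijection, the Hurwitz generator $\sigma_i$ corresponds geometrically to swapping the adjacent loops $\gamma_i$ and $\gamma_{i+1}$, with the conjugation in the Hurwitz formula arising from the way the loops are based at $x_0$. Transitivity then reduces to the combinatorial fact that any two orderings of a common system of noncrossing loops can be connected by adjacent swaps. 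For freeness, suppose $\beta \in \Br_k$ fixes a tuple $(r_1, \ldots, r_k)$; the induced permutation on conjugacy classes is trivial, so $\beta$ lies in the pure braid group $P_k$, and using the interpretation of $\Br_k$ as the mapping class group of the punctured disk acting faithfully on the isotopy classes of such loop systems, one concludes $\beta = 1$.

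For the Coxeter case, $W = (\Z/2)^{*n}$ is the free Coxeter group, and the projection $\pi\colon F_n \to W$ is Hurwitz-equivariant and sends $R$ surjectively onto $T$. For each $b \in [1,h]_T$ there is a canonical lift $\tilde{b} \in \NC_g$, and $\pi$ induces a bijection $\Red_R(\tilde{b}) \to \Red_T(b)$: each $t_i = w s_j w^{-1}$ lifts uniquely to $\tilde{w} f_j \tilde{w}^{-1}$ with $\tilde{w}$ the canonical reduced lift of $w$. Both transitivity and freeness of the Hurwitz action on $\Red_T(b)$ then follow at once from the free group case.

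The main obstacle I expect is making the geometric bijection in the free group case fully rigorous, in particular verifying that the naive swap of adjacent loops realizes exactly the Hurwitz conjugation, since rebasing at $x_0$ can introduce unintended extra conjugations; a secondary obstacle is the faithfulness argument underlying freeness, where the cleanest route is through the mapping class group picture rather than by directly controlling pure braid generators.
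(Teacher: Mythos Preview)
The paper does not give a proof of this theorem; it is quoted from \cite{bessis} as background. Your outline follows the standard route (geometric model in the punctured disk, mapping class group interpretation, then transfer to $W$ via Theorem~\ref{thm-nc-dual-free}), but there is a real gap in the transitivity step for $F_n$.

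You write that transitivity ``reduces to the combinatorial fact that any two orderings of a common system of noncrossing loops can be connected by adjacent swaps.'' This is not what is needed: two elements of $\Red_R(a)$ are almost never reorderings of a common underlying set of loops, because a Hurwitz move replaces $(r_i,r_{i+1})$ by $(r_i r_{i+1} r_i^{-1}, r_i)$ and so changes the set of homotopy classes involved. What you actually need is that $\Br_k$, viewed as the mapping class group of the $k$-punctured disk bounded by a noncrossing representative $\alpha$ of $a$, acts transitively on isotopy classes of ordered systems of $k$ disjoint simple loops, each enclosing one puncture, whose concatenation is homotopic to the boundary. That is true (it is a form of the change-of-coordinates principle for arc systems on surfaces), but it is a genuine topological input, not the trivial symmetric-group statement you invoke. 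Your freeness argument via faithfulness of the $\Br_k$-action on $\pi_1$ of the punctured disk is correct in outline. For the transfer to $W$, note that $\pi|_R\colon R\to T$ is not injective (for instance $f_1 f_2 f_1^{-1}$ and $f_1^{-1} f_2 f_1$ have the same image $s_1 s_2 s_1$), so your ``unique lift $\tilde{w} f_j \tilde{w}^{-1}$'' of a reflection is not well-defined as stated; the clean argument is that Theorem~\ref{thm-nc-dual-free} is a poset isomorphism and therefore induces a $\Br_k$-equivariant bijection on maximal chains of $[1,\tilde b]_R$ and $[1,b]_T$, i.e.\ on reduced words.
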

\begin{teo}
\label{ncg-equal-1g}
The poset $(\NC_g,\subseteq)$ is a lattice and is equal to $([1,g]_R,\le_R)$. 
\end{teo}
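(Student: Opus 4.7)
The plan is to identify $\NC_g$ with $[1,g]_R$ via an order-preserving bijection, and then exploit the geometric side to establish the lattice property.

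For the inclusion $\NC_g \subseteq [1,g]_R$, I would start from a noncrossing loop $\gamma$ contained in a fixed noncrossing representative $\gamma_g$ of $g$. By the Jordan curve theorem, $\gamma$ bounds a disk $\Int(\gamma)$ containing a subset $P$ of the punctures. Choosing a system of disjoint simple arcs from $x_0$ to each puncture in $P$, the class of $\gamma$ in $F_n$ can be expressed as a product of $|P|$ conjugates of the $f_i$'s, hence as a reduced $R$-word representing some $a\in F_n$. The complementary region $\Int(\gamma_g)\setminus\overline{\Int(\gamma)}$ is handled analogously and yields a reduced $R$-word of length $n-|P|$; concatenation gives $g=a\cdot c$ as a reduced $R$-factorization, so $a\le_R g$.

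For the reverse inclusion $[1,g]_R \subseteq \NC_g$, I would use the braid group. Given $a\le_R g$, extend a reduced $R$-word for $a$ to one for $g$, say $(r_1,\dots,r_n)$ with $a=r_1\cdots r_k$. By Theorem \ref{hurwitz-fg} there is a unique $\sigma\in\Br_n$ such that $\sigma\cdot(f_1,\dots,f_n)=(r_1,\dots,r_n)$. Realizing $\sigma$ as a mapping class of the $n$-punctured disk fixing $x_0$, the images of the loops in Figure \ref{fig-f1fn} give noncrossing loops $\gamma_1,\dots,\gamma_n$ nested inside $\gamma_g$ and representing $r_1,\dots,r_n$; the concatenation $\gamma_1\cdots\gamma_k$, suitably isotoped, yields a noncrossing loop for $a$ contained in $\gamma_g$. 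The same argument, applied to $a\le_R b$ with both in $\NC_g$, produces nested noncrossing loops so that $a\subseteq b$; the geometric decomposition of the preceding paragraph gives the converse, establishing order-compatibility.

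For the lattice property, given $a,b\in\NC_g$ with representative noncrossing loops $\alpha,\beta$ placed in general position, I would construct the meet as the noncrossing element enclosing exactly the punctures lying in both $\Int(\alpha)$ and $\Int(\beta)$, and the join symmetrically, using boundedness of $\NC_g$ by $g$. The main obstacle is the careful verification that the bijection is well-defined and bidirectionally order-preserving: different reduced $R$-words for the same element could a priori yield different geometric realizations, but freeness of the Hurwitz action on $\Red_R(g)$ (Theorem \ref{hurwitz-fg}) makes the braid $\sigma$ unique, and the fact that this action preserves the nested noncrossing structure of Figure \ref{fig-f1fn} guarantees that concatenations of the transported loops remain isotopic to simple counterclockwise curves.
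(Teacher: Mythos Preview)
The paper does not give a proof of this theorem: the entirety of Subsection~\ref{subs-fg}, including Theorem~\ref{ncg-equal-1g}, is quoted from \cite{bessis}. There is therefore no in-paper argument to compare your proposal against.

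Judged on its own, your outline for the set-theoretic equality $\NC_g=[1,g]_R$ and for the compatibility of the two orders is essentially the standard topological picture and is workable, though note that you invoke Theorem~\ref{hurwitz-fg}, which comes from the same reference and whose proof in \cite{bessis} may not be logically prior to the statement you want. Minor details also need care: for $a\in\NC_g$ you should justify that the product of the $|P|$ conjugates is \emph{reduced} (this uses $\len_R(g)=n$, hence the abelianisation bound), and that the complementary loop really yields $g=ac$ in the right order.

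The lattice argument, however, has a genuine gap. You propose to take as the meet ``the noncrossing element enclosing exactly the punctures lying in both $\Int(\alpha)$ and $\Int(\beta)$'', but this does not specify an element: for a fixed set of punctures there are infinitely many noncrossing elements of $\NC_g$ enclosing precisely that set (for instance both $f_2$ and $f_1f_2f_1^{-1}$ enclose only $x_2$ when $n=3$). Moreover, $\Int(\alpha)\cap\Int(\beta)$ depends on the chosen representatives, which can be isotoped independently inside $\Int(\gamma_g)$, and in general has several connected components; you do not say how to extract a single noncrossing element from this, nor why the result is independent of choices and is actually the infimum. In \cite{bessis} the lattice property is the substantive part of the theorem and requires a more structured argument than a general-position sketch.
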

\begin{teo}
The group morphism $F_n\rightarrow W$ given by $f_i\mapsto s_i$ restricts to a poset isomorphism $([1,g]_R,\le_R)\rightarrow ([1,h]_T,\le_T)$.
\label{thm-nc-dual-free}
\end{teo}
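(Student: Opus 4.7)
The plan is to exploit the torsor structure on reduced word sets provided by Theorem~\ref{hurwitz-fg}. Let $\pi\colon F_n\to W$ denote the projection sending $f_i\mapsto s_i$. Since $\pi$ sends conjugates of $f_i$ to conjugates of $s_i$, componentwise application induces a $\Br_n$-equivariant map $\pi_*\colon \Red_R(g)\to \Red_T(h)$, well-defined because $\len_T(h)=n$ forces every length-$n$ $T$-word for $h$ to be reduced. By Theorem~\ref{hurwitz-fg} both sides are $\Br_n$-torsors, so any equivariant map between them is automatically a bijection; $\pi_*^{-1}$ will serve as the main lifting tool.

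Next I would establish length preservation. For $a\in [1,g]_R$ write $ac=g$ with $\len_R(a)+\len_R(c)=n$. Any $R$-word maps under $\pi$ to a $T$-word of the same length, so $\len_T(\pi(x))\le \len_R(x)$ for every $x\in F_n$; combined with subadditivity of $\len_T$ this gives
\[
n=\len_T(h)\le \len_T(\pi(a))+\len_T(\pi(c))\le \len_R(a)+\len_R(c)=n,
\]
so equality holds throughout. This yields both $\pi(a)\le_T h$ and $\len_T(\pi(a))=\len_R(a)$, and order preservation follows because a length-additive decomposition in $F_n$ maps to a length-additive decomposition in $W$. Surjectivity is a lifting exercise: given $c\in [1,h]_T$, extend a reduced $T$-word for $c$ to one for $h$, apply $\pi_*^{-1}$, and take the product of the first $\len_T(c)$ entries.

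The crux is injectivity. Suppose $a,a'\in [1,g]_R$ both project to $c$ of length $k$. Concatenate reduced $R$-words for $a$ and $a^{-1}g$ into some $\vec R\in \Red_R(g)$, and form $\vec R'$ analogously from $a'$. The $\pi_*$-images then have length-$k$ prefixes in $\Red_T(c)$ and length-$(n-k)$ suffixes in $\Red_T(c^{-1}h)$. Hurwitz transitivity on those two sets (again Theorem~\ref{hurwitz-fg}) provides $\beta_1\in \Br_k$ acting on the first $k$ strands and $\beta_2\in \Br_{n-k}$ acting on the last $n-k$ strands with $\beta_2\beta_1\cdot\pi_*(\vec R)=\pi_*(\vec R')$. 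Equivariance and bijectivity of $\pi_*$ then force $\beta_2\beta_1\cdot\vec R=\vec R'$; but $\beta_1$ preserves the product of the first $k$ entries and $\beta_2$ does not touch them, so that product is $a$ on one side and $a'$ on the other, yielding $a=a'$. Inverse order preservation is obtained by the same lifting idea applied to a witness $\pi(a)\le_T \pi(b)\le_T h$: one extends to a reduced $T$-word for $h$ containing nested reduced words for $\pi(a)$ and $\pi(b)$, lifts through $\pi_*^{-1}$, and uses the injectivity just proved to identify the lifts with $a$ and $b$. The main obstacle is this injectivity step, where the prefix and suffix Hurwitz braids must be coherently assembled into a single element of $\Br_n$; this is the one place that genuinely uses $\pi_*$ being a bijection of torsors rather than merely an equivariant map.
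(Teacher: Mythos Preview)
The paper does not give its own proof of Theorem~\ref{thm-nc-dual-free}; it is quoted from \cite{bessis} along with the other results of Subsection~\ref{subs-fg}. So there is no in-paper argument to compare against directly.

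Your proposal is correct. The key observation that $\pi_*\colon\Red_R(g)\to\Red_T(h)$ is an equivariant map between $\Br_n$-torsors, hence a bijection, is exactly what makes the free Coxeter case special, and your use of it in the injectivity step is the heart of the matter. The assembly of $\beta_1\in\Br_k$ and $\beta_2\in\Br_{n-k}$ into a single braid works because $\Br_k\times\Br_{n-k}$ embeds in $\Br_n$ acting block-diagonally on the tuple, and since $\pi_*$ is a bijection, $\beta_2\beta_1\cdot\pi_*(\vec R)=\pi_*(\vec R')$ forces $\beta_2\beta_1\cdot\vec R=\vec R'$. The inverse-order-preservation argument via lifting and injectivity is also sound.

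It is worth noting that the paper's proof of Proposition~\ref{poset-morph-free-cox} (which generalizes the surjective-poset-morphism half of this statement to arbitrary Coxeter groups) uses precisely the same Hurwitz-lifting mechanism you employ for surjectivity and order preservation. What it cannot do in general is your injectivity step, because for a general $W$ the Hurwitz action on $\Red_T(h)$ is transitive (Theorem~\ref{hurwitz-cox}) but not free, so $\pi_*$ is only surjective and the torsor trick is unavailable. Your argument thus isolates cleanly the one place where freeness of the Hurwitz action on the $W$-side is essential.
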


\section{A morphism from the standard Artin group to the dual Artin group}
\label{sect-morph}
Throughout Section \ref{sect-morph}, we fix a Coxeter system $(W,S)$, we fix $T$ its set of reflections, we fix an $n$-tuple $(s_1,\dots,s_n)$ such that $S=\{s_1,\dots,s_n\}$, and we fix the Coxeter element $h=s_1\dots s_n$. We shall also use the notation for the elements of $\Art^*(W,T,h)$ introduced in Subsection \ref{artinduale}. Moreover, we define $F_n,f_1,\dots,f_n,g,R$ as in Subsection \ref{subs-fg}, and we define $\pi:F_n\rightarrow W$ as the group morphism given by $f_i\mapsto s_i$.

\subsection{$[1,g]_R$ projects onto $[1,h]_T$}
We start by proving a generalization of Theorem \ref{thm-nc-dual-free}.
\begin{prop}
\label{poset-morph-free-cox}
The group morphism $\pi:F_n\rightarrow W$ restricts to a surjective poset morphism $([1,g]_R,\le_R)\rightarrow([1,h]_T,\le_T)$ that preserves height. Moreover, given $x,y\in [1,h]_T$, we have $x\le_T y$ if and only if there exist $\tilde{x},\tilde{y}\in [1,g]_R$ such that $\tilde{x}\le_R \tilde{y}$, $\pi(\tilde{x})=x$ and $\pi(\tilde{y})=y$.
\end{prop}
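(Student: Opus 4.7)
The plan is in two stages: first, show that $\pi$ restricts to a height-preserving poset morphism $[1,g]_R \to [1,h]_T$, using that $\pi$ carries $R$ into $T$ together with the length equalities $\len_R(g)=n=\len_T(h)$; second, invoke the Hurwitz transitivity of Theorem \ref{hurwitz-cox} together with the $\pi$-equivariance of the Hurwitz action to transport reduced $T$-words of $h$ back to reduced $R$-words of $g$, thereby securing both surjectivity and the stronger lifting property.

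For the first stage, I first check the two length identities. The equality $\len_R(g)=n$ follows from the abelianization $F_n\to\Z^n$: every element of $R$ maps to some $\pm e_i$, and the image of $g$ is $(1,\dots,1)$, so any $R$-word for $g$ has at least $n$ letters. The equality $\len_T(h)=n$ is a standard fact about Coxeter elements of any Coxeter system. Granted these, take $\tilde x\in [1,g]_R$ of height $k$, write $\tilde x\tilde c=g$ with additive $R$-lengths, and concatenate reduced $R$-words into $(r_1,\dots,r_n)\in\Red_R(g)$ whose length-$k$ prefix represents $\tilde x$. Applying $\pi$ gives a $T$-word for $h$ of length $n$, necessarily reduced, so its length-$k$ prefix is a reduced $T$-word for $\pi(\tilde x)$ extending to a reduced $T$-word for $h$; hence $\pi(\tilde x)\in[1,h]_T$ at height exactly $k$. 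The same trick, with three consecutive segments, shows that $\tilde x\le_R \tilde y$ in $[1,g]_R$ forces $\pi(\tilde x)\le_T\pi(\tilde y)$, so $\pi$ restricts to a height-preserving poset morphism.

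For the second stage, given $x\le_T y$ in $[1,h]_T$, I pick a reduced $T$-word $(t_1,\dots,t_n)\in\Red_T(h)$ whose length-$k$ and length-$\ell$ prefixes represent $x$ and $y$. By Theorem \ref{hurwitz-cox} there exists $\beta\in\Br_n$ with $\beta\cdot(s_1,\dots,s_n)=(t_1,\dots,t_n)$. Because $\pi$ is natural with respect to the Hurwitz action (a direct check on the generators $\sigma_i^{\pm1}$), and because $(f_1,\dots,f_n)$ lies in $\Red_R(g)$ (having the minimal length $n$), the tuple $(r_1,\dots,r_n):=\beta\cdot(f_1,\dots,f_n)$ again lies in $\Red_R(g)$ and satisfies $\pi(r_i)=t_i$ for each $i$. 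Setting $\tilde x:=r_1\dots r_k$ and $\tilde y:=r_1\dots r_\ell$ yields elements of $[1,g]_R$ with $\tilde x\le_R\tilde y$ projecting to $x$ and $y$. Specializing to $y=h$ gives surjectivity, and the converse implication in the biconditional is immediate from the fact that $\pi$ is a poset morphism.

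The only substantive external input is $\len_T(h)=n$; everything else is prefix bookkeeping combined with Hurwitz transitivity, and I do not expect any serious obstacle beyond assembling these ingredients cleanly.
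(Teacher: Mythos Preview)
Your proof is correct and follows essentially the same strategy as the paper's: concatenate reduced words to land in $\Red_R(g)$ or $\Red_T(h)$, and use Hurwitz transitivity (Theorem~\ref{hurwitz-cox}) together with $\pi$-equivariance to transport between the two. The only organizational difference is that for the forward direction you invoke $\len_T(h)=n$ directly to conclude that the image $T$-word is reduced, whereas the paper instead uses Theorem~\ref{hurwitz-fg} to write $(r_1,\dots,r_n)=\tau\cdot(f_1,\dots,f_n)$ and then observes $\tau\cdot(s_1,\dots,s_n)\in\Red_T(h)$; these amount to the same thing once one notes that $\pi$ intertwines the Hurwitz actions.
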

\begin{proof}
First, we shall prove $\pi([1,g]_R)\subseteq [1,h]_T$, and that $\pi$ preserves height. Let $a\in [1,g]_R$, let $k\coloneqq\len_R(a)$ and let $c=a^{-1}g$. Since $a\le_R g$, we know $\len_R(c)=n-k$. Fix $(r_1,\dots,r_k)\in\Red_R(a)$ and $(r_{k+1},\dots,r_n)\in\Red_R(c)$. We then have $(r_1,\dots,r_n)\in\Red_R(g)$. By Theorem \ref{hurwitz-fg}, there exists $\tau\in\Br_n$ such that $(r_1,\dots,r_n)=\tau\cdot (f_1,\dots,f_n)$. We then define $(t_1,\dots,t_n)=\tau\cdot (s_1,\dots,s_n)$, and notice that $\pi(a)=t_1\dots t_k$ is an element of $[1,h]_T$ of height $k$.

We shall now prove that $\pi$ maps $[1,g]_R$ surjectively onto $[1,h]_T$. Let $a\in [1,h]_T$, let $k\coloneqq\len_T(a)$ and let $c=a^{-1}h$. Since $a\le_T h$, we know $\len_T(c)=n-k$. Fix $(t_1,\dots,t_k)\in\Red_T(a)$ and $(t_{k+1},\dots,t_n)\in\Red_T(c)$. We then have $(t_1,\dots,t_n)\in\Red_T(h)$. By Theorem \ref{hurwitz-cox}, there exists $\tau\in\Br_n$ such that $(t_1,\dots,t_n)=\tau\cdot (s_1,\dots,s_n)$. We then define $(r_1,\dots,r_n)=\tau\cdot (f_1,\dots,f_n)$, and notice that $\tilde{a}\coloneqq r_1\dots r_k\in [1,g]_R$ is such that $\pi(\tilde{a})=a$.

We shall now prove that $\pi$ restricts to a poset morphism $([1,g]_R,\le_R)\rightarrow([1,h]_T,\le_T)$. Let $\tilde{x},\tilde{y}\in [1,g]_R$ such that $\tilde{x}\le_R \tilde{y}$, and let $x\coloneqq\pi(\tilde{x})$ and $y\coloneqq\pi(\tilde{y})$. Let $k\coloneqq\len_R(\tilde{x})$ and $\ell\coloneqq\len_R(\tilde{y})$. Since $\tilde{x}\le_R\tilde{y}\le_R g$, we know $\len_R(\tilde{x}^{-1}\tilde{y})=\ell-k$ and $\len_R(\tilde{y}^{-1}g)=n-\ell$. Fix $(r_1,\dots,r_k)\in\Red_R(\tilde{x})$, $(r_{k+1},\dots,r_\ell)\in\Red_R(\tilde{x}^{-1}\tilde{y})$ and $(r_{\ell+1},\dots,r_n)\in\Red_R(\tilde{y}^{-1}g)$. We then have $(r_1,\dots,r_n)\in\Red_R(g)$. By Theorem \ref{hurwitz-fg}, there exists $\tau\in\Br_n$ such that $(r_1,\dots,r_n)=\tau\cdot (f_1,\dots,f_n)$. We then define $(t_1,\dots,t_n)=\tau\cdot (s_1,\dots,s_n)$, and notice that $x=t_1\dots t_k\le_T t_1\dots t_\ell=y$.

Finally, we shall fix $x,y\in [1,h]_T$ such that $x\le_T y$ and prove that there exist $\tilde{x},\tilde{y}\in [1,g]_R$ such that $\tilde{x}\le_R \tilde{y}$, $\pi(\tilde{x})=x$ and $\pi(\tilde{y})=y$. Let $k\coloneqq\len_T(x)$ and $\ell\coloneqq\len_T(y)$. Since $x\le_T y\le_T h$, we know $\len_R(x^{-1}y)=\ell-k$ and $\len_R(y^{-1}h)=n-\ell$. Fix $(t_1,\dots,t_k)\in\Red_T(x)$, $(t_{k+1},\dots,t_\ell)\in\Red_T(x^{-1}y)$ and $(t_{\ell+1},\dots,t_n)\in\Red_T(y^{-1}h)$. We then have $(t_1,\dots,t_n)\in\Red_T(h)$. By Theorem \ref{hurwitz-cox}, there exists $\tau\in\Br_n$ such that $(t_1,\dots,t_n)=\tau\cdot (s_1,\dots,s_n)$. We then define $(r_1,\dots,r_n)=\tau\cdot (f_1,\dots,f_n)$, $\tilde{x}\coloneqq r_1\dots r_k$ and $\tilde{y}=r_1\dots r_\ell$ and notice that they have the properties we desire.
\end{proof}

\subsection{The Hurwitz presentation for $\Art^*(W,T,h)$}
\label{sub-fr}
From the presentation of $\Art^*(W,T,h)$ given in Subsection \ref{artinduale}, we recall that $\Art^*(W,T,h)$ is generated by the elements of the form $\{t\}$, where $t\in T \cap [1,h]_T$. In this Subsection, we shall give another presentation for $\Art^*(W,T,h)$ that uses these generators and fewer relations. This is the presentation called \emph{Hurwitz presentation} in \cite{mccpres}. We will provide our own proof as we will later need some of its details for the proof of Proposition \ref{prop-isom-iff}.

First, we define the equivalence relation $\approx$ on $\Br_n$, such that $\tau\approx\tau'$ if and only if the last element of the $n$-tuple $\tau\cdot (s_1,\dots,s_n)$ is equal to the last element of the $n$-tuple $\tau'\cdot (s_1,\dots,s_n)$. We thus have a bijection $\varphi:\Br_n/\!\approx\;\rightarrow T\cap[1,h]_T$, such that $\varphi([\tau])$ is the last element of $\tau\cdot (s_1,\dots,s_n)$.

Then, for each class $c\in\Br_n/\!\approx$, we define a symbol $a_c$, and we define $\frak{F}$ as the free group freely generated by those symbols. For each $\tau \in \Br_n$ and for each $0 < i \le j < n$, we define the following elements of $\frak{F}$:
\[p_{\tau,j}\coloneqq a_{[\sigma_{n-1} \dots \sigma_{n-j} \tau]} \dots a_{[\sigma_{n-1} \sigma_{n-2} \tau]} a_{[\sigma_{n-1} \tau]}  a_{[\tau]}\]
\[r_{\tau,i,j}\coloneqq p_{\tau,j} p_{\sigma_{n-i} \tau,j}^{-1} \]
Finally, we define $\frak{R}$ as the smallest normal subgroup of $\frak{F}$ containing all elements of the form $r_{\tau,i,j}$.

\begin{prop}
Let $\xi:\frak{F}\rightarrow \Art^*(W,T,h)$ be the group morphism given by $a_c\mapsto{\varphi(c)}$. Then, $\xi$ is a surjective morphism, and $\Ker(\xi)=\frak{R}$. Therefore, $\Art^*(W,T,h)$ is isomorphic to $\frak{F}/\frak{R}$.
\label{prop-anotherpres}
\end{prop}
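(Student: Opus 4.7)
The plan is to prove the two inclusions $\frak{R}\subseteq\Ker(\xi)$ and $\Ker(\xi)\subseteq\frak{R}$ separately. Surjectivity of $\xi$ is immediate: since $\varphi$ is a bijection, every generator $\{t\}$ of $\Art^*(W,T,h)$ equals $\xi(a_{\varphi^{-1}(t)})$.

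For $\frak{R}\subseteq\Ker(\xi)$, I first compute $\xi(p_{\tau,j})$. Setting $(t_1,\dots,t_n):=\tau\cdot(s_1,\dots,s_n)$, a short induction on $\ell$ (checking how $\sigma_{n-\ell-1}$ permutes positions $n-\ell-1$ and $n-\ell$ under the Hurwitz action) shows that $\varphi([\sigma_{n-1}\sigma_{n-2}\cdots\sigma_{n-\ell}\tau])=t_{n-\ell}$; hence $\xi(p_{\tau,j})=\{t_{n-j}\}\{t_{n-j+1}\}\cdots\{t_n\}$. Because $T$ is closed under conjugation, the suffix $(t_{n-j},\dots,t_n)$ of a reduced $T$-word of $h$ is itself reduced, so the defining relations of $\Art^*(W,T,h)$ identify $\xi(p_{\tau,j})$ with $\{t_{n-j}\cdots t_n\}$. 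For $0<i\le j$, the Hurwitz move $\sigma_{n-i}$ acts strictly inside this suffix and preserves its product, so $\xi(p_{\sigma_{n-i}\tau,j})=\xi(p_{\tau,j})$ and $\xi(r_{\tau,i,j})=1$. Normality of $\Ker(\xi)$ in $\frak{F}$ yields $\frak{R}\subseteq\Ker(\xi)$.

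For the reverse inclusion I would construct an inverse $\eta:\Art^*(W,T,h)\to\frak{F}/\frak{R}$, defined on generators by $\{t\}\mapsto[a_{\varphi^{-1}(t)}]$. Well-definedness amounts to checking that every defining relation of $\Art^*(W,T,h)$ holds in $\frak{F}/\frak{R}$. Given $a\in[1,h]_T$ with $\len_T(a)=k$ and $(t_1,\dots,t_k),(t_1',\dots,t_k')\in\Red_T(a)$, I fix any $(u_1,\dots,u_{n-k})\in\Red_T(ha^{-1})$, so that both $(u_1,\dots,u_{n-k},t_1,\dots,t_k)$ and $(u_1,\dots,u_{n-k},t_1',\dots,t_k')$ lie in $\Red_T(h)$. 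By Theorem \ref{hurwitz-cox} they are connected by a sequence of elementary Hurwitz moves in $\Br_n$; each move $\sigma_{n-i}^{\pm 1}$ gives, via $r_{\gamma,i,n-1}$, the equivalence $[p_{\gamma,n-1}]=[p_{\sigma_{n-i}\gamma,n-1}]$ in $\frak{F}/\frak{R}$, and $p_{\gamma,n-1}$ depends as an element of $\frak{F}$ only on the tuple $\gamma\cdot(s_1,\dots,s_n)$. Concatenating these equivalences yields
\[a_{\varphi^{-1}(u_1)}\cdots a_{\varphi^{-1}(u_{n-k})}\,a_{\varphi^{-1}(t_1)}\cdots a_{\varphi^{-1}(t_k)}\equiv a_{\varphi^{-1}(u_1)}\cdots a_{\varphi^{-1}(u_{n-k})}\,a_{\varphi^{-1}(t_1')}\cdots a_{\varphi^{-1}(t_k')} \pmod{\frak{R}},\]
and left-cancellation inside the group $\frak{F}/\frak{R}$ removes the common prefix. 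Hence $\eta$ is well-defined, and $\eta\circ\xi$ agrees with the canonical projection $\frak{F}\twoheadrightarrow\frak{F}/\frak{R}$ on generators of $\frak{F}$, forcing $\Ker(\xi)\subseteq\frak{R}$.

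The main conceptual hurdle is the shared-prefix extension trick in the final step: by extending two potentially Hurwitz-inequivalent reduced $T$-words of $a$ with a \emph{common} prefix before invoking Hurwitz transitivity on $\Red_T(h)$, I can cancel the prefix inside the group $\frak{F}/\frak{R}$. This sidesteps the lack of a direct Hurwitz transitivity on $\Red_T(a)$, which would correspond to the pan-transitive condition and is not assumed here.
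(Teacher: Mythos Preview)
Your proof is correct and follows essentially the same path as the paper's. Both arguments reduce surjectivity to the bijectivity of $\varphi$, compute $\xi(p_{\tau,j})=\{t_{n-j}\}\cdots\{t_n\}$ to obtain $\frak{R}\subseteq\Ker(\xi)$, and for the reverse inclusion extend two reduced $T$-words of $a$ by a common prefix in $\Red_T(ha^{-1})$, invoke Hurwitz transitivity on $\Red_T(h)$, telescope along elementary moves using the relations $r_{\gamma,i,n-1}$, and then cancel the prefix via normality of $\frak{R}$. The only cosmetic differences are that you package the second inclusion as the well-definedness of an inverse map $\eta$ rather than checking relations directly, and you exploit the clean observation that $p_{\gamma,n-1}$ depends only on the tuple $\gamma\cdot(s_1,\dots,s_n)$, which the paper uses implicitly.
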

\begin{proof}
We have already noted that $\Art^*(W,T,h)$ is generated by the labels of the form $\{t\}$, where $t\in T \cap [1,h]_T$, and that $\varphi:\Br_n/\!\approx\;\rightarrow T\cap[1,h]_T$ is a bijection. This proves that $\xi$ is surjective. Assuming $\Ker(\xi)=\frak{R}$, the last sentence of the statement follows from the first isomorphism theorem.

We shall now prove $\Ker(\xi)\supseteq\frak{R}$. Let $\tau \in \Br_n$, $0 < i \le j < n$ and $(t_1, \dots, t_n) = \tau \cdot (s_1, \dots, s_n)$. We then have $\xi(p_{\tau, j}) =  \{t_{n-j}\}\{t_{n-j+1}\}\dots\{t_{n}\}$. From the definitions given in Subsection \ref{artinduale}, this yields $\xi(p_{\tau, j}) =  \{t_{n-j}t_{n-j+1}\dots t_{n}\}$. In the same way, one can prove:
\[\xi(p_{\sigma_{n-i}\tau, j})=\{t_{n-j}\dots t_{n-i-1}\cdot t_{n-i}t_{n-i+1}t_{n-i}^{-1}\cdot t_{n-i}\cdot t_{n-i+2}\dots t_n\}\]
Which yields $\xi(p_{\tau, j})=\xi(p_{\sigma_{n-i}\tau, j})$, and thus $\xi(r_{\tau,i,j})=1$. Given the definition of $\frak{R}$, this proves $\Ker(\xi)\supseteq\frak{R}$.

We shall now prove $\Ker(\xi)\subseteq\frak{R}$. Given the presentation of $\Art^*(W,T,h)$ from Subsection \ref{artinduale}, it is enough to fix $a\in [1,h]_T$, $j\coloneqq \len_T (a)-1$, and $(t_j,\dots,t_n),(t_j',\dots,t_n')\in\Red_T(a)$ and prove the following:
\begin{equation}
\left(a_{\varphi^{-1}(t_{n-j})}a_{\varphi^{-1}(t_{n-j+1})}\dots a_{\varphi^{-1}(t_n)}\right)\left(a_{\varphi^{-1}(t_{n-j}')}a_{\varphi^{-1}(t_{n-j+1}')}\dots a_{\varphi^{-1}(t_n')}\right)^{-1}\in\frak{R}
\label{eqn-altpres}
\end{equation}
Fix $(t_1,\dots,t_{n-j-1})\in\Red_T(ha^{-1})$. Using Theorem \ref{hurwitz-cox}, we fix $\alpha,\tau\in\Br_n$ such that:
\[\alpha\cdot (s_1,\dots,s_n)=(t_1,\dots,t_{n-j-1},t_{n-j},\dots,t_n)\]
\[\tau\cdot (t_1,\dots,t_{n-j-1},t_{n-j},\dots,t_n)=(t_1,\dots,t_{n-j-1},t_{n-j}',\dots,t_n')\]
So \eqref{eqn-altpres} is equivalent to $p_{\alpha,j}p_{\tau\alpha,j}^{-1}\in\frak{R}$. We also have:
\[p_{\alpha,n-1}p_{\tau\alpha,n-1}^{-1}=\left(a_{\varphi^{-1}(t_{1})}a_{\varphi^{-1}(t_{2})}\dots a_{\varphi^{-1}(t_{n-j-1})}\right)p_{\alpha,j}p_{\tau\alpha,j}^{-1}\left(a_{\varphi^{-1}(t_{1})}a_{\varphi^{-1}(t_{2})}\dots a_{\varphi^{-1}(t_{n-j-1})}\right)^{-1}\]
Therefore, since $\frak{R}$ is a normal subgroup of $\frak{F}$, \eqref{eqn-altpres} is equivalent to $p_{\alpha,n-1}p_{\tau\alpha,n-1}^{-1}\in\frak{R}$.

Let $(\beta_1,\dots,\beta_k)$ be a $\{\sigma_1,\dots,\sigma_{n-1}\}$-word for $\tau$, and for each $i\in\{0,\dots,k\}$ let $\tau_i\coloneqq\beta_{k-i+1}\beta_{k-i+2}\dots\beta_k$. We have:
\[\begin{split}
p_{\alpha,n-1}p_{\tau\alpha,n-1}^{-1}=&p_{\tau_0\alpha,n-1}p_{\tau_k\alpha,n-1}^{-1}\\
=&\left(p_{\tau_0\alpha,n-1}p_{\tau_1\alpha,n-1}^{-1}\right)\left(p_{\tau_1\alpha,n-1}p_{\tau_2\alpha,n-1}^{-1}\right)\dots\left(p_{\tau_{k-1}\alpha,n-1}p_{\tau_k\alpha,n-1}^{-1}\right)
\end{split}\]
Fix $\ell\in\{0,\dots,k-1\}$. We have that $\beta_{n-\ell}$ is either equal to $\sigma_{n-i}$ or to $\sigma_{n-i}^{-1}$ for some $i$. In the former case, we have $p_{\tau_{k-\ell-1}\alpha,n-1}p_{\tau_{k-\ell}\alpha,n-1}^{-1}=r_{\tau_{k-i}\alpha,i,n-1}^{-1}$. In the latter, we have $p_{\tau_{k-\ell-1}\alpha,n-1}p_{\tau_{k-\ell}\alpha,n-1}^{-1}=r_{\tau_{k-i-1}\alpha,i,n-1}$. Either way, we have $p_{\tau_{k-\ell-1}\alpha,n-1}p_{\tau_{k-\ell}\alpha,n-1}^{-1}\in\frak{R}$. This proves $p_{\alpha,n-1}p_{\tau\alpha,n-1}^{-1}\in\frak{R}$, and thus the statement.
\end{proof}
For the remainder of Section \ref{sect-morph}, we shall identify $\Art^*(W,T,h)$ with $\frak{F}/\frak{R}$ via the isomorphism provided by Proposition \ref{prop-anotherpres}. With this identification, we shall now re-write the relations $r_{\tau,i,j}$ in a simpler way, obtaining the Hurwitz presentation.

\begin{cor}
The group $\Art^*(W,T,h)$ is generated by the labels of $[1,h]_T$, with the following relations:
\[\{t\}\{t'\}=\{t\, t' t\}\{t\}\quad\forall t,t'\in T\text{ such that }t\, t'\in [1,h]_T\]
\end{cor}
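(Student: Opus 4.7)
The plan is to leverage Proposition \ref{prop-anotherpres} and reduce the corollary to an equality of normal subgroups of $\frak{F}$. By that proposition, $\Art^*(W,T,h)\cong\frak{F}/\frak{R}$, and via the bijection $\varphi$ the free generators $a_c$ of $\frak{F}$ are identified with the labels $\{t\}$ for $t\in T\cap[1,h]_T$. Let $\frak{R}_H\triangleleft\frak{F}$ denote the normal closure of all elements of the form $\{t\}\{t'\}\{t\}^{-1}\{tt't\}^{-1}$ with $t,t'\in T$ and $tt'\in[1,h]_T$; the corollary is then equivalent to $\frak{R}=\frak{R}_H$, and I will prove both inclusions.

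For $\frak{R}_H\subseteq\frak{R}$, I fix such a Hurwitz relator. If $t=t'$ it is already trivial, so I assume $t\neq t'$, whence $(t,t')\in\Red_T(tt')$. Since $tt'\in[1,h]_T$ and $T$ is closed under conjugation, the suffix form of $\leq_T$ allows me to extend $(t,t')$ to a reduced word $(t_1,\dots,t_{n-2},t,t')\in\Red_T(h)$. By Theorem \ref{hurwitz-cox} there is $\tau\in\Br_n$ with $\tau\cdot(s_1,\dots,s_n)=(t_1,\dots,t_{n-2},t,t')$; computing the two iterations of $\sigma_{n-1}$ needed to evaluate the generators appearing in $r_{\tau,1,1}$ and using $t^{-1}=t$ for reflections, a direct expansion gives $r_{\tau,1,1}=\{t\}\{t'\}\{t\}^{-1}\{tt't\}^{-1}$, the Hurwitz relator itself.

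For $\frak{R}\subseteq\frak{R}_H$, I fix $0<i\leq j<n$ and $\tau\in\Br_n$, set $(t_1,\dots,t_n):=\tau\cdot(s_1,\dots,s_n)$ and $(t_1',\dots,t_n'):=\sigma_{n-i}\cdot(t_1,\dots,t_n)$, and abbreviate $u_k:=a_{[\sigma_{n-1}\cdots\sigma_{n-k}\tau]}$, $v_k:=a_{[\sigma_{n-1}\cdots\sigma_{n-k}\sigma_{n-i}\tau]}$ for the generators appearing in $p_{\tau,j}$ and $p_{\sigma_{n-i}\tau,j}$. The crucial observation is that $u_k=v_k$ in $\frak{F}$ iff the two $\approx$-classes coincide, iff $t_{n-k}=t_{n-k}'$, which holds exactly for $k\notin\{i-1,i\}$. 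Expanding $r_{\tau,i,j}=p_{\tau,j}p_{\sigma_{n-i}\tau,j}^{-1}$ as a word in the free generators and canceling outside the active indices yields
\[r_{\tau,i,j}=(u_j\cdots u_{i+1})\,(u_i\,u_{i-1}\,v_{i-1}^{-1}\,v_i^{-1})\,(u_{i+1}^{-1}\cdots u_j^{-1}),\]
exhibiting $r_{\tau,i,j}$ as a conjugate in $\frak{F}$ of the \emph{core relator} $u_i u_{i-1} v_{i-1}^{-1} v_i^{-1}$. Via $\varphi$ this core translates to $\{t_{n-i}\}\{t_{n-i+1}\}\{t_{n-i}\}^{-1}\{t_{n-i}t_{n-i+1}t_{n-i}\}^{-1}$, the Hurwitz relator for the pair $(t_{n-i},t_{n-i+1})$; it lies in $\frak{R}_H$ provided $t_{n-i}t_{n-i+1}\in[1,h]_T$. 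This follows because $(t_{n-i},\dots,t_n)$ is a suffix of the reduced word $(t_1,\dots,t_n)\in\Red_T(h)$, so $t_{n-i}\cdots t_n\leq_T h$, and $(t_{n-i},t_{n-i+1})$ is a prefix of a reduced word for $t_{n-i}\cdots t_n$, giving $t_{n-i}t_{n-i+1}\leq_T h$.

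I expect the main obstacle to lie in the free-group calculation of the second step, where one must carefully determine which generators $u_k$ and $v_k$ literally coincide in $\frak{F}$ and verify the edge cases $i=1$ (where the right-hand cancellation $(u_{i-2}\cdots u_0)(u_0^{-1}\cdots u_{i-2}^{-1})$ is vacuous) and $i=j$ (where the outer conjugating word is empty). A secondary subtlety is the proof that $t_{n-i}t_{n-i+1}\in[1,h]_T$, which combines both the prefix and suffix characterizations of $\leq_T$, both valid because $T$ is conjugation-closed.
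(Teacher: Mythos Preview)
Your proof is correct and follows essentially the same approach as the paper: both arguments show that each relator $r_{\tau,i,j}$, after cancelling the common prefix $u_j\cdots u_{i+1}$ and the common suffix $u_{i-2}\cdots u_0$, reduces to the Hurwitz relator $\{t_{n-i}\}\{t_{n-i+1}\}\{t_{n-i}\}^{-1}\{t_{n-i}t_{n-i+1}t_{n-i}\}^{-1}$. Your write-up is in fact more explicit than the paper's --- you work directly in $\frak{F}$ and prove both inclusions $\frak{R}_H\subseteq\frak{R}$ and $\frak{R}\subseteq\frak{R}_H$, whereas the paper handles the second via the same cancellation (phrased as ``rewriting the relation'') and dismisses the first with ``From this the statement follows easily.''
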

\begin{proof}
Fix $\tau\in\Br_n$ and $0 < i \le j < n$, and let $(t_1,\dots, t_n)\coloneqq \tau\cdot (s_1,\dots, s_n)$. We then have:
\[\frak{F}/\frak{R}\ni [p_{\tau,j}]=\{t_{n-j}\}\{t_{n-j+1}\}\dots\{t_{n}\}\in\Art^*(W,T,h)\]
\[\frak{F}/\frak{R}\ni [p_{\sigma_{n-i}\tau,j}]=\{t_{n-j}\}\dots\{t_{n-i-1}\}\{t_{n-i}t_{n-i+1}t_{n-i}\}\{t_{n-i}\}\{t_{n-i+2}\}\dots\{t_{n}\}\in\Art^*(W,T,h)\]
So the relation $r_{\tau,i,j}$ can be rewritten as:
\[\begin{split}
\{t_{n-j}\}\{&t_{n-j+1}\}\dots\{t_{n}\}\\
&=\{t_{n-j}\}\dots\{t_{n-i-1}\}\{t_{n-i}t_{n-i+1}t_{n-i}\}\{t_{n-i}\}\{t_{n-i+2}\}\dots\{t_{n}\}
\end{split}\]
Which, multiplying both sides by $(\{t_{n-j}\}\dots\{t_{n-i-1}\})^{-1}$ on the left and $(\{t_{n-i+2}\}\dots\{t_{n}\})^{-1}$ on the right, is equivalent to:
\[\{t_{n-i}\}\{t_{n-i+1}\}=\{t_{n-i}t_{n-i+1}t_{n-i}\}\{t_{n-i}\}\]
From this the statement follows easily.
\end{proof}

\subsection{Definition of the morphism $\Psi$}
In this Subsection, we shall define a surjective group morphism $\Psi$ from $\Art(W,S)$ to $\Art^*(W,T,h)$.
This morphism was already defined in the spherical \cite{bessis2002dual} and affine \cite{mccammond2017artin} cases, and defined independently in the general case by Paris \cite{parisnotes}.

Before defining that morphism, we shall notice that the Hurwitz presentation of $\Art^*(W,T,h)$ yields the following lemma:
\begin{lemma}
Let $a\in [1,h]_T$, $k\coloneqq\len_T(a)$, $(t_1,\dots,t_k)\in\Red_T(a)$, $\tau\in\Br_k$ and $(t_1',\dots,t_k')\coloneqq\tau\cdot (t_1,\dots,t_k)$. Then, using the notation from Subsection \ref{artinduale}, we have $(\{t_1'\},\dots,\{t_k'\})\coloneqq\tau\cdot (\{t_1\},\dots,\{t_k\})$.
\label{hurwitz-duale}
\end{lemma}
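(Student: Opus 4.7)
The plan is to reduce to the base case $\tau = \sigma_i^{\pm 1}$ and then induct on the word length of $\tau$ as a product of the generators $\sigma_1^{\pm 1},\dots,\sigma_{k-1}^{\pm 1}$ of $\Br_k$. The inductive step is automatic once the base case is in hand, because the Hurwitz action preserves $\Red_T(a)$: after applying one $\sigma_j^{\pm 1}$, the new tuple of reflections is again a reduced $T$-word for $a$, so the base case applies again to the corresponding tuple of generators of $\Art^*(W,T,h)$.

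For $\tau=\sigma_i$, the definition of the Hurwitz action on $\Art^*(W,T,h)^{k}$ gives that the $(i,i+1)$-th coordinates of $\sigma_i\cdot(\{t_1\},\dots,\{t_k\})$ are $(\{t_i\}\{t_{i+1}\}\{t_i\}^{-1},\{t_i\})$, and since $\sigma_i\cdot(t_1,\dots,t_k)$ has $(i,i+1)$-th coordinates $(t_it_{i+1}t_i^{-1},t_i)$, the whole statement reduces to the single identity $\{t_i\}\{t_{i+1}\} = \{t_it_{i+1}t_i\}\{t_i\}$ in $\Art^*(W,T,h)$ (using $t_i^{-1}=t_i$). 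This is precisely the Hurwitz presentation relation from the corollary of Proposition \ref{prop-anotherpres}, provided that $t_it_{i+1}\in[1,h]_T$. The case $\tau=\sigma_i^{-1}$ is analogous: the required identity becomes $\{t_i\}\{t_{i+1}\} = \{t_{i+1}\}\{t_{i+1}t_it_{i+1}\}$, and substituting $t\mapsto t_{i+1}$, $t'\mapsto t_{i+1}t_it_{i+1}$ in the Hurwitz presentation relation yields exactly this, since $t_{i+1}\cdot(t_{i+1}t_it_{i+1})=t_it_{i+1}$ so the same hypothesis suffices.

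The one point that needs care, and is the main thing to verify, is the hypothesis $t_it_{i+1}\in[1,h]_T$. For this I would argue as follows. The prefix $(t_1,\dots,t_{i+1})$ of a reduced $T$-word is itself reduced, so its product $b\coloneqq t_1\cdots t_{i+1}$ has $\len_T(b)=i+1$ and satisfies $b\le_T a\le_T h$. Now $(t_1,\dots,t_{i-1},t_i,t_{i+1})\in\Red_T(b)$ exhibits $t_it_{i+1}$ as the product of a length-$2$ suffix of a reduced $T$-word for $b$; since $T$ is closed under conjugation, the suffix characterization of $\le_T$ recalled in Subsection 2.2 gives $t_it_{i+1}\le_T b\le_T h$, as required. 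With this in place, both base cases go through and induction finishes the proof.
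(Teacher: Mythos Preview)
Your proposal is correct and follows exactly the approach the paper has in mind: the paper states this lemma immediately after the Hurwitz presentation and simply remarks that it ``yields'' the lemma, without spelling out any details. Your reduction to the base cases $\tau=\sigma_i^{\pm1}$, the identification of the needed identity with the Hurwitz relation $\{t\}\{t'\}=\{tt't\}\{t\}$, and the verification that $t_it_{i+1}\in[1,h]_T$ via the suffix characterisation of $\le_T$ are precisely the steps one would expect; the induction on word length is routine.
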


Which itself yields:

\begin{cor}
$\Art^*(W,T,h)$ is generated by $\{s_1\},\{s_2\},\dots,\{s_n\}$.
\label{dual-gen-s}
\end{cor}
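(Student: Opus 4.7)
The plan is to show that every generator $\{t\}$ with $t \in T \cap [1,h]_T$ lies in the subgroup $H$ generated by $\{s_1\}, \ldots, \{s_n\}$. Since $\Art^*(W,T,h)$ is generated by all such labels $\{t\}$, this will suffice.

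First, I would fix $t \in T \cap [1,h]_T$. Since $t \le_T h$, there is a reduced $T$-word of $h$ beginning with $t$, that is, some $(t, t_2, \ldots, t_n) \in \Red_T(h)$. By Theorem \ref{hurwitz-cox}, the Hurwitz action of $\Br_n$ on $\Red_T(h)$ is transitive, so I can pick $\tau \in \Br_n$ with $\tau \cdot (s_1, \ldots, s_n) = (t, t_2, \ldots, t_n)$.

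Applying Lemma \ref{hurwitz-duale} to this $\tau$ gives $\tau \cdot (\{s_1\}, \ldots, \{s_n\}) = (\{t\}, \{t_2\}, \ldots, \{t_n\})$ as tuples in $\Art^*(W,T,h)^n$. The Hurwitz action replaces entries of a tuple only by conjugates formed out of other entries, so every coordinate of the right-hand tuple lies in $H$; in particular $\{t\} \in H$, which proves the statement. There is no serious obstacle here: the argument is essentially a direct combination of the Hurwitz-transitivity theorem on $\Red_T(h)$, which lets one reach $t$ starting from $(s_1, \dots, s_n)$, with Lemma \ref{hurwitz-duale}, which transports that transitivity from the $T$-side to the label-side.
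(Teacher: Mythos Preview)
Your proof is correct and follows essentially the same approach as the paper's: both extend $t$ to a reduced $T$-word for $h$, invoke Hurwitz transitivity to reach it from $(s_1,\dots,s_n)$, and then apply Lemma~\ref{hurwitz-duale} to transport this to the dual Artin group. The only cosmetic difference is that the paper places $t$ as the last entry of the tuple (completing $(t_1,\dots,t_{n-1})\in\Red_T(ht^{-1})$ to $(t_1,\dots,t_{n-1},t)$), whereas you place it first.
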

\begin{proof}
From Subsection \ref{artinduale}, we know that $\Art^*(W,T,h)$ is generated by elements of the form $\{t\}$, where $t\in T \cap [1,h]_T$. Fix one such $t$, and fix $(t_1,\dots,t_{n-1})\in\Red_T(ht^{-1})$. By Theorem \ref{hurwitz-cox}, there exists $\tau\in\Br_n$ such that $\tau\cdot (s_1,\dots,s_n)=(t_1,\dots,t_{n-1},t)$. From Lemma \ref{hurwitz-duale} we then know that $\{t\}$ is the last element of $\tau\cdot (\{s_1\},\dots,\{s_n\})$, and thus belongs to the group generated by $\{s_1\},\{s_2\},\dots,\{s_n\}$. This proves the statement.
\end{proof}

We now get to the definition of $\Psi$:

\begin{prop}
\label{prop-def-psi}
The group morphism $\Psi:\Art(W,S)\rightarrow \Art^*(W,T,h)$ given by $\overline{s}_i\mapsto\{s_i\}$ is well-defined and surjective.
\end{prop}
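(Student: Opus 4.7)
Plan. The surjectivity of $\Psi$ is immediate from Corollary \ref{dual-gen-s}: once $\Psi$ is defined, its image contains $\{s_1\},\dots,\{s_n\}$, which already generate $\Art^*(W,T,h)$. The real work is to verify that $\Psi$ respects the defining relations of $\Art(W,S)$, i.e.\ that for every $i<j$ with $m\coloneqq m(s_i,s_j)<\infty$ the identity $[\{s_i\}\,|\,\{s_j\}]_m=[\{s_j\}\,|\,\{s_i\}]_m$ holds in $\Art^*(W,T,h)$.

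My approach is to deduce each such braid relation from Lemma \ref{hurwitz-duale} applied to the pair $(s_i,s_j)\in\Red_T(s_is_j)$. The first step is therefore to establish $s_is_j\in[1,h]_T$. I would argue this via Proposition \ref{poset-morph-free-cox}: since $\pi$ restricts to a surjective poset morphism $[1,g]_R\to[1,h]_T$, it suffices to prove $f_if_j\le_R g$ in the free group. This follows from the transitivity of the Hurwitz action on $\Red_R(g)$ (Theorem \ref{hurwitz-fg}), by explicitly producing a sequence of Hurwitz moves carrying $(f_1,\dots,f_n)$ to a tuple of the form $(f_i,f_j,\ast,\dots,\ast)$; projecting via $\pi$ then gives $s_is_j\le_T h$.

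With $s_is_j\in[1,h]_T$ secured, Lemma \ref{hurwitz-duale} applies to $\Br_2$ acting on $(\{s_i\},\{s_j\})$ in $\Art^*(W,T,h)$. Inside $W$, the Hurwitz orbit of $(s_i,s_j)$ lives in the dihedral parabolic $\langle s_i,s_j\rangle\cong I_2(m)$, where the reduced $T$-words of the Coxeter element form a single orbit of size exactly $m$; hence $\sigma_1^m\cdot(s_i,s_j)=(s_i,s_j)$, and transporting this across the lemma yields $\sigma_1^m\cdot(\{s_i\},\{s_j\})=(\{s_i\},\{s_j\})$ in $\Art^*(W,T,h)$. It then remains to check formally that, for any pair $(\alpha,\beta)$ in an abstract group, $\sigma_1^m$ stabilizes $(\alpha,\beta)$ under the Hurwitz action if and only if $[\alpha\,|\,\beta]_m=[\beta\,|\,\alpha]_m$: setting $c=\alpha\beta$ and $a_0=\alpha$, $a_{k+1}=ca_k^{-1}$, a routine induction gives $a_{2k}=c^k\alpha c^{-k}$ and $a_{2k+1}=c^{k+1}\alpha^{-1}c^{-k}$, and in either parity the equation $a_m=\alpha$ unwinds to exactly the $m$-fold braid relation (the second coordinate $a_{m-1}=\beta$ then comes for free from the Hurwitz invariant $a_ka_{k-1}=c$). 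Taking $\alpha=\{s_i\}$ and $\beta=\{s_j\}$ delivers the required braid relation in $\Art^*(W,T,h)$, and so $\Psi$ is well-defined.

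The main obstacle is the first step, namely establishing $s_is_j\in[1,h]_T$ for every relevant pair; once this is in hand, the remainder is a formal translation between Hurwitz stabilizers and braid relations. Passing through the free group via Proposition \ref{poset-morph-free-cox} seems cleanest, since in $F_n$ the existence of a reduced $R$-factorization of $g$ beginning with $(f_i,f_j)$ is easy to produce by successive Hurwitz moves, whereas working directly in $W$ would force one to exhibit an element of $\Red_T(h)$ with prefix $(s_i,s_j)$ by the less transparent action on $(s_1,\dots,s_n)$.
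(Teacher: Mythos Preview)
Your argument is correct and rests on the same idea as the paper's proof: both use that $s_is_j\in[1,h]_T$ together with the dihedral periodicity $\sigma_1^m\cdot(s_i,s_j)=(s_i,s_j)$. The paper unpacks this explicitly by setting $x_z$ equal to the second entry of $\sigma_1^z\cdot(s_i,s_j)$ and rewriting each side of the braid relation as the product $\{x_m\}\{x_{m-1}\}\cdots\{x_1\}$ via the identity $\{x_{z+1}\}\{x_z\}=\{s_is_j\}$; you instead invoke Lemma~\ref{hurwitz-duale} to transport the periodicity to $(\{s_i\},\{s_j\})$ and then translate Hurwitz $\sigma_1^m$-stability into the braid relation, which is the same computation abstracted one level up. One minor point: your detour through $F_n$ and Proposition~\ref{poset-morph-free-cox} to secure $s_is_j\le_T h$ is unnecessary, since the very sequence of Hurwitz moves you propose on $(f_1,\dots,f_n)$ applies verbatim to $(s_1,\dots,s_n)$ in $W$ (the paper, for its part, uses $s_is_j\in[1,h]_T$ without comment).
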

\begin{proof}
Recalling the presentation for $\Art(W,S)$ given in Subsection \ref{cox-bg}, to prove that $\Psi$ is well-defined it is enough to fix $i<j$ such that the order $m$ of $s_is_j$ is finite and prove the following equality between elements of $\Art^*(W,T,h)$:
\begin{equation}
\left[\{s_i\}|\{s_j\}\right]_m=\left[\{s_j\}|\{s_i\}\right]_m
\label{eqn-Psi-1}
\end{equation}
For each $z\in\Z$, let $x_z$ be the second element of the couple $\sigma_1^z\cdot (s_i,s_j)$. Therefore, we have $x_0=x_m=s_j$, $x_1=s_i$, and for each $z\in\Z$ we have $x_{z+1}x_z=s_is_j$. Assuming that $m$ is even, we rewrite \eqref{eqn-Psi-1} as follows:
\[(\{s_i\}\{s_j\})^{\frac m2}=\{s_j\}(\{s_i\}\{s_j\})^{\frac {m-2}2}\{s_i\}\]
\[\{s_is_j\}^{\frac m2}=\{s_j\}\{s_is_j\}^{\frac {m-2}2}\{s_i\}\]
\[\begin{split}
\{x_m\}\{x_{m-1}\}\cdot\{x_{m-2}&\}\{x_{m-3}\}\cdot\ldots\cdot\{x_2\}\{x_1\}=\\
&\{x_m\}\cdot\{x_{m-1}\}\{x_{m-2}\}\cdot\{x_{m-3}\}\{x_{m-4}\}\cdot\ldots\cdot\{x_3\}\{x_2\}\cdot\{x_1\}
\end{split}\]
With the last equality being evidently true, proving \eqref{eqn-Psi-1}. Assuming that $m$ is odd, we rewrite \eqref{eqn-Psi-1} as follows:
\[(\{s_i\}\{s_j\})^{\frac {m-1}2}\{s_i\}=\{s_j\}(\{s_i\}\{s_j\})^{\frac {m-1}2}\]
\[\{s_is_j\}^{\frac {m-1}2}\{s_i\}=\{s_j\}\{s_is_j\}^{\frac {m-1}2}\]
\[\begin{split}
\{x_m\}\{x_{m-1}\}\cdot\{x_{m-2}&\}\{x_{m-3}\}\cdot\ldots\cdot\{x_3\}\{x_2\}\cdot\{x_1\}=\\
&\{x_m\}\cdot\{x_{m-1}\}\{x_{m-2}\}\cdot\{x_{m-3}\}\{x_{m-4}\}\cdot\ldots\cdot\{x_2\}\{x_1\}
\end{split}\]
With the last equality being evidently true, proving \eqref{eqn-Psi-1}. This proves that $\Psi$ is a well-defined morphism. It is surjective by Corollary \ref{dual-gen-s}.
\end{proof}

\subsection{Conditions related to $\Psi$ being an isomorphism}
In this Subsection we shall give conditions that are sufficient, necessary or equivalent to the morphism $\Psi$ from Proposition \ref{prop-def-psi} being an isomorphism.

We start by defining an equivalence relation on $\Br_n$, similar to the relation $\approx$ from Subsection \ref{sub-fr}. We say that $\tau\dapprox\tau'$ if and only if the last element of the $n$-tuple $\tau\cdot (\overline{s}_1,\dots,\overline{s}_n)$ is equal to the last element of the $n$-tuple $\tau'\cdot (\overline{s}_1,\dots,\overline{s}_n)$, where $\overline{s}_1,\dots,\overline{s}_n$ are the generators of $\Art(W,S)$ from Subsection \ref{cox-bg}. Notice that, since $\Art(W,S)$ projects onto $W$, we have that if $\tau,\tau'\in\Br_n$ satisfy $\tau\dapprox\tau'$ then they also satisfy $\tau\approx\tau'$.

\begin{prop}
$\Psi$ is an isomorphism if and only if the relations $\approx$ and $\dot\approx$ are equal.
\label{prop-isom-iff}
\end{prop}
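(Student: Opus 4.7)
The plan is to prove both directions by lifting the Hurwitz-presentation argument from Proposition \ref{prop-anotherpres} into $\Art(W,S)$, using the well-definedness condition exactly where we previously used the definition of $\approx$.

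\textbf{Direction $(\Rightarrow)$.} The observation that $\dapprox$ implies $\approx$ is already given in the paragraph preceding the statement, so I only need the reverse implication under the assumption that $\Psi$ is injective. Fix $\tau,\tau'\in\Br_n$ with $\tau\approx\tau'$, and let $(\overline{t}_1,\dots,\overline{t}_n):=\tau\cdot(\overline{s}_1,\dots,\overline{s}_n)$ and $(\overline{t}_1',\dots,\overline{t}_n'):=\tau'\cdot(\overline{s}_1,\dots,\overline{s}_n)$ in $\Art(W,S)$. Since $\Psi$ is a group morphism sending $\overline{s}_i$ to $\{s_i\}$, it commutes with the Hurwitz action, so $\Psi(\overline{t}_n)$ is the last entry of $\tau\cdot(\{s_1\},\dots,\{s_n\})$ and similarly for $\tau'$. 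By Lemma \ref{hurwitz-duale}, both of these equal $\{\varphi([\tau])\}=\{\varphi([\tau'])\}$, so $\Psi(\overline{t}_n)=\Psi(\overline{t}_n')$. Injectivity of $\Psi$ forces $\overline{t}_n=\overline{t}_n'$, i.e.\ $\tau\dapprox\tau'$.

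\textbf{Direction $(\Leftarrow)$.} Assuming $\approx\,=\,\dapprox$, the plan is to construct an inverse $\overline{\Phi}:\Art^*(W,T,h)\to\Art(W,S)$ to $\Psi$ by exploiting the presentation $\mathfrak{F}/\mathfrak{R}$ from Proposition \ref{prop-anotherpres}. For each class $c\in\Br_n/\!\approx\,=\,\Br_n/\!\dapprox$ pick any representative $\tau$ and set
\[\Phi(a_c):=\text{the last entry of }\tau\cdot(\overline{s}_1,\dots,\overline{s}_n);\]
equality of $\approx$ and $\dapprox$ is precisely what makes this independent of the chosen representative, so $\Phi:\mathfrak{F}\to\Art(W,S)$ is a well-defined group morphism.

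\textbf{Main obstacle: $\mathfrak{R}\subseteq\Ker(\Phi)$.} This is the one real computation, and it mimics step by step the proof of Proposition \ref{prop-anotherpres} but in $\Art(W,S)$ rather than $\Art^*(W,T,h)$. Writing $(\overline{t}_1,\dots,\overline{t}_n)=\tau\cdot(\overline{s}_1,\dots,\overline{s}_n)$, the same braid-word analysis that was carried out for $\xi(p_{\tau,j})$ shows
\[\Phi(p_{\tau,j})=\overline{t}_{n-j}\overline{t}_{n-j+1}\dots\overline{t}_n,\]
and applying $\sigma_{n-i}$ first and then the same formula gives
\[\Phi(p_{\sigma_{n-i}\tau,j})=\overline{t}_{n-j}\dots\overline{t}_{n-i-1}(\overline{t}_{n-i}\overline{t}_{n-i+1}\overline{t}_{n-i}^{-1})\overline{t}_{n-i}\overline{t}_{n-i+2}\dots\overline{t}_n,\]
which collapses to the same product. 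Hence $\Phi(r_{\tau,i,j})=1$, and since $\mathfrak{R}$ is the normal closure of these elements we get $\mathfrak{R}\subseteq\Ker(\Phi)$. This induces $\overline{\Phi}:\Art^*(W,T,h)\to\Art(W,S)$.

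\textbf{Conclusion.} It remains to check $\overline{\Phi}\circ\Psi=\id$. Since $\Psi$ is surjective and both maps are group morphisms, it suffices to verify this on the generators $\overline{s}_i$. For each $i$, the braid $\tau_i:=\sigma_{n-1}\sigma_{n-2}\dots\sigma_i$ sends $(s_1,\dots,s_n)$ under the Hurwitz action to an $n$-tuple whose last entry is $s_i$, and an identical computation in $\Art(W,S)$ sends $(\overline{s}_1,\dots,\overline{s}_n)$ to an $n$-tuple whose last entry is $\overline{s}_i$. Therefore $\{s_i\}=\xi(a_{[\tau_i]})$ and $\overline{\Phi}(\{s_i\})=\Phi(a_{[\tau_i]})=\overline{s}_i$, so $\overline{\Phi}\circ\Psi=\id$. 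Combined with the surjectivity of $\Psi$ from Proposition \ref{prop-def-psi}, this proves $\Psi$ is an isomorphism.
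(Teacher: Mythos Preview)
Your proof is correct and follows essentially the same approach as the paper: both directions match the paper's argument almost line by line, constructing the inverse via the Hurwitz presentation $\mathfrak{F}/\mathfrak{R}$ and verifying $\Phi(r_{\tau,i,j})=1$ by the same explicit computation, then checking $\overline{\Phi}\circ\Psi=\id$ on generators using the braid $\sigma_{n-1}\cdots\sigma_i$. The only minor quibble is that surjectivity of $\Psi$ is not needed to reduce $\overline{\Phi}\circ\Psi=\id$ to a check on generators of $\Art(W,S)$; it is only used at the very end, as you correctly do.
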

\begin{proof}
Assume that $\Psi$ is an isomorphism, and fix $\tau,\tau'\in\Br_n$ such that $\tau\approx\tau'$. Let $a,b$ be the last elements of the $n$-tuples $\tau\cdot (\overline{s}_1,\dots,\overline{s}_n)$, $\tau'\cdot (\overline{s}_1,\dots,\overline{s}_n)$ respectively. From $\tau\approx\tau'$ we have $\Psi(a)=\Psi(b)$, and since $\Psi$ is injective we have $a=b$, which proves $\tau\dapprox\tau'$. This proves that, if $\Psi$ is an isomorphism, then the relations $\approx$ and $\dot\approx$ are equal.

Assume that the relations $\approx$ and $\dot\approx$ are equal. This means that there is a well-defined group morphism $\Phi:\frak{F}\rightarrow \Art(W,S)$ given by $a_{[\tau]}\mapsto\overline{t}$, where $\overline{t}$ is the last element of the $n$-tuple $\tau\cdot (\overline{s}_1,\dots,\overline{s}_n)$. We shall now show $\frak{R}\subseteq\Ker(\Phi)$, from which we know that $\Phi$ induces a morphism $\overline{\Phi}:\frak{F}/\frak{R}\rightarrow \Art(W,S)$.

Fix $\tau \in \Br_n$, $0 < i \le j < n$, and $(\overline{t}_1, \dots, \overline{t}_n) \coloneqq \tau \cdot (\overline{s}_1, \dots, \overline{s}_n)$. We have:
\[\begin{split}
\Phi(r_{\tau,i,j}) = &\, \Phi(p_{\tau,j} p_{\sigma_{n-i} \tau,j}^{-1}) \\ 
= &\, \Phi\!\left(a_{[\sigma_{n-1} \dots \sigma_{n-j} \tau]} \dots  a_{[\sigma_{n-1} \tau]} a_{[\tau]} \cdot
a_{[\sigma_{n-i} \tau]}^{-1} a_{[\sigma_{n-1} \sigma_{n-i} \tau]}^{-1} \dots a_{[\sigma_{n-1} \dots \sigma_{n-j} \sigma_{n-i} \tau]}^{-1}
\right)\\
= &\, \overline{t}_{n-j} \dots \overline{t}_n \cdot \overline{t}_n^{-1} \dots \overline{t}_{n-i+2}^{-1} \overline{t}_{n-i}^{-1} \left(\overline{t}_{n-i}\overline{t}_{n-i+1}\overline{t}_{n-i}^{-1}\right)^{-1} \overline{t}_{n-i-1}^{-1} \dots \overline{t}_{n-j}^{-1} = 1
\end{split}\]
This proves $r_{\tau,i,j}\in\Ker(\Phi)$, and therefore $\frak{R}\subseteq\Ker(\Phi)$. We shall now show that $\Psi$ is an isomorphism. For each $i \in \{ 1, \dots, n\}$, we have:
\[\overline{\Phi}(\Psi(\overline{s}_i))=\overline{\Phi}([a_{\phi^{-1}(s_i)}])=\Phi(a_{[\sigma_{n-1} \dots \sigma_i]})=\overline{s}_i\]
This proves that $\overline{\Phi} \circ \Psi=\id_{\Art(W,S)}$, so $\Psi$ is injective. We already knew from Proposition \ref{prop-def-psi} that $\Psi$ is surjective, thus it is an isomorphism.
\end{proof}

We now define the subgroups $H(W,(s_1,\dots,s_n)),\overline{H}(W,(s_1,\dots,s_n))$ of $\Br_n$ as the stabilizers with respect to the Hurwitz action of $(s_1,\dots,s_n)$ and $(\overline{s}_1,\dots,\overline{s}_n)$ respectively. For the remainder of Section \ref{sect-morph}, since $W$ and $(s_1,\dots,s_n)$ will remain fixed, we shall denote those groups simply as $H$ and $\overline{H}$. Notice that, since $\Art(W,S)$ projects onto $W$, $\overline{H}$ is always a subgroup of $H$. We say that $(W,(s_1,\dots,s_n))$ is \emph{well-stabilized} if $H=\overline{H}$.

\begin{prop}
\label{isom-then-ws}
If $\Psi$ is an isomorphism, then $(W,(s_1,\dots,s_n))$ is well-stabilized.
\end{prop}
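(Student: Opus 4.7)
The plan is to use injectivity of $\Psi$ together with the naturality of the Hurwitz action. Since we already know $\overline{H}\subseteq H$, the only thing to prove is the reverse inclusion.

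Fix $\tau\in H$, so $\tau\cdot(s_1,\dots,s_n)=(s_1,\dots,s_n)$; the goal is to show $\tau\cdot(\overline{s}_1,\dots,\overline{s}_n)=(\overline{s}_1,\dots,\overline{s}_n)$. First I would apply $\Psi$ componentwise to the tuple $\tau\cdot(\overline{s}_1,\dots,\overline{s}_n)$. Since $\Psi$ is a group morphism and the Hurwitz action is defined purely in terms of the group operations (products and conjugations), applying any group morphism commutes with the Hurwitz action. Hence the image of $\tau\cdot(\overline{s}_1,\dots,\overline{s}_n)$ under componentwise $\Psi$ equals $\tau\cdot(\{s_1\},\dots,\{s_n\})$.

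Next, I would use Lemma \ref{hurwitz-duale} (which says exactly that the Hurwitz action on dual generators mirrors the action on the reflections $s_i$ via $t\mapsto\{t\}$) to compute
\[\tau\cdot(\{s_1\},\dots,\{s_n\})=(\{t_1\},\dots,\{t_n\}),\]
where $(t_1,\dots,t_n)=\tau\cdot(s_1,\dots,s_n)$. Since $\tau\in H$ this right-hand side is just $(\{s_1\},\dots,\{s_n\})$, which is $\Psi$ applied componentwise to $(\overline{s}_1,\dots,\overline{s}_n)$.

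Combining these two observations, $\Psi$ applied componentwise sends both $\tau\cdot(\overline{s}_1,\dots,\overline{s}_n)$ and $(\overline{s}_1,\dots,\overline{s}_n)$ to the same $n$-tuple in $\Art^*(W,T,h)$. Because $\Psi$ is assumed to be an isomorphism, it is injective, so these two $n$-tuples of elements of $\Art(W,S)$ must coincide. This gives $\tau\in\overline{H}$ and hence $H=\overline{H}$, as required. There isn't really a substantive obstacle here: the entire argument is a short exercise in pushing the Hurwitz action through the morphism $\Psi$, with Lemma \ref{hurwitz-duale} doing the essential bookkeeping.
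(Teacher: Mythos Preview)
Your proof is correct. It differs slightly from the paper's argument: the paper argues by contradiction via Proposition~\ref{prop-isom-iff}, choosing $\tau\in H\setminus\overline{H}$ and an index $i$ where $\tau\cdot(\overline{s}_1,\dots,\overline{s}_n)$ differs from $(\overline{s}_1,\dots,\overline{s}_n)$, then setting $\beta=\sigma_{n-1}\cdots\sigma_i$ to exhibit a pair with $\beta\approx\beta\tau$ but $\beta\not\dapprox\beta\tau$. Your route is more direct: you push the Hurwitz action through $\Psi$ using Lemma~\ref{hurwitz-duale} and then invoke injectivity of $\Psi$ componentwise. Both arguments are short; yours avoids the detour through the $\approx/\dapprox$ machinery, while the paper's version keeps everything phrased in terms of the equivalence relations that were just set up for Proposition~\ref{prop-isom-iff}.
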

\begin{proof}
Assume by contradiction that $(W,(s_1,\dots,s_n))$ is not well-stabilized, and fix $\tau\in H\setminus\overline{H}$. Since $\tau\notin\overline{H}$, there exists $i\in\{1,\dots,n\}$ such that the $i$-th element of the $n$-tuple $\tau\cdot (\overline{s}_1,\dots,\overline{s}_n)$ is different from $\overline{s}_i$. Therefore, if we define $\beta\coloneqq\sigma_{n-1}\sigma_{n-2}\dots\sigma_i$, we have $\beta\approx\beta\tau$ but not $\beta\dapprox\beta\tau$. By Proposition \ref{prop-isom-iff}, we have that $\Psi$ is not an isomorphism, which contradicts the hypothesis.
\end{proof}

\begin{prop}
\label{stab-ele-isom}
Suppose that $(W,(s_1,\dots,s_n))$ is well-stabilized, and that for each $t\in T\cap[1,h]_T$ the Hurwitz action on $\Red_T(ht^{-1})$ is transitive. Then, $\Psi$ is an isomorphism.
\end{prop}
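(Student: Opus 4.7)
The plan is to use Proposition \ref{prop-isom-iff} as the backbone: it suffices to prove that the relations $\approx$ and $\dot\approx$ on $\Br_n$ coincide. Since $\Art(W,S)$ projects onto $W$, one direction ($\dot\approx$ implies $\approx$) is automatic, so the real task is to assume $\tau\approx\tau'$ and deduce $\tau\dapprox\tau'$ from the two hypotheses.

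So fix $\tau,\tau'\in\Br_n$ with $\tau\approx\tau'$, let $t\in T\cap[1,h]_T$ be their common last image, and write
\[\tau\cdot(s_1,\dots,s_n)=(t_1,\dots,t_{n-1},t),\qquad \tau'\cdot(s_1,\dots,s_n)=(t_1',\dots,t_{n-1}',t).\]
Stripping off the last entry, both $(t_1,\dots,t_{n-1})$ and $(t_1',\dots,t_{n-1}')$ lie in $\Red_T(ht^{-1})$, since their product must be $ht^{-1}$. By the hypothesis on the Hurwitz action on $\Red_T(ht^{-1})$, I would obtain some $\rho\in\Br_{n-1}$, regarded as the subgroup of $\Br_n$ generated by $\sigma_1,\dots,\sigma_{n-2}$, such that $\rho\cdot(t_1,\dots,t_{n-1})=(t_1',\dots,t_{n-1}')$. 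Because the generators $\sigma_1,\dots,\sigma_{n-2}$ of this subgroup act trivially on the last coordinate of any $n$-tuple, we get the identity
\[\rho\tau\cdot(s_1,\dots,s_n)=(t_1',\dots,t_{n-1}',t)=\tau'\cdot(s_1,\dots,s_n),\]
so $(\tau')^{-1}\rho\tau\in H$.

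Now I invoke the well-stabilized hypothesis, which gives $H=\overline H$. Thus $(\tau')^{-1}\rho\tau\in\overline H$, i.e.\ $\rho\tau\cdot(\overline s_1,\dots,\overline s_n)=\tau'\cdot(\overline s_1,\dots,\overline s_n)$. The key point — and really the only subtle step — is that $\rho$, being in the subgroup $\langle\sigma_1,\dots,\sigma_{n-2}\rangle$, leaves the last coordinate of any tuple untouched under the Hurwitz action. Consequently the last entry of $\rho\tau\cdot(\overline s_1,\dots,\overline s_n)$ equals the last entry of $\tau\cdot(\overline s_1,\dots,\overline s_n)$, and these both equal the last entry of $\tau'\cdot(\overline s_1,\dots,\overline s_n)$. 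This is precisely $\tau\dapprox\tau'$.

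The main obstacle is conceptual rather than technical: one has to be careful to embed $\Br_{n-1}$ into $\Br_n$ in the right way so that the transitivity statement, which only provides an element acting on the first $n-1$ coordinates, can be composed with $\tau$ inside $\Br_n$ without disturbing the last coordinate. Once the embedding $\sigma_i\mapsto\sigma_i$ for $i\le n-2$ is fixed, this is immediate from the formula for the Hurwitz action. Combining with Proposition \ref{prop-isom-iff} then yields that $\Psi$ is an isomorphism.
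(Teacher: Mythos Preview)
Your proof is correct and follows essentially the same approach as the paper's: reduce to showing $\tau\approx\tau'\Rightarrow\tau\dapprox\tau'$ via Proposition~\ref{prop-isom-iff}, use Hurwitz transitivity on $\Red_T(ht^{-1})$ to find an element of $\Br_{n-1}\hookrightarrow\Br_n$ aligning the two tuples, apply $H=\overline H$, and then observe that this element fixes the last coordinate. The only difference is cosmetic (you write $\rho$ where the paper writes $\beta$, and you spell out a bit more explicitly why the last coordinate is preserved).
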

\begin{proof}
By Proposition \ref{prop-isom-iff}, it is enough to fix $\tau,\tau'\in\Br_n$ such that $\tau\approx\tau'$ and show $\tau\dapprox\tau'$.

Let $(t_1,\dots,t_n)\coloneqq\tau\cdot(s_1,\dots,s_n)$ and $(t_1',\dots,t_n')\coloneqq\tau'\cdot(s_1,\dots,s_n)$. From $\tau\approx\tau'$ we know $t_n=t_n'=:t$. Since the Hurwitz action on $\Red_T(ht^{-1})$ is transitive, there exists $\beta\in\Br_{n-1}$ such that $\beta\cdot (t_1,\dots,t_{n-1})=(t_1',\dots,t_{n-1}')$. Identifying $\Br_{n-1}$ with the subgroup of $\Br_n$ generated by $\sigma_1,\dots,\sigma_{n-2}$ via the inclusion $\sigma_i\mapsto\sigma_i$, we then have $\beta\tau\cdot(s_1,\dots,s_n)=\tau'\cdot(s_1,\dots,s_n)$, thus $(\tau')^{-1}\beta\tau\in H$. Since $H=\overline{H}$, we then have $\beta\tau\cdot(\overline{s}_1,\dots,\overline{s}_n)=\tau'\cdot(\overline{s}_1,\dots,\overline{s}_n)$. Since $\beta\in\Br_{n-1}$, this yields $\tau\dapprox\tau'$.
\end{proof}

We now state stronger conditions that imply the second part of the hypothesis from Proposition \ref{stab-ele-isom}.

\begin{prop}
\label{ele-forte}
Each of the following statements implies the next one:
\begin{enumerate}
\item For each $a\in [1,h]_T$, if $W'$ is the subgroup of $W$ generated by $T\cap [1,a]_T$, there exists $S'\subseteq T\cap [1,a]_T$ such that $(W',S')$ is a Coxeter system with Coxeter element $a$.
\item For each $a\in [1,h]_T$, the Hurwitz action on $\Red_T(a)$ is transitive.
\item For each $t\in T\cap[1,h]_T$ the Hurwitz action on $\Red_T(ht^{-1})$ is transitive.
\end{enumerate}
\end{prop}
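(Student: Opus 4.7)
The plan is to handle the two implications separately: (2) $\Rightarrow$ (3) is essentially a reduction, while (1) $\Rightarrow$ (2) is the substantial direction and hinges on a reflection subgroup identification.

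For (2) $\Rightarrow$ (3), fix $t \in T \cap [1, h]_T$. It suffices to show $ht^{-1} \in [1, h]_T$, after which (3) follows by specializing (2) to $a = ht^{-1}$. To witness $ht^{-1} \le_T h$, I would start from a reduced $T$-word $(t_1, \ldots, t_n) \in \Red_T(h)$ in which $t$ appears (such a word exists since $t \le_T h$), then apply the Hurwitz generators $\sigma_i$ successively to migrate $t$ rightward until it occupies the last coordinate. The length-$(n{-}1)$ prefix of the resulting word is a reduced $T$-word for $ht^{-1}$.

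For (1) $\Rightarrow$ (2), fix $a \in [1, h]_T$ and set $W' = \langle T \cap [1, a]_T \rangle$, with $S' \subseteq T \cap [1, a]_T$ as supplied by (1), so that $(W', S')$ is a Coxeter system having $a$ as a Coxeter element. Let $T'$ denote its set of reflections; since $S' \subseteq T$ and $T$ is stable under $W$-conjugation, $T' \subseteq T$. Applying Theorem \ref{hurwitz-cox} to the dual Coxeter system $(W', T')$ with Coxeter element $a$ yields transitivity of the Hurwitz action on $\Red_{T'}(a)$. The strategy is to transfer this transitivity to $\Red_T(a)$ by proving $\Red_T(a) = \Red_{T'}(a)$.

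The equality $\Red_T(a) = \Red_{T'}(a)$ I would decompose into two subclaims: (a) the length equality $\ell_T(a) = \ell_{T'}(a) = |S'|$, and (b) every letter of any reduced $T$-word for $a$ lies in $T'$. For (a), the standard fact that a Coxeter element of a rank-$|S'|$ system has reflection length $|S'|$ gives $\ell_{T'}(a) = |S'|$, and $\ell_T(a) \le \ell_{T'}(a)$ is immediate from $T' \subseteq T$; the reverse inequality will follow once (b) is established. Step (b) is the main obstacle: any letter $t$ of a reduced $T$-word for $a$ satisfies $t \le_T a$, so $t \in T \cap W'$, and the identification $T \cap W' = T'$ is where a nontrivial reflection-subgroup argument (in the spirit of Dyer's theorem on reflection subgroups of Coxeter groups) is required, saying that the reflections contained in a reflection subgroup agree with the reflections of its canonical Coxeter structure. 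Granting (a) and (b), both inclusions $\Red_T(a) \subseteq \Red_{T'}(a)$ and $\Red_{T'}(a) \subseteq \Red_T(a)$ become routine, and transitivity of the Hurwitz action on $\Red_T(a)$ is inherited from that on $\Red_{T'}(a)$.
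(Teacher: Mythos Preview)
Your approach coincides with the paper's, whose entire proof reads: ``The implication $1\Rightarrow 2$ follows from Theorem~\ref{hurwitz-cox}, while $2\Rightarrow 3$ is immediate.'' Your proposal unpacks this correctly, and in particular your identification of the equality $\Red_T(a)=\Red_{T'}(a)$ as the crux of $1\Rightarrow 2$ (together with the appeal to Dyer's theorem) is precisely what the paper suppresses.

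One refinement for your step (b): Dyer's theorem identifies $T\cap W'$ with the reflection set of his \emph{canonical} Coxeter generating set for the reflection subgroup $W'$, whereas your $T'$ is the reflection set of the particular system $(W',S')$ supplied by hypothesis (1). The inclusion $T'\subseteq T\cap W'$ is automatic from $S'\subseteq T$, but the reverse inclusion needs an extra word, since abstractly a group can carry two Coxeter structures with genuinely different reflection sets. In the situations covered by Remark~\ref{remark-known-cases} the cited references establish $|S'|=\ell_T(a)$ directly, which closes this gap; the paper, in compressing the argument to a single clause, elides this point as well.
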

\begin{proof}
The implication $1\Rightarrow 2$ follows from Theorem \ref{hurwitz-cox}, while $2\Rightarrow 3$ is immediate.
\end{proof}

If the second statement of Proposition \ref{ele-forte} holds, we say that $(W,T,h)$ is \emph{pan-transitive}. We now state the following theorem, which will be the focus of the rest of this paper:
\begin{teo}
\label{thm-focus}
If $(W,(s_1,\dots,s_n))$ is well-stabilized and $(W,T,h)$ is pan-transitive, then the groups $\Art(W,S)$ and $\Art^*(W,T,h)$ are isomorphic.
\end{teo}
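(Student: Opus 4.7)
The plan is to observe that Theorem \ref{thm-focus} is essentially a direct combination of Proposition \ref{stab-ele-isom} and Proposition \ref{ele-forte}, and therefore the proof reduces to verifying the transfer of hypotheses.

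More precisely, I would first note that the hypothesis ``$(W,T,h)$ is pan-transitive'' is exactly statement (2) of Proposition \ref{ele-forte}. By that proposition, statement (2) implies statement (3), namely that for each $t\in T\cap[1,h]_T$, the Hurwitz action on $\Red_T(ht^{-1})$ is transitive. The only minor subtlety here is the implicit fact that $ht^{-1}\in [1,h]_T$ whenever $t\in T\cap [1,h]_T$: since $T$ is closed under conjugation, a reduced $T$-word representing $h$ with last letter $t$ exists (by Theorem \ref{hurwitz-cox} applied to any reduced word for $h$, or by the suffix characterization of $\le_T$ recalled in Subsection 2.2), and removing that last letter shows $\len_T(ht^{-1})=n-1$ with $ht^{-1}\le_T h$.

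Having secured that, combined with the assumption that $(W,(s_1,\dots,s_n))$ is well-stabilized, the hypotheses of Proposition \ref{stab-ele-isom} are satisfied. That proposition then immediately gives that $\Psi:\Art(W,S)\rightarrow\Art^*(W,T,h)$ is an isomorphism, yielding the desired isomorphism between the standard and dual Artin groups.

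Since all substantive work has already been carried out in the earlier propositions, there is no real obstacle in this step; the proof is a one-line invocation of those results. The conceptual ``hard part'' has been front-loaded into Proposition \ref{prop-isom-iff} (translating the isomorphism question into the equality of the relations $\approx$ and $\dapprox$) and Proposition \ref{stab-ele-isom} (leveraging well-stabilization together with lower-level Hurwitz transitivity to upgrade $\approx$ to $\dapprox$); here we simply package the final implication.
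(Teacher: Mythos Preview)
Your proposal is correct and matches the paper's proof exactly: the paper simply writes ``Follows immediately from Propositions \ref{stab-ele-isom} and \ref{ele-forte}.'' Your additional remark about $ht^{-1}\in[1,h]_T$ is fine but unnecessary here, since the implication $2\Rightarrow 3$ in Proposition \ref{ele-forte} is stated as such and needs no further justification at this point.
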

\begin{proof}
Follows immediately from Propositions \ref{stab-ele-isom} and \ref{ele-forte}.
\end{proof}

\begin{oss}
\label{remark-known-cases}
In the following cases it is known that the first statement of Proposition \ref{ele-forte} holds:
\begin{itemize}
\item The spherical case \cite{bessis2002dual}, i.e. the case where $W$ is finite
\item The affine case \cite{giove}
\item The rank 3 case \cite{rank3}
\item The crystallographic case in the sense of Hubery-Krause \cite{huberykrause}, i.e. the case where the following hold:
\begin{itemize}
\item[(1)] $\forall s,s'\in S\quad m(s,s')\in\{2,3,4,6,\infty\}$, where $m$ is the Coxeter matrix of $(W,S)$
\item[(2)] In each circuit of the Coxeter graph not containing the edge label $\infty$, the number of edges labelled 4 is even, and so is the number of edges labelled 6.
\end{itemize}
\end{itemize}
From Proposition \ref{ele-forte}, we then know that in those cases $(W,T,h)$ is pan-transitive. Moreover, in the spherical \cite{bessis2002dual} and affine \cite{mccammond2017artin} cases, it is also known that $\Psi$ is an isomorphism, so by Proposition \ref{isom-then-ws} we know that in those cases $(W,(s_1,\dots,s_n))$ is well-stabilized. In \cite{rank3} it is proven that in the rank 3 case the groups $\Art(W,S)$, $\Art^*(W,T,h)$ are isomorphic, however it is not explicitly proven that $\Psi$ is an isomorphism.
\end{oss}

\section{The pan-transitive condition}
\label{sect-ele}
The aim of this Section is to give a deep dive on what the pan-transitive condition that appears in the statement of Theorem \ref{thm-focus} is, and to show many useful equivalent conditions.

Throughout Section \ref{sect-ele}, we shall continue to use all of the notations from Section \ref{sect-morph}. Moreover, if $a\in [1,h]_T$ and $\tilde{a}\in [1,g]_R\cap\pi^{-1}(a)$, we shall simply write $\tilde{a}\in a$. Finally, we shall identify $F_n$ with the fundamental group of $(\C\setminus\{x_1,\dots,x_n\},x_0)$ and define noncrossing loops, $\NC$ and $\NC_g$ as in Subsection \ref{subs-fg}, in order to make use of the equality between $(\NC_g,\subseteq)$ and $([1,g]_R,\le_R)$ from Theorem \ref{ncg-equal-1g}.

\subsection{Conditions related to $[1,g]_R$}
\begin{prop}
\label{ele-sse-1gt}
The following are equivalent:
\begin{enumerate}
\item For each $t,b\in [1,h]_T$ such that $t\le_T b$ and $t\in T$, and for each $\tilde{b}\in b$, there exists $\tilde{t}\in t$ such that $\tilde{t}\le_R\tilde{b}$.
\item For each $a,b\in [1,h]_T$ such that $a\le_T b$ and $a^{-1}b\in T$, and for each $\tilde{a}\in a$, there exists $\tilde{b}\in b$ such that $\tilde{a}\le_R\tilde{b}$.
\item For each $a,b\in [1,h]_T$ such that $a\le_T b$, and for each $\tilde{a}\in a$, there exists $\tilde{b}\in b$ such that $\tilde{a}\le_R\tilde{b}$.
\item For each $a,b\in [1,h]_T$ such that $a\le_T b$, and for each $\tilde{b}\in b$, there exists $\tilde{a}\in a$ such that $\tilde{a}\le_R\tilde{b}$.
\item $(W,T,h)$ is pan-transitive.
\end{enumerate}
\end{prop}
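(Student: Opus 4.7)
My plan is to establish the five-way equivalence in three stages: iteration arguments give $(1)\Leftrightarrow(4)$ and $(2)\Leftrightarrow(3)$; direct Hurwitz and inductive lifting arguments give $(1)\Leftrightarrow(5)$; and a Kreweras-type duality bridges $(3)\Leftrightarrow(4)$, tying everything together.

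First I would handle the easy specializations and iterations. $(3)\Rightarrow(2)$ and $(4)\Rightarrow(1)$ are immediate. For $(2)\Rightarrow(3)$, I would fix a maximal chain of covers $a = a_0 <_T a_1 <_T \cdots <_T a_d = b$ in $[1,h]_T$ and apply $(2)$ inductively to extend $\tilde a$ upward one cover at a time. Dually, for $(1)\Rightarrow(4)$, I would iterate downward along such a chain: given $\tilde a_i \le_R \tilde b$, condition $(1)$ applied to the reflection $t_i := a_{i-1}^{-1} a_i$ and $\tilde a_i$ produces $\tilde t_i \in t_i$ with $\tilde t_i \le_R \tilde a_i$, and the suffix characterization of $\le_R$ (valid because $R$ is closed under conjugation) lets me set $\tilde a_{i-1} := \tilde a_i \tilde t_i^{-1}$, which lifts $a_{i-1}$ and still lies below $\tilde b$.

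For the link with $(5)$, I would first show $(5)\Rightarrow(4)$: given any $(r_1,\ldots,r_j)\in\Red_R(\tilde b)$ with $j=\len_T(b)$, use $(5)$ on $\Red_T(b)$ to find $\tau\in\Br_j$ moving its projection to a reduced $T$-word of $b$ whose first $k:=\len_T(a)$ entries form a reduced word of $a$; applying $\tau$ via Hurwitz to $(r_1,\ldots,r_j)$ and using $\pi$-equivariance yields an $R$-word in $\Red_R(\tilde b)$ whose length-$k$ prefix gives the desired $\tilde a$. The central converse $(1)\Rightarrow(5)$ I would obtain by proving the stronger claim that, for every $a$, every $\tilde a \in a$, and every $w\in\Red_T(a)$, some $\tilde w\in\Red_R(\tilde a)$ satisfies $\pi(\tilde w)=w$. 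This implies $(5)$ because $\Red_R(\tilde a)$ is a single Hurwitz orbit (Theorem~\ref{hurwitz-fg}) and its $\pi$-image is therefore a single Hurwitz orbit in $\Red_T(a)$, hence all of $\Red_T(a)$. The proof is by induction on $k=\len_T(a)$: with $w=(t_1,\ldots,t_k)$, condition $(1)$ produces $\tilde t_1 \in t_1$ with $\tilde t_1 \le_R \tilde a$, so $\tilde a' := \tilde t_1^{-1}\tilde a$ is a lift of $a' := t_1 a = t_2\cdots t_k$ of $R$-length $k-1$, and the inductive hypothesis applied to $(a', \tilde a', (t_2, \ldots, t_k))$ finishes the step.

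Finally, to bridge $\{(1),(4)\}$ and $\{(2),(3)\}$, I would invoke the Kreweras complement $K : [1,g]_R \to [1,g]_R$ defined by $K(\tilde a) := \tilde a^{-1} g$. Well-definedness follows from the identity $\len_R(\tilde a) + \len_R(\tilde a^{-1} g) = n$; bijectivity, because $K(g\tilde y^{-1}) = \tilde y$ for every $\tilde y\in[1,g]_R$; and order-reversal, because if $\tilde b = \tilde a \tilde u$ is a reduced factorization then $K(\tilde a) = \tilde u K(\tilde b)$ with complementary factor a conjugate of $\tilde u$ (hence of the same $R$-length). Since $K$ commutes with $\pi$ via the analogous $K_T(a) := a^{-1} h$ on $[1,h]_T$, translating each variable of $(3)$ through $K$ and $K_T$ produces exactly the statement of $(4)$: $a\le_T b$ becomes $K_T(b)\le_T K_T(a)$, ``for each $\tilde a \in a$'' becomes ``for each $K(\tilde a)\in K_T(a)$'', and ``$\tilde a \le_R \tilde b$'' becomes ``$K(\tilde b) \le_R K(\tilde a)$''. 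The main obstacle is the inductive lifting in $(1)\Rightarrow(5)$; the Kreweras verification is more routine but demands careful bookkeeping of $R$-lengths.
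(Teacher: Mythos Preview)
Your proof is correct and complete. Both you and the paper rely on the same three ingredients---iteration along maximal chains, the complement map $\tilde a\mapsto\tilde a^{-1}g$ (which you name explicitly as Kreweras), and the Hurwitz transitivity of Theorem~\ref{hurwitz-fg}---but the logical organization differs. The paper runs a single cycle $1\Rightarrow 2\Rightarrow 3\Rightarrow 4\Rightarrow 5\Rightarrow 1$: the complement trick is used twice (in $1\Rightarrow 2$ and $3\Rightarrow 4$) but never isolated as a lemma, and the pan-transitive link is $4\Rightarrow 5$, where both reduced $T$-words are simultaneously lifted to $\Red_R(\tilde a)$ for a common $\tilde a$ and then connected by Theorem~\ref{hurwitz-fg}. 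You instead cluster the statements into $(1)\Leftrightarrow(4)$, $(2)\Leftrightarrow(3)$, $(1)\Leftrightarrow(5)$ and bridge $(3)\Leftrightarrow(4)$ by the Kreweras anti-isomorphism stated once and for all; your $(1)\Rightarrow(5)$ proves the stronger surjectivity statement ``every $w\in\Red_T(a)$ lifts to $\Red_R(\tilde a)$'' by induction on length. Your route proves a couple more implications than strictly necessary, but the gain is modularity: the Kreweras duality and the lifting lemma are reusable on their own. One small point worth making explicit in your $(1)\Rightarrow(5)$ step is that $\tilde a':=\tilde t_1^{-1}\tilde a$ lies in $[1,g]_R$: this follows from $\tilde a'\le_R\tilde a$ (since $\tilde t_1$ occurs as a suffix, its left complement is a prefix of the same length) and $\tilde a\le_R g$, and is needed for the notation $\tilde a'\in a'$ to apply.
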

\begin{proof}
We shall prove the implications $1\Rightarrow 2$, $2\Rightarrow 3$, $3\Rightarrow 4$, $4\Rightarrow 5$ and $5\Rightarrow 1$.

\begin{enumerate}
\item[$1\Rightarrow 2$] Let $a,b,\tilde{a}$ as in statement 2, and let $t\coloneqq a^{-1}b$. Since $t\le_T a^{-1}h$ and $\tilde{a}^{-1}g\in a^{-1}h$, statement 1 yields the existence of $\tilde{t}\in t$ such that $\tilde{t}\le_R\tilde{a}^{-1}g$. We then define $\tilde{b}\coloneqq\tilde{a}\tilde{t}$.

\item[$2\Rightarrow 3$] Let $a,b,\tilde{a}$ as in statement 3, and let $(a_0,\dots,a_k)$ be a maximal chain in $[a,b]_R$. From $\tilde{a}\in a=a_0$ and statement 2, it is easy to prove by induction that for each $j\in\{1,\dots,k\}$ there exists $\tilde{a}_j\in a_j$ such that $\tilde{a}_j\ge_R\tilde{a}_{j-1}\ge_R\tilde{a}$. We then define $\tilde{b}\coloneqq\tilde{a}_k$.

\item[$3\Rightarrow 4$] Let $a,b,\tilde{b}$ as in statement 4. Since $b^{-1}h\le_T a^{-1}h$ and $\tilde{b}^{-1}g\in b^{-1}h$, statement 3 yields the existence of $\tilde{a}'\in a^{-1}h$ such that $\tilde{a}'\ge_R\tilde{b}^{-1}g$. We then define $\tilde{a}\coloneqq g\cdot(\tilde{a}')^{-1}$.

\item[$4\Rightarrow 5$] Let $a\in[1,h]_T$, and fix $\tilde{a}\in a$, $(t_1,\dots,t_k),(t_1',\dots,t_k')\in\Red_T(a)$. We shall prove the existence of $\tau\in\Br_k$ such that $(t_1',\dots,t_k')=\tau\cdot(t_1,\dots,t_k)$.

For each $j\in\{1,\dots,k\}$, let $a_j\coloneqq t_1t_2\dots t_j$ and $a_j'\coloneqq t_1' t_2'\dots t_j'$. From $\tilde{a}\in a=a_k=a_k'$ and statement 4, it is easy to prove by induction that for each $j\in\{1,\dots,k\}$ there exist $\tilde{a}_j\in a_j$ and $\tilde{a}_j'\in a_j'$ such that $\tilde{a}_j\le_R\tilde{a}_{j+1}$ and $\tilde{a}_j'\le_R\tilde{a}_{j+1}'$. We then define $\tilde{a}_0=\tilde{a}_0'\coloneqq 1\in[1,g]_R$, and for each $j\in\{1,\dots,k\}$ we define $\tilde{t}_j\coloneqq\tilde{a}_{j-1}^{-1}\tilde{a}_j$ and $\tilde{t}_j'\coloneqq(\tilde{a}_{j-1}')^{-1}\tilde{a}_j'$. Notice that for each $j\in\{1,\dots,k\}$ we have $\tilde{t}_j\in t_j$ and $\tilde{t}_j'\in t_j'$.

We have $(\tilde{t}_1,\dots,\tilde{t}_n),(\tilde{t}_1',\dots,\tilde{t}_n')\in\Red_R(\tilde{a})$. By Theorem \ref{hurwitz-fg}, we know that there exists $\tau\in\Br_k$ such that $(\tilde{t}_1',\dots,\tilde{t}_k')=\tau\cdot(\tilde{t}_1,\dots,\tilde{t}_k)$. Applying the group morphism $\pi$ to all elements of these $k$-tuples yields $(t_1',\dots,t_k')=\tau\cdot(t_1,\dots,t_k)$.

\item[$5\Rightarrow 1$] Let $t,b,\tilde{b}$ as in statement 1. Fix $(\tilde{t}_1',\dots,\tilde{t}_k')\in\Red_R(\tilde{b})$ and $(t_1,\dots,t_{k-1})\in\Red_T(bt^{-1})$. For each $j\in\{1,\dots,k\}$, let $t_j'\coloneqq\pi(\tilde{t}_j')$. Statement 5 yields the existence of $\tau\in\Br_k$ such that $\tau\cdot(t_1',\dots,t_k')=(t_1,\dots,t_{k-1},t)$. We then define $\tilde{t}$ as the last element of the $k$-tuple $\tau\cdot(\tilde{t}_1',\dots,\tilde{t}_k')$.
\end{enumerate}
\end{proof}

\subsection{An action of $\Br_n$ on $[1,g]_R$}
There is a well-known isomorphism between $\Br_n$ and the group of homeomorphisms from $\C\setminus\{x_1,\dots,x_n\}$ to itself, up to isotopy, that are pointwise equal to the identity map in the complement of a compact subset of $\C$. This isomorphism yields an action of $\Br_n$ by automorphisms on the fundamental group of $\C\setminus\{x_1,\dots,x_n\}$, which is $F_n$. We shall denote this action with the symbol $\star$, in order to distinguish it with the Hurwitz action, which we shall keep denoting with $\cdot$. We shall denote the map $F_n\rightarrow F_n$ given by $a\mapsto\tau\star a$ as $\iota(\tau)$.

Fixing $i\in\{1,\dots,n-1\}$, there are two ways to see $\iota(\sigma_i)$. First, it can be seen as the map between fundamental groups induced by the homeomorphism $\alpha_i$ from $\C\setminus\{x_1,\dots,x_n\}$ to itself described in Figure \ref{fig-disegnino}.
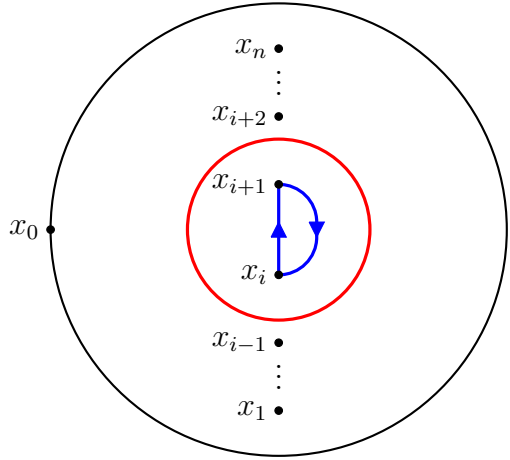
\begin{figure}[h]
\centering

\begin{tikzpicture}
\draw [thick] (0,0) circle [radius=3];
\draw [fill] (-3,0) circle [radius=0.05];
\node[left] at (-3,0) {$x_0$};
\draw [fill] (0,-1.5) circle [radius=0.05];
\node[left] at (0,-1.5) {$x_{i-1}$};
\node at (0,-1.85) {$\vdots$};
\draw [fill] (0,-2.4) circle [radius=0.05];
\node[left] at (0,-2.4) {$x_1$};
\draw [fill] (0,1.5) circle [radius=0.05];
\node[left] at (0,1.5) {$x_{i+2}$};
\node at (0,2.05) {$\vdots$};
\draw [fill] (0,2.4) circle [radius=0.05];
\node[left] at (0,2.4) {$x_n$};
\draw [red,very thick] (0,0) circle [radius=1.2];
\draw [blue,very thick] (0,-0.6) -- (0,0.6) arc [radius=0.5,start angle=90,end angle=0] -- (0.5, -0.1) arc [radius=0.5,start angle=0,end angle=-90];
\draw [fill] (0,-0.6) circle [radius=0.05];
\node[left] at (0,-0.6) {$x_{i}$};
\draw [fill] (0,0.6) circle [radius=0.05];
\node[left] at (0,0.6) {$x_{i+1}$};
\draw [blue,fill=blue] (-0.1,-0.1) -- (0.1,-0.1) -- (0,0.1) -- (-0.1,-0.1);
\draw [blue,fill=blue] (0.4,0.1) -- (0.6,0.1) -- (0.5,-0.1) -- (0.4,0.1);
\end{tikzpicture}
\caption{The black circle is the image of a noncrossing loop that represents $g$. Let $F:\C\times[0,1]\rightarrow \C$ be an isotopy such that $F(\cdot,0)$ is the identity, the restriction of $F(\cdot,t)$ outside of the disk delimited by the red circle is the identity for each $t$, and the blue paths are $F(x_i,\cdot)$ and $F(x_{i+1},\cdot)$. The homeomorphism $\alpha_i$ is the restriction of $F(\cdot,1)$ to $\C\setminus\{x_1,\dots,x_n\}$.}
\label{fig-disegnino}
\end{figure}
From this, we can prove the following:
\begin{prop}
\label{prop-nc-rainbow}
For each $\tau\in\Br_n$, the map $\iota(\tau):F_n\rightarrow F_n$ restricts to a poset isomorphism $(\NC,\subseteq)\rightarrow (\NC,\subseteq)$.
\end{prop}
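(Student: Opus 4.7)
The plan is to translate the statement into a purely topological assertion about the homeomorphism representing $\iota(\tau)$ and then use the fact that ``noncrossing'' is essentially a homeomorphism-invariant geometric property. Fix $\tau \in \Br_n$, and let $\alpha_\tau \colon \C \setminus \{x_1,\dots,x_n\} \to \C \setminus \{x_1,\dots,x_n\}$ be a representative homeomorphism that is the identity outside some compact $K \subset \C$. Since $x_0$ is fixed under all such homeomorphisms, we may and will assume $x_0 \notin K$. I would then extend $\alpha_\tau$ to a homeomorphism $\hat{\alpha}_\tau \colon \C \to \C$ by sending each $x_i$ to wherever the isotopy drags it; this extension is still the identity outside $K$, hence isotopic to the identity of $\C$ through compactly supported homeomorphisms and therefore orientation-preserving. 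By definition of $\iota$, for any element $a \in F_n$ represented by a loop $\gamma$, the element $\tau \star a$ is represented by the loop $\hat{\alpha}_\tau \circ \gamma$.

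Next, I would verify that $\iota(\tau)$ sends $\NC$ into itself. Let $a \in \NC$ with noncrossing representative $\gamma$. The loop $\hat{\alpha}_\tau \circ \gamma$ is again simple (homeomorphisms preserve injectivity) and based at $x_0$ (since $\hat{\alpha}_\tau(x_0) = x_0$). Its orientation is counterclockwise because $\hat{\alpha}_\tau$ is orientation-preserving and $\gamma$ was counterclockwise. Hence $\hat{\alpha}_\tau \circ \gamma$ is a noncrossing loop representing $\tau \star a$, so $\tau \star a \in \NC$.

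For order-preservation, suppose $a \subseteq b$ in $\NC$, witnessed by noncrossing loops $\gamma \subseteq \delta$, so $\Int(\gamma) \subseteq \Int(\delta)$. Viewing $\hat{\alpha}_\tau$ as a homeomorphism of $S^2$ fixing a neighborhood of $\infty$, $\hat{\alpha}_\tau$ sends the bounded (respectively, unbounded) component of $\C \setminus \gamma$ to the bounded (respectively, unbounded) component of $\C \setminus (\hat{\alpha}_\tau \circ \gamma)$. Therefore $\Int(\hat{\alpha}_\tau \circ \gamma) = \hat{\alpha}_\tau(\Int(\gamma))$ and likewise for $\delta$. Applying $\hat{\alpha}_\tau$ preserves the inclusion, so $\tau \star a \subseteq \tau \star b$.

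Finally, to promote this to a poset isomorphism, I would invoke the group-action structure: $\iota(\tau^{-1}) \circ \iota(\tau) = \iota(\tau) \circ \iota(\tau^{-1}) = \id_{F_n}$. The previous paragraphs apply equally to $\tau^{-1}$, so $\iota(\tau^{-1})$ also restricts to an order-preserving self-map of $\NC$. This makes $\iota(\tau)\vert_{\NC}$ a bijection of $\NC$ with order-preserving inverse, i.e.\ a poset isomorphism. The only delicate point — and the part I expect to require the most care — is the orientation argument: one needs to justify that $\hat{\alpha}_\tau$ truly preserves the ``counterclockwise'' property of loops, which ultimately rests on compact support forcing orientation-preservation. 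Everything else is straightforward point-set topology (Jordan curve theorem and preservation of connected components under homeomorphisms of $S^2$ fixing $\infty$).
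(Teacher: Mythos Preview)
Your proof is correct and follows essentially the same approach as the paper: use that $\iota(\tau)$ is induced by an orientation-preserving compactly supported homeomorphism of the punctured plane, so it carries simple counterclockwise loops to simple counterclockwise loops and preserves inclusion of interiors, then invoke $\iota(\tau^{-1})$ for the inverse. The only structural difference is that the paper reduces to the generators $\sigma_i^{\pm 1}$ (using the explicit half-twist $\alpha_i$ of Figure~\ref{fig-disegnino}) and then inducts on word length, whereas you work directly with an arbitrary $\tau$ via a single representative homeomorphism $\hat\alpha_\tau$; your route is marginally cleaner but not substantively different.
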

\begin{proof}
We shall prove the statement assuming $\tau=\sigma_i$ for some $i\in\{1,\dots,n-1\}$. The case $\tau=\sigma_i^{-1}$ is similar, and the general case follows by these two by induction on $\len_{\{\sigma_1,\dots,\sigma_{n-1}\}}(\tau)$.

Let $a\in\NC$, let $\gamma$ be a noncrossing loop representing $a$, and let $\alpha_i$ be the homeomorphism described in Figure \ref{fig-disegnino}. Since $\alpha_i$ is an orientation-preserving homeomorphism, we know that $\alpha_i\circ\gamma$ does not self-intersect and is followed counterclockwise, so it is a noncrossing loop. Therefore, we have $\sigma_i\star a = (\alpha_i)_*([\gamma]) = [\alpha_i\circ\gamma]\in\NC$. This proves that $\iota(\sigma_i)$ restricts to a map $\NC\rightarrow\NC$.

Let $b,c\in\NC$ such that $b\subseteq c$. By definition, there exist let $\delta,\varepsilon$ noncrossing loops representing $b,c$ respectively, such that $\delta\subseteq\varepsilon$. Since $\alpha_i$ is an homeomorphism, we know that $\alpha_i\circ\delta\subseteq\alpha_i\circ\varepsilon$, which yields $\sigma_i\star b\subseteq\sigma_i\star c$. This proves that $\iota(\sigma_i)$ restricts to a poset morphism $(\NC,\subseteq)\rightarrow (\NC,\subseteq)$.

In a similar way, one can prove the same for $\iota(\sigma_i^{-1})$, and it is easy to check that the restrictions of $\iota(\sigma_i)$ and $\iota(\sigma_i^{-1})$ on $\NC$ are inverse to one another. This proves that $\iota(\sigma_i)$ restricts to a poset isomorphism $(\NC,\subseteq)\rightarrow (\NC,\subseteq)$.
\end{proof}
\begin{cor}
\label{cor-arconc}
For each $\tau\in\Br_n$, the map $\iota(\tau):F_n\rightarrow F_n$ restricts to a poset isomorphism $([1,g]_R,\le_R)\rightarrow ([1,g]_R,\le_R)$.
\end{cor}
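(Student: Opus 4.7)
The plan is to deduce the statement from Proposition \ref{prop-nc-rainbow} together with Theorem \ref{ncg-equal-1g}. The latter identifies the poset $([1,g]_R,\le_R)$ with $(\NC_g,\subseteq)$, i.e.\ with the downward-closed interval below $g$ inside $(\NC,\subseteq)$. Proposition \ref{prop-nc-rainbow} already provides a poset isomorphism $\iota(\tau):(\NC,\subseteq)\to(\NC,\subseteq)$. Hence the only remaining point is to verify that this isomorphism fixes $g$; once this is known, it automatically restricts to a poset isomorphism of $[1,g]=\NC_g$ onto itself, both directions being covered because $\iota(\tau^{-1})$ is the inverse automorphism.

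To check that $\iota(\tau)(g)=g$ for every $\tau\in\Br_n$, I would first reduce to the case $\tau=\sigma_i$ with $i\in\{1,\dots,n-1\}$, since $\iota$ is a group morphism and the case $\tau=\sigma_i^{-1}$ follows from the case $\tau=\sigma_i$. From the geometric picture in Figure \ref{fig-disegnino}, the homeomorphism $\alpha_i$ swaps $x_i$ and $x_{i+1}$ along the drawn paths while fixing the other punctures, and a direct reading of the induced action on the loops from Figure \ref{fig-f1fn} gives
\[\iota(\sigma_i)(f_j)=f_j\text{ for }j\ne i,i+1,\qquad \iota(\sigma_i)(f_i)=f_if_{i+1}f_i^{-1},\qquad \iota(\sigma_i)(f_{i+1})=f_i.\]
Applying the automorphism $\iota(\sigma_i)$ to $g=f_1\cdots f_n$ then produces
\[f_1\cdots f_{i-1}\cdot(f_if_{i+1}f_i^{-1})\cdot f_i\cdot f_{i+2}\cdots f_n=f_1\cdots f_n=g,\]
so $\iota(\sigma_i)(g)=g$ and the reduction is complete.

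The main (mild) obstacle is the geometric verification that $\iota(\sigma_i)$ acts on the free generators by the formulas above; this is a standard picture-chase based on Figures \ref{fig-disegnino} and \ref{fig-f1fn}, and is of the same flavour as the computations already performed in the proof of Proposition \ref{prop-nc-rainbow}. After that step, the corollary is a one-line consequence: $\iota(\tau)$ is a poset automorphism of $(\NC,\subseteq)$ that fixes the element $g$, so it restricts to a poset automorphism of the interval $\NC_g$ below $g$, which is $([1,g]_R,\le_R)$ by Theorem \ref{ncg-equal-1g}.
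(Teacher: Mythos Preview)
Your approach is essentially identical to the paper's: reduce to $\tau=\sigma_i$, check $\iota(\sigma_i)(g)=g$, and then invoke Proposition \ref{prop-nc-rainbow} together with Theorem \ref{ncg-equal-1g} to restrict the poset automorphism of $\NC$ to $\NC_g=[1,g]_R$. The paper reads off $(\alpha_i)_*(g)=g$ directly from Figure \ref{fig-disegnino} rather than computing the action on each $f_j$, but the logic is the same.

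One small inaccuracy worth fixing: the explicit formulas you wrote for $\iota(\sigma_i)$ on the generators do not match the paper's convention (compare with \eqref{eqn-def-star}, which gives $\sigma_i\star f_i=f_{i+1}$ and $\sigma_i\star f_{i+1}=f_{i+1}^{-1}f_if_{i+1}$); what you wrote is the action of $\sigma_i^{-1}$ in this setup. This does not affect the corollary, since either formula sends $g$ to $g$, but you should correct it to avoid inconsistencies with the later sections.
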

\begin{proof}
Let $i\in\{1,\dots,n-1\}$, and let $\alpha_i$ be the homeomorphism described in Figure \ref{fig-disegnino}. From that figure it is evident that $(\alpha_i)_*(g)=g$, and thus $\sigma_i\star g = g$. Similarly to the proof of Proposition \ref{prop-nc-rainbow}, this is enough to prove $\tau\star g = g$.

Recalling the definition of $\NC_g$, from $\tau\star g = g$ and Proposition \ref{prop-nc-rainbow} we get that $\iota(\tau)$ restricts to a poset isomorphism $(\NC_g,\subseteq)\rightarrow (\NC_g,\subseteq)$. The statement follows from Theorem \ref{ncg-equal-1g}.
\end{proof}

The map $\iota(\sigma_i)$ can also be defined in a purely algebraic way, as follows:
\begin{equation}
\sigma_i\star f_j=\begin{cases}
f_{i+1}&\text{if }j=i\\
f_{i+1}^{-1}f_if_{i+1}&\text{if }j=i+1\\
f_j&\text{otherwise}
\end{cases}
\label{eqn-def-star}
\end{equation}
If $k$ is a positive integer, we define an action of $\Br_n$ on $(F_n)^k$ as follows:
\[\tau\star (a_1,\dots,a_k)\coloneqq(\tau\star a_1,\dots,\tau\star a_k)\]
We shall now explain the relationship between this action and the Hurwitz action.
\begin{prop}
\label{prop-starhur-1}
For each $\beta,\tau\in\Br_n$ and for each $(a_1,\dots,a_n)\in (F_n)^n$ we have $\beta\star(\tau\cdot(a_1,\dots,a_n))=\tau\cdot(\beta\star(a_1,\dots,a_n))$
\end{prop}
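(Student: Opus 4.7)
The plan is to exploit the key fact, recorded in the paragraph introducing the $\star$ action, that $\iota(\beta)=\beta\star(\cdot)$ is a \emph{group automorphism} of $F_n$ for every $\beta\in\Br_n$. Since the Hurwitz action is given by a formula involving only the group operations on the entries (products, inverses, and reordering of coordinates), any group endomorphism applied componentwise must commute with it.

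First I would reduce to the case where $\tau$ is one of the standard generators $\sigma_i^{\pm 1}$. This is a routine induction on the length of a word in $\sigma_1^{\pm 1},\dots,\sigma_{n-1}^{\pm 1}$ representing $\tau$: writing $\mathbf{a}=(a_1,\dots,a_n)$, if the identity holds for both $\tau_1$ and $\tau_2$, then
\[\beta\star\bigl((\tau_1\tau_2)\cdot\mathbf{a}\bigr)=\beta\star\bigl(\tau_1\cdot(\tau_2\cdot\mathbf{a})\bigr)=\tau_1\cdot\bigl(\beta\star(\tau_2\cdot\mathbf{a})\bigr)=\tau_1\cdot\bigl(\tau_2\cdot(\beta\star\mathbf{a})\bigr)=(\tau_1\tau_2)\cdot(\beta\star\mathbf{a}),\]
and the base case $\tau=1$ is immediate.

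For $\tau=\sigma_i$, applying the Hurwitz formula and then $\beta\star$ componentwise produces, in the $i$-th coordinate, the element $\beta\star(a_i a_{i+1} a_i^{-1})$; since $\beta\star$ is a group automorphism of $F_n$, this equals $(\beta\star a_i)(\beta\star a_{i+1})(\beta\star a_i)^{-1}$, which is precisely the $i$-th coordinate of $\sigma_i\cdot(\beta\star a_1,\dots,\beta\star a_n)$. The $(i+1)$-th coordinates coincide trivially as $\beta\star a_i$, and the remaining coordinates are unaffected by $\sigma_i\cdot$ on both sides. The case $\tau=\sigma_i^{-1}$ is handled identically using the analogous formula for $\sigma_i^{-1}\cdot$.

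The hard part here is essentially nonexistent: once one observes that the Hurwitz formulas involve only group-theoretic operations on the coordinates, and that $\beta\star$ is a group automorphism by construction, the statement reduces to bookkeeping. The only point that needs emphasis is the automorphism property of $\beta\star$, which is built into the definition of the $\star$ action through its interpretation as the action on $\pi_1$ induced by homeomorphisms of the punctured plane.
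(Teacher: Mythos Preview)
Your proof is correct and is exactly the paper's approach: the paper's proof is the single sentence ``This follows immediately from the fact that $\iota(\beta)$ is a group morphism,'' and your argument simply unpacks this observation by reducing to the generators $\sigma_i^{\pm1}$ and checking that a componentwise group automorphism preserves the Hurwitz formulas. There is no discrepancy.
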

\begin{proof}
This follows immediately from the fact that $\iota(\beta)$ is a group morphism.
\end{proof}
\begin{prop}
\label{prop-starhur-2}
For each $i\in\{1,\dots,n-1\}$ and for each $\varepsilon\in\{-1,1\}$, we have $\sigma_i^\varepsilon\star(r_1,\dots,r_n)=\sigma_i^{-\varepsilon}\star(r_1,\dots,r_n)$
\end{prop}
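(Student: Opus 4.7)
Interpreting the statement: taken at face value, the claim $\sigma_i^\varepsilon \star (r_1,\ldots,r_n) = \sigma_i^{-\varepsilon} \star (r_1,\ldots,r_n)$ for both $\varepsilon \in \{-1,1\}$ forces $\iota(\sigma_i^2)$ to fix each coordinate $r_j$ individually; since this automorphism is visibly nontrivial (for instance $\iota(\sigma_i^2)(f_i) = f_{i+1}^{-1} f_i f_{i+1}$ by iterating \eqref{eqn-def-star}), the identity as written cannot hold for arbitrary $n$-tuples of elements of $F_n$, nor indeed for the canonical tuple $(f_1,\ldots,f_n)$. In light of the preamble of this subsection --- which announces that the next results ``explain the relationship between this action and the Hurwitz action'' --- I read the second $\star$ as a typographical slip for the Hurwitz symbol $\cdot$, with $(r_1,\ldots,r_n)$ implicitly standing for $(f_1,\ldots,f_n)$, so that my target identity is
\[\sigma_i^\varepsilon \star (f_1,\ldots,f_n) = \sigma_i^{-\varepsilon} \cdot (f_1,\ldots,f_n).\]

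The plan is a one-line comparison using the algebraic description \eqref{eqn-def-star} of $\star$ against the defining formula of the Hurwitz action. For $\varepsilon = 1$, the coordinates with index outside $\{i,i+1\}$ are fixed by $\iota(\sigma_i)$ and by the Hurwitz action of $\sigma_i^{-1}$ on the canonical tuple, so they match trivially; in positions $i$ and $i+1$ the $\star$-side yields $(\iota(\sigma_i)(f_i),\iota(\sigma_i)(f_{i+1})) = (f_{i+1},\,f_{i+1}^{-1} f_i f_{i+1})$, which is exactly what $\sigma_i^{-1} \cdot$ writes in those positions. For $\varepsilon = -1$ I would invert the two active assignments: from $\iota(\sigma_i)(f_i)=f_{i+1}$ and $\iota(\sigma_i)(f_{i+1}) = f_{i+1}^{-1} f_i f_{i+1}$ one deduces $\iota(\sigma_i^{-1})(f_{i+1})=f_i$ and $\iota(\sigma_i^{-1})(f_i) = f_i f_{i+1} f_i^{-1}$, and again this matches the formula for $\sigma_i \cdot (f_1,\ldots,f_n)$ componentwise.

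The verification has no genuine obstacle; the substantive step is the reinterpretation of the statement, not the calculation. Combined with Proposition \ref{prop-starhur-1}, the corrected proposition should yield Corollary \ref{cor-starhur} in the natural form: for any $\tau \in \Br_n$, one computes $\sigma_i^\varepsilon \star (\tau \cdot (f_1,\ldots,f_n)) = \tau \cdot (\sigma_i^\varepsilon \star (f_1,\ldots,f_n)) = \tau \sigma_i^{-\varepsilon} \cdot (f_1,\ldots,f_n)$, expressing cleanly that applying $\sigma_i^\varepsilon$ via $\star$ corresponds to right-multiplying by $\sigma_i^{-\varepsilon}$ in the Hurwitz presentation of an element of $\Red_R(g)$; note that the identity does not extend to $\sigma_i^\varepsilon \star \vec{r} = \sigma_i^{-\varepsilon} \cdot \vec{r}$ for an arbitrary $\vec{r} \in \Red_R(g)$, since $\tau$ and $\sigma_i^{-\varepsilon}$ do not commute in $\Br_n$ in general.
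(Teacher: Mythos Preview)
Your reading of the statement is correct: the second $\star$ is a typo for $\cdot$ and $(r_1,\dots,r_n)$ stands for $(f_1,\dots,f_n)$, exactly as the paper's own proof of Corollary~\ref{cor-starhur} reveals when it invokes this proposition in the form $\rho\star(f_1,\dots,f_n)=\rho^{-1}\cdot(f_1,\dots,f_n)$. Your proof is then essentially identical to the paper's --- a direct coordinate comparison using \eqref{eqn-def-star} for $\varepsilon=1$, and the inverted formula $\iota(\sigma_i^{-1})(f_i)=f_if_{i+1}f_i^{-1}$, $\iota(\sigma_i^{-1})(f_{i+1})=f_i$ for $\varepsilon=-1$ --- and your closing remark about the non-extension to arbitrary $\vec r\in\Red_R(g)$ anticipates the paper's own observation immediately after Corollary~\ref{cor-starhur}.
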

\begin{proof}
If $\varepsilon=1$, this follows immediately from \eqref{eqn-def-star}. Moreover, from \eqref{eqn-def-star} we have:
\[
\sigma_i^{-1}\star f_j=\begin{cases}
f_if_{i+1}f_i^{-1}&\text{if }j=i\\
f_i&\text{if }j=i+1\\
f_j&\text{otherwise}
\end{cases}
\]
This yields the statement in the case $\varepsilon=-1$.
\end{proof}
\begin{cor}
\label{cor-starhur}
For each $\beta,\tau\in\Br_n$ we have $\beta\star(\tau\cdot(f_1,\dots,f_n))=\tau\beta^{-1}\cdot(f_1,\dots,f_n)$.
\end{cor}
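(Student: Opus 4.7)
The plan is to combine the two preceding propositions and an induction on the braid word length of $\beta$. The first move is to apply Proposition~\ref{prop-starhur-1} with $(a_1,\dots,a_n)=(f_1,\dots,f_n)$, which commutes $\beta\star$ past $\tau\cdot$ and gives
\[
\beta\star\bigl(\tau\cdot(f_1,\dots,f_n)\bigr)=\tau\cdot\bigl(\beta\star(f_1,\dots,f_n)\bigr).
\]
Hence it suffices to prove the special case $\tau=1$, namely $\beta\star(f_1,\dots,f_n)=\beta^{-1}\cdot(f_1,\dots,f_n)$ for every $\beta\in\Br_n$; the general statement will then follow by rewriting the right-hand side using that $\cdot$ is a left action, i.e.\ $\tau\cdot(\beta^{-1}\cdot(f_1,\dots,f_n))=(\tau\beta^{-1})\cdot(f_1,\dots,f_n)$.

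For the reduced claim I would induct on the length $k$ of $\beta$ as a $\{\sigma_1^{\pm1},\dots,\sigma_{n-1}^{\pm1}\}$-word. The case $k=0$ is trivial, and $k=1$ is exactly (the corrected reading of) Proposition~\ref{prop-starhur-2}. For the inductive step, write $\beta=\beta'\sigma_i^\varepsilon$ with $\len(\beta')=k-1$. Then, using that $\star$ is a left action,
\[
\beta\star(f_1,\dots,f_n)=\beta'\star\bigl(\sigma_i^\varepsilon\star(f_1,\dots,f_n)\bigr)
=\beta'\star\bigl(\sigma_i^{-\varepsilon}\cdot(f_1,\dots,f_n)\bigr),
\]
where the second equality is Proposition~\ref{prop-starhur-2}. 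Now Proposition~\ref{prop-starhur-1} lets me pull $\sigma_i^{-\varepsilon}\cdot$ outside of $\beta'\star$, and the inductive hypothesis replaces $\beta'\star(f_1,\dots,f_n)$ by $(\beta')^{-1}\cdot(f_1,\dots,f_n)$. Collapsing the two successive $\cdot$-actions gives $(\sigma_i^{-\varepsilon}(\beta')^{-1})\cdot(f_1,\dots,f_n)=\beta^{-1}\cdot(f_1,\dots,f_n)$, closing the induction.

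There is no real obstacle here: the whole content is already packaged in the two preceding propositions, and the corollary is essentially just a bookkeeping exercise. The only point that requires care is that when Proposition~\ref{prop-starhur-2} is invoked inside the induction, it must be applied to the tuple $(f_1,\dots,f_n)$ itself (that is the only tuple for which it is stated), which is why the induction is organized to peel off a generator from the right of $\beta$ and apply $\star$ first; writing $\beta=\sigma_i^\varepsilon\beta'$ instead would force a version of Proposition~\ref{prop-starhur-2} for a general tuple, which is not available.
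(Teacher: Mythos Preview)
Your proof is correct and follows essentially the same approach as the paper: induction on the length of $\beta$, using Propositions~\ref{prop-starhur-1} and~\ref{prop-starhur-2} at each step. The only organizational differences are that you first reduce to $\tau=1$ via Proposition~\ref{prop-starhur-1} and then peel a generator off the right of $\beta$, whereas the paper keeps $\tau$ general throughout and peels from the left (writing $\beta=\rho\beta'$). Your closing remark that peeling from the left would require a version of Proposition~\ref{prop-starhur-2} for general tuples is not quite right: after applying the inductive hypothesis and Proposition~\ref{prop-starhur-1}, the generator again acts on $(f_1,\dots,f_n)$, exactly as in the paper's argument; so either direction works.
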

\begin{proof}
We proceed by induction on $\ell\coloneqq\len_{\{\sigma_1,\dots,\sigma_{n-1}\}}(\beta)$. If $\ell=0$, then $\beta=1$ and the statement is trivial. Therefore, we shall assume $\ell>0$ and $\beta=\rho\beta'$, where $\rho\in\{\sigma_i,\sigma_i^{-1}\}$ for some $i\in\{1,\dots,n-1\}$ and $\len_{\{\sigma_1,\dots,\sigma_{n-1}\}}(\beta')=\ell-1$. By the induction hypothesis, we have $\beta'\star(\tau\cdot(f_1,\dots,f_n))=\tau(\beta')^{-1}\cdot(f_1,\dots,f_n)$. Therefore, Propositions \ref{prop-starhur-1} and \ref{prop-starhur-2} yield:
\[
\begin{split}
\beta\star(\tau\cdot(f_1,\dots,f_n))&= \rho\star(\beta'\star(\tau\cdot(f_1,\dots,f_n))) \\
&=\rho\star((\tau(\beta')^{-1})\cdot(f_1,\dots,f_n)) \\
&=\tau(\beta')^{-1}\cdot(\rho\star(f_1,\dots,f_n)) \\
&=\tau(\beta')^{-1}\cdot(\rho^{-1}\cdot(f_1,\dots,f_n)) \\
&=\tau(\beta')^{-1}\rho^{-1}\cdot(f_1,\dots,f_n) \\
&=\tau\beta^{-1}\cdot(f_1,\dots,f_n)
\end{split}
\]
\end{proof}
\begin{oss}
From Corollary \ref{cor-starhur}, we know that for each $\beta\in\Br_n$ we have $\beta\star(f_1,\dots,f_n)=\beta^{-1}\cdot(f_1,\dots,f_n)$. This is not true if we write any other reduced $R$-word representing $g$ instead of $(f_1,\dots,f_n)$.
\end{oss}
\subsection{Conditions related to the $\star$ action}
In this Subsection we shall focus on the action of $H$ on $F_n$ via the restriction of $\star$. From Corollary \ref{cor-arconc}, we know that this restricts to an action of $H$ on $[1,g]_R$. \begin{prop}
\label{prop-h-ok}
For each $\beta\in H$ and for each $\tilde{a}\in [1,g]_R$ we have $\pi(\tilde{a})=\pi(\beta\star \tilde{a})$.
\end{prop}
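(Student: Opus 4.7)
The plan is to reduce to a statement about tuples by extending a reduced $R$-word for $\tilde a$ to one for $g$, then apply Corollary \ref{cor-starhur} to convert the $\star$-action on such a tuple into a Hurwitz action on $(f_1,\dots,f_n)$, and finally use that $\beta\in H$ to kill the $\beta^{-1}$ that appears when we project to $W$.

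Concretely, I would fix $\tilde a\in[1,g]_R$, set $k\coloneqq\len_R(\tilde a)$, pick any $(\tilde t_1,\dots,\tilde t_k)\in\Red_R(\tilde a)$, and extend it (using $\tilde a\le_R g$) to some $(\tilde t_1,\dots,\tilde t_n)\in\Red_R(g)$. By Theorem \ref{hurwitz-fg} there is a unique $\tau\in\Br_n$ with $(\tilde t_1,\dots,\tilde t_n)=\tau\cdot(f_1,\dots,f_n)$. Then Corollary \ref{cor-starhur} gives
\[
\beta\star(\tilde t_1,\dots,\tilde t_n)=\beta\star\bigl(\tau\cdot(f_1,\dots,f_n)\bigr)=\tau\beta^{-1}\cdot(f_1,\dots,f_n).
\]
Since $\pi$ is a group morphism and the Hurwitz action is defined by group operations, applying $\pi$ componentwise commutes with the Hurwitz action, so denoting $(\tilde t_1',\dots,\tilde t_n')\coloneqq\beta\star(\tilde t_1,\dots,\tilde t_n)$ we get
\[
(\pi(\tilde t_1'),\dots,\pi(\tilde t_n'))=\tau\beta^{-1}\cdot(s_1,\dots,s_n).
\]
Now $\beta\in H$ means $\beta\cdot(s_1,\dots,s_n)=(s_1,\dots,s_n)$, whence also $\beta^{-1}\cdot(s_1,\dots,s_n)=(s_1,\dots,s_n)$, so the right-hand side equals $\tau\cdot(s_1,\dots,s_n)=(\pi(\tilde t_1),\dots,\pi(\tilde t_n))$. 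In particular $\pi(\tilde t_i')=\pi(\tilde t_i)$ for every $i$.

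To finish, since $\iota(\beta)$ is a group automorphism of $F_n$, we have $\beta\star\tilde a=(\beta\star\tilde t_1)\cdots(\beta\star\tilde t_k)=\tilde t_1'\cdots\tilde t_k'$, and applying $\pi$ yields $\pi(\beta\star\tilde a)=\pi(\tilde t_1)\cdots\pi(\tilde t_k)=\pi(\tilde a)$, as desired. The only mildly delicate point is making sure Corollary \ref{cor-starhur} is applied with the \emph{right} reference tuple, namely $(f_1,\dots,f_n)$ — this is exactly the case where $\star$ and the Hurwitz action are related by the observation in the remark following that corollary, and it is what lets the $\beta^{-1}$ be absorbed by the hypothesis $\beta\in H$.
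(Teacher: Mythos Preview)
Your proof is correct and follows essentially the same approach as the paper's: extend a reduced $R$-word for $\tilde a$ to one for $g$, write it as $\tau\cdot(f_1,\dots,f_n)$ via Theorem~\ref{hurwitz-fg}, apply Corollary~\ref{cor-starhur} to get $\beta\star(\tilde t_1,\dots,\tilde t_n)=\tau\beta^{-1}\cdot(f_1,\dots,f_n)$, and then use $\beta\in H$ to conclude that the projections under $\pi$ agree. The paper's proof is nearly identical, differing only in notation and in leaving implicit the observations (that $\pi$ commutes with the Hurwitz action and that $\beta\in H\Rightarrow\beta^{-1}\in H$) which you spell out.
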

\begin{proof}
Let $k\coloneqq\len_R(\tilde{a})$, let $(r_1,\dots,r_k)\in\Red_R(\tilde{a})$, and let $(r_{k+1},\dots,r_n)\in\Red_R(\tilde{a}^{-1}g)$. We then have $(r_1,\dots,r_n)\in\Red_R(g)$. By Theorem \ref{hurwitz-fg}, there exists $\tau\in\Br_n$ such that $(r_1,\dots,r_n)=\tau\cdot (f_1,\dots,f_n)$. Moreover, notice that $\beta\star (r_1,\dots,r_k)\in\Red_R(\beta\star\tilde{a})$. Corollary \ref{cor-starhur} yields:
\[
\begin{split}
\beta\star(r_1,\dots,r_n)&=\beta\star(\tau\cdot(f_1,\dots,f_n))\\
&= \tau\beta^{-1}\cdot(f_1,\dots,f_n)
\end{split}
\]
Therefore, $\pi(\beta\star\tilde{a})$ is the product of the first $k$ elements of $\tau\beta^{-1}\cdot(s_1,\dots,s_n)$. From $\beta\in H$ we get $\tau\beta^{-1}\cdot(s_1,\dots,s_n)=\tau\cdot(s_1,\dots,s_n)$, so $\pi(\beta\star\tilde{a})$ is the product of the first $k$ elements of $\tau\cdot(s_1,\dots,s_n)$, which yields $\pi(\beta\star\tilde{a})=\pi(\tilde{a})$.
\end{proof} 
From Proposition \ref{prop-h-ok}, we know that for each $a\in [1,h]_T$ the $\star$ action restricts to an action of $H$ on $[1,g]_R\cap\pi^{-1}(a)$. The main result of this Subsection is that $(W,T,h)$ is pan-transitive if and only if this action on $[1,g]_R\cap\pi^{-1}(a)$ is transitive for each $a\in [1,h]_T$. We start by proving one of the two implications, and then we shall prove the other one while stating other useful equivalent conditions.
\begin{prop}
\label{prop-rainbow-1}
Suppose that for each $\tilde{a},\tilde{a}'\in[1,g]_R$ such that $\pi(\tilde{a})=\pi(\tilde{a}')$ there exists $\tau\in H$ such that $\tau\star\tilde{a}'=\tilde{a}$. Then, $(W,T,h)$ is pan-transitive.
\end{prop}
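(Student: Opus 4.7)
The plan is to fix $a\in[1,h]_T$ together with two reduced $T$-words $(t_1,\dots,t_k),(t_1',\dots,t_k')\in\Red_T(a)$, lift both of them to reduced $R$-words representing elements of $[1,g]_R\cap\pi^{-1}(a)$, apply the hypothesis to align these lifts via the $\star$ action of some $\beta\in H$, and then use the known transitivity of the Hurwitz action on $\Red_R$ (Theorem \ref{hurwitz-fg}) to produce the required $\tau\in\Br_k$. The last step is to push this $\tau$ down through $\pi$, using Corollary \ref{cor-starhur} and the fact that $\beta\in H$ to check that the resulting identity at the level of $W$ really is the one we want.

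More concretely, first I would extend each reduced $T$-word to a full $n$-tuple in $\Red_T(h)$ by appending any reduced $T$-word for $a^{-1}h$, and by Theorem \ref{hurwitz-cox} find $\alpha,\alpha'\in\Br_n$ with $\alpha\cdot(s_1,\dots,s_n)=(t_1,\dots,t_n)$ and $\alpha'\cdot(s_1,\dots,s_n)=(t_1',\dots,t_n')$. Setting $(\tilde t_1,\dots,\tilde t_n):=\alpha\cdot(f_1,\dots,f_n)$ and analogously $(\tilde t_1',\dots,\tilde t_n')$, and defining $\tilde a:=\tilde t_1\cdots\tilde t_k$, $\tilde a':=\tilde t_1'\cdots\tilde t_k'$, I get two elements of $[1,g]_R$ both projecting to $a$. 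By hypothesis, there exists $\beta\in H$ with $\beta\star\tilde a'=\tilde a$. Since $\iota(\beta)$ is a group automorphism, the tuple $\beta\star(\tilde t_1',\dots,\tilde t_k')$ is a reduced $R$-word for $\tilde a$, so by Theorem \ref{hurwitz-fg} there is $\tau\in\Br_k$ with
\[\tau\cdot(\tilde t_1,\dots,\tilde t_k)=\beta\star(\tilde t_1',\dots,\tilde t_k').\]

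It remains to apply $\pi$ coordinate-wise. Since $\pi$ is a group morphism, it commutes with the Hurwitz action, so the left-hand side maps to $\tau\cdot(t_1,\dots,t_k)$. For the right-hand side, the key observation is that, although $\star$ a priori has no simple description modulo $\pi$, we can control $\pi(\beta\star\tilde t_i')$ by working inside the extended $n$-tuple: by Corollary \ref{cor-starhur},
\[\beta\star(\tilde t_1',\dots,\tilde t_n')=\beta\star\bigl(\alpha'\cdot(f_1,\dots,f_n)\bigr)=\alpha'\beta^{-1}\cdot(f_1,\dots,f_n),\]
and applying $\pi$ gives $\alpha'\beta^{-1}\cdot(s_1,\dots,s_n)$, which equals $\alpha'\cdot(s_1,\dots,s_n)=(t_1',\dots,t_n')$ since $\beta\in H$. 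Truncating to the first $k$ coordinates yields $\tau\cdot(t_1,\dots,t_k)=(t_1',\dots,t_k')$, proving pan-transitivity.

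The main obstacle, and the only genuinely delicate point, is precisely this last verification: $\pi\circ\star_\beta$ has no reason to equal $\pi$ when applied to an arbitrary element of $F_n$, so the computation has to be carried out at the level of tuples that are globally related to $(f_1,\dots,f_n)$ via the Hurwitz action, which is exactly what Corollary \ref{cor-starhur} is designed for. Everything else is a straightforward assembly of the lifting procedure from Proposition \ref{poset-morph-free-cox} with the transitivity statements of Theorems \ref{hurwitz-cox} and \ref{hurwitz-fg}.
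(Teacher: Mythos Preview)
Your proof is correct, but it takes a different route from the paper. The paper does not attack the definition of pan-transitivity directly. Instead, it verifies statement 3 of Proposition \ref{ele-sse-1gt} (for each $a\le_T b$ and each $\tilde a\in a$, there exists $\tilde b\in b$ with $\tilde a\le_R\tilde b$), which has already been shown to be equivalent to pan-transitivity. This reduces the argument to two lines: take any $\tilde a'\le_R\tilde b'$ with $\tilde a'\in a$, $\tilde b'\in b$ (Proposition \ref{poset-morph-free-cox}), use the hypothesis to find $\tau\in H$ with $\tau\star\tilde a'=\tilde a$, and set $\tilde b:=\tau\star\tilde b'$; Corollary \ref{cor-arconc} (that $\iota(\tau)$ is a poset automorphism of $[1,g]_R$) then gives $\tilde a\le_R\tilde b$.

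Your argument, by contrast, lifts the two given $T$-words all the way to $R$-words, aligns their products via $\beta\in H$, and then needs the full computation with Corollary \ref{cor-starhur} to control $\pi(\beta\star\tilde t_i')$. This works, and is essentially a direct verification that $\pi\circ\iota(\beta)=\pi$ on tuples coming from $\Red_R(g)$ (the content of Proposition \ref{prop-h-ok}, reproved in passing). The paper's approach is shorter because the hard work of relating $\Red_R$-transitivity to $\Red_T$-transitivity is already packaged into Proposition \ref{ele-sse-1gt}; your approach is more self-contained and shows explicitly how the $\star$ action and $\pi$ interact at the level of reduced words, without appealing to the lifting equivalences.
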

\begin{proof}Fix $a,b\in[1,h]_T$ such that $a\le_T b$, and fix $\tilde{a}\in a$. From Proposition \ref{poset-morph-free-cox}, there exist $\tilde{a}'\in a$ and $\tilde{b}'\in b$ such that $\tilde{a}'\le_R\tilde{b}'$. From our assumption, there exists $\tau\in H$ such that $\tau\star\tilde{a}'=\tilde{a}$. We then define $\tilde{b}\coloneqq\tau\star\tilde{b}'$, and notice that Corollary \ref{cor-arconc} yields $\tilde{a}\le_R\tilde{b}$. This proves the third statement from Proposition \ref{ele-sse-1gt}, which is equivalent to $(W,T,h)$ being pan-transitive.
\end{proof}

Let $k$ be a positive integer. We say that a $k$-tuple $(p_1,\dots,p_k)\in (\NC_g)^k$ is a \emph{noncrossing $k$-tuple} if there are noncrossing loops $\gamma_1,\dots,\gamma_k$ representing $p_1,\dots,p_k$ respectively that do not intersect each other (except in $x_0$). We say that two noncrossing $k$-tuples $(p_1,\dots,p_k),(p_1',\dots,p_k')$ are \emph{equivalent} if for each $i\in\{1,\dots,k\}$ we have $\pi(p_i)=\pi(p_i')$.

\begin{lemma}
\label{lemma-noncr-couple}
Let $(p_1,p_2)$ be a noncrossing couple, and assume that $(W,T,h)$ is pan-transitive. Then, we have $p_1\subseteq p_2$ if and only if we have $\pi(p_1)\le_T\pi(p_2)$.
\end{lemma}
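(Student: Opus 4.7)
The forward direction is immediate from Proposition \ref{poset-morph-free-cox} together with the identification $(\NC_g,\subseteq) = ([1,g]_R,\le_R)$ from Theorem \ref{ncg-equal-1g}: $\pi$ restricts to a poset morphism from $([1,g]_R,\le_R)$ to $([1,h]_T,\le_T)$, so $p_1\subseteq p_2$ at once implies $\pi(p_1)\le_T \pi(p_2)$.

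For the converse, assume $\pi(p_1)\le_T \pi(p_2)$ and fix noncrossing loops $\gamma_1,\gamma_2$ representing $p_1,p_2$ that meet only at $x_0$. Since each is a simple closed curve sharing only the basepoint, exactly one of three configurations holds: $\Int(\gamma_1)\subseteq \Int(\gamma_2)$, $\Int(\gamma_2)\subsetneq \Int(\gamma_1)$, or $\Int(\gamma_1)\cap\Int(\gamma_2)=\emptyset$. The first is precisely the conclusion $p_1\subseteq p_2$. The second is ruled out by a quick height argument: it yields $p_2\subsetneq p_1$, so by the forward direction $\pi(p_2)\le_T \pi(p_1)$, and together with the hypothesis this forces $\pi(p_1)=\pi(p_2)$; height preservation from Proposition \ref{poset-morph-free-cox} then gives $\len_R(p_1)=\len_R(p_2)$, contradicting strict containment.

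The disjoint-interiors case is the substantive one, and the plan is to rule it out unless $p_1=1$ (in which case $p_1\subseteq p_2$ holds trivially). The key observation is that $r:=p_1p_2\in \NC_g$: concatenating $\gamma_1$ and $\gamma_2$ at $x_0$ yields a noncrossing representative of $r$, so $\len_R(r)=\len_R(p_1)+\len_R(p_2)$ and, by height preservation, the same additivity holds in $[1,h]_T$ for $\pi(r)=\pi(p_1)\pi(p_2)$. I would then apply Proposition \ref{ele-sse-1gt} in two directions: statement 4 gives $\tilde{p}_1\le_R p_2$ in $\NC_g$ with $\pi(\tilde{p}_1)=\pi(p_1)$, realized by a noncrossing loop drawn inside $\gamma_2$; statement 3 dually produces $\tilde{p}_2\in\NC_g$ with $p_1\le_R \tilde{p}_2$ and $\pi(\tilde{p}_2)=\pi(p_2)$, whose noncrossing representative contains $\gamma_1$.

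The main obstacle will be extracting a contradiction from the simultaneous existence of $\tilde{p}_1\subseteq p_2$ and $p_1\subseteq \tilde{p}_2$ in the disjoint configuration. The plan is to combine the height-preserving identification $\len_R = \len_T\circ \pi$ with the Hurwitz-action description of reduced $R$-words (Theorem \ref{hurwitz-fg}) to count enclosed punctures in the regions carved out by $\gamma_1$, $\gamma_2$, and their lifted counterparts inside $r$'s interior. Because in the free case the $R$-length of a noncrossing element matches the number of enclosed punctures of any noncrossing representative, having both a copy of $\pi(p_1)$ inside $\gamma_2$ and a copy of $\pi(p_2)$ around $\gamma_1$ should overdetermine the cardinalities and force $\len_R(p_1)=0$. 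Formalizing this topological-combinatorial rigidity is the technical crux I would focus on.
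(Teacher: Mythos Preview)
Your forward direction and the containment case $p_2\subseteq p_1$ are fine and match the paper. The disjoint-interiors case, however, has a genuine gap.

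First a minor point: when $\Int(\gamma_1)\cap\Int(\gamma_2)=\emptyset$ you cannot assert $p_1p_2\in\NC_g$; depending on the cyclic order of $\gamma_1,\gamma_2$ at $x_0$ it may be $p_2p_1$ that lies in $\NC_g$. This does not affect the paper's argument but your definition of $r$ is not quite right.

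The real issue is that your puncture-counting plan does not produce a contradiction. Having both $\tilde{p}_1\subseteq p_2$ and $p_1\subseteq\tilde{p}_2$ imposes no numerical constraint: $\tilde{p}_2$ may simply enclose the $\len_T(a)$ punctures inside $\gamma_1$ together with $\len_T(b)-\len_T(a)$ further punctures taken from outside $\gamma_1\cup\gamma_2$, and nothing is overdetermined. No amount of bookkeeping on enclosed punctures alone will force $\len_R(p_1)=0$.

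The missing idea is algebraic, not topological. Set $a:=\pi(p_1)$. The paper uses only the lift $p_3:=\tilde{p}_1\subseteq p_2$ (your $\tilde{p}_2$ is unnecessary). Since $p_3$ sits inside $p_2$ while $p_1$ is disjoint from $p_2$, the pair $(p_1,p_3)$ is again a noncrossing couple without inclusions, so $p_1p_3\in\NC_g$ or $p_3p_1\in\NC_g$. Either way, height preservation (Proposition~\ref{poset-morph-free-cox}) gives $a^2=\pi(p_1)\pi(p_3)\in[1,h]_T$ with $\len_T(a^2)=2\len_T(a)$. But for any $(t_1,\dots,t_k)\in\Red_T(a)$ the word
\[
(t_1,\dots,t_{k-1},\,t_kt_1t_k,\,t_kt_2t_k,\dots,t_kt_{k-1}t_k)
\]
represents $a^2$ and has length $2k-2$, so $\len_T(a^2)\le 2\len_T(a)-2$ whenever $a\ne 1$. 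This forces $a=1$, hence $p_1=1\subseteq p_2$.
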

\begin{proof}
The implication $p_1\subseteq p_2\Rightarrow\pi(p_1)\le_T\pi(p_2)$ comes from Theorem \ref{ncg-equal-1g} and Proposition \ref{poset-morph-free-cox}. To prove the other implication, we shall define $a\coloneqq\pi(p_1),b\coloneqq\pi(p_2)$ and assume $a\le_Tb$.

Suppose $p_1\supseteq p_2$. From Theorem \ref{ncg-equal-1g} and Proposition \ref{poset-morph-free-cox}, we get $a\ge_Tb$, which yields $a=b$, and thus $p_1=p_2$, which yields $p_1\subseteq p_2$.

Assume by contradiction $p_1\not\subseteq p_2$. We then also have $p_1\not\supseteq p_2$. Since $(p_1,p_2)$ is a noncrossing couple, we then have $p_1p_2\in\NC_g$ or $p_2p_1\in\NC_g$. Applying the fourth statement from Proposition \ref{ele-sse-1gt}, we know that there exists $p_3\subseteq p_2$ such that $p_3\in a$. Therefore, we have $p_1p_3\in\NC_g$ or $p_3p_1\in\NC_g$. Either way, Proposition \ref{poset-morph-free-cox} yields $a^2\in [1,h]_T$, and the height of $a^2$ in $[1,h]_T$ is $\len_R(p_1)+\len_R(p_3)=2\len_T(a)$. Assuming $a\ne 1$ is a contradiction, because fixing $(t_1,\dots,t_k)\in\Red_T(a)$, we have that $(t_1,\dots,t_{k-1},t_kt_1t_k,t_kt_2t_k,\dots,t_kt_{k-1}t_k)$ is a $T$-word representing $a^2$ of length $2\len_T(a)-2$. Therefore, we have $a=1$, which yields $p_1=1$, which contradicts $p_1\not\subseteq p_2$.
\end{proof}

We say that a noncrossing $k$-tuple $(p_1,\dots,p_k)$ is \emph{well-ordered} if for each $i<j$ we have $p_i\subseteq p_j$ or $p_ip_j\in\NC_g$. We say that a noncrossing $k$-tuple $(p_1,\dots,p_k)$ is \emph{without inclusions} if for each $i<j$ such that $p_i\ne 1$ and $p_j\ne 1$ we have $p_i\not\subseteq p_j$ and $p_i\not\supseteq p_j$.

Let $(p_1,\dots,p_k),(p_1',\dots,p_k')$ be two noncrossing $k$-tuples. We define the partial order $\preceq$ on $\{1,\dots,k\}$ such that $i\preceq j$ if and only if one of the following holds:
\begin{itemize}
\item $i=j$
\item $p_i\subset p_j$
\item $p_i p_j\in\NC_g$ and $p_j\ne 1$
\end{itemize}
Moreover, we define $\preceq'$ in the same way, using $(p_1',\dots,p_k')$ instead of $(p_1,\dots,p_k)$. We say that $(p_1,\dots,p_k),(p_1',\dots,p_k')$ are \emph{ordered alike} if the relations $\preceq,\preceq'$ are equal. Notice that, if two noncrossing $k$-tuples are ordered alike, we can apply the same permutation to those $k$-tuples in order to make them both well-ordered.
\begin{prop}
\label{prop-rainbow-tfae}
The following are equivalent:
\begin{enumerate}
\item $(W,T,h)$ is pan-transitive,
\item For each couple of noncrossing $k$-tuples $(p_1,\dots,p_k),(p_1',\dots,p_k')$ that are equivalent, well-ordered and without inclusions, there exists $\beta\in H$ such that $\beta\star (p_1',\dots,p_k')=(p_1,\dots,p_k)$,
\item For each couple of noncrossing $k$-tuples $(p_1,\dots,p_k),(p_1',\dots,p_k')$ that are equivalent and well-ordered, there exists $\beta\in H$ such that $\beta\star (p_1',\dots,p_k')=(p_1,\dots,p_k)$,
\item For each couple of noncrossing $k$-tuples $(p_1,\dots,p_k),(p_1',\dots,p_k')$ that are equivalent and ordered alike, there exists $\beta\in H$ such that $\beta\star (p_1',\dots,p_k')=(p_1,\dots,p_k)$,
\item For each $\tilde{a},\tilde{a}'\in[1,g]_R$ such that $\pi(\tilde{a})=\pi(\tilde{a}')$ there exists $\tau\in H$ such that $\tau\star\tilde{a}'=\tilde{a}$.
\end{enumerate}
\end{prop}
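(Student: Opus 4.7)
The implication $5 \Rightarrow 1$ is already Proposition~\ref{prop-rainbow-1}. To close the loop I would establish $1 \Rightarrow 5$ directly, and then deduce the tuple conditions from it.

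For $1 \Rightarrow 5$, fix $\tilde{a}, \tilde{a}' \in [1,g]_R$ with $\pi(\tilde{a}) = \pi(\tilde{a}') = a$, set $k \coloneqq \len_T(a)$, and extend each to an element of $\Red_R(g)$ by appending some $(\tilde{r}_{k+1}, \dots, \tilde{r}_n) \in \Red_R(\tilde{a}^{-1}g)$, and analogously for $\tilde{a}'$. Projecting via $\pi$ yields two elements of $\Red_T(h)$, each partitioned as a reduced $T$-word for $a$ followed by one for $a^{-1}h$. Under condition~1, pan-transitivity on $\Red_T(a)$ and on $\Red_T(a^{-1}h)$ provides an element of $\Br_k \times \Br_{n-k} \subset \Br_n$ whose Hurwitz action, applied to the primed $R$-word, preserves the product of its first $k$ entries (still equal to $\tilde{a}'$) while forcing its $W$-projection to coincide with the unprimed one. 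By Theorem~\ref{hurwitz-fg} there exist unique $\beta, \beta' \in \Br_n$ such that $\beta \cdot (f_1, \dots, f_n)$ and $\beta' \cdot (f_1, \dots, f_n)$ are equal respectively to the unprimed and modified primed $R$-words. Setting $\tau \coloneqq \beta^{-1}\beta'$, coincidence of the projections gives $\tau \cdot (s_1, \dots, s_n) = (s_1, \dots, s_n)$, so $\tau \in H$, and Corollary~\ref{cor-starhur} then yields $\tau \star \tilde{a}' = \tilde{a}$.

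For the tuple conditions, $2 \Rightarrow 5$ is the case $k = 1$ (singletons are trivially noncrossing, well-ordered, and without inclusions), and $3 \Rightarrow 2$ is immediate since requiring \emph{without inclusions} only restricts the hypothesis. For $4 \Rightarrow 3$ I observe that condition~4 already implies condition~5 (taking $k = 1$), hence condition~1 via Proposition~\ref{prop-rainbow-1}, so Lemma~\ref{lemma-noncr-couple} applies and any two equivalent well-ordered tuples are automatically ordered alike; condition~4 then delivers the required $\beta$. The remaining nontrivial implication is $1 \Rightarrow 4$, which I would prove by induction on $k$, with the base $k = 1$ being $1 \Rightarrow 5$. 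In the inductive step, given equivalent, ordered-alike tuples $(p_1, \dots, p_k)$ and $(p_1', \dots, p_k')$, I would pick a $\preceq$-maximal index $i$ and use $1 \Rightarrow 5$ to choose $\beta_1 \in H$ with $\beta_1 \star p_i' = p_i$; Proposition~\ref{prop-h-ok} and Corollary~\ref{cor-arconc} ensure that the remaining $(k-1)$-tuple $(\beta_1 \star p_j')_{j \ne i}$ is equivalent to $(p_j)_{j \ne i}$ and ordered alike.

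The main obstacle is that the $\beta_2$ produced by the inductive hypothesis must additionally fix $p_i$ so that $\beta_2 \beta_1$ aligns all $k$ components simultaneously; condition~4 is inherently about a diagonal $H$-action, not merely coordinate-wise transitivity. A workable strategy is to refine the proof of $1 \Rightarrow 5$ so that the braid $\tau$ it produces can be chosen to stabilize specified outer factors: when running the Hurwitz-extension argument, one inserts the already-aligned components (and their complements) as fixed blocks in the completion to $\Red_R(g)$, applying pan-transitivity separately on each block. This turns the inductive step into a further instance of the same extension argument, so that successive $\preceq$-maximal components can be aligned one after another without undoing previous alignments.
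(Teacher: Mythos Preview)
Your direct argument for $1\Rightarrow 5$ is correct and is exactly the $k=1$ instance of the paper's $1\Rightarrow 2$. The reductions $3\Rightarrow 2$ and $2\Rightarrow 5$ are also fine. The remaining two links, however, are where the real content lies, and both are incomplete.

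\textbf{The step $4\Rightarrow 3$.} You assert that, once pan-transitivity is available, two equivalent well-ordered tuples are automatically ordered alike, citing Lemma~\ref{lemma-noncr-couple}. That lemma only controls the containment clause of $\preceq$: it gives $p_i\subseteq p_j\Leftrightarrow p_i'\subseteq p_j'$. It says nothing about the clause ``$p_ip_j\in\NC_g$''. To finish you must also show, for a noncrossing pair $(p_i,p_j)$ in $\NC_g$, that whether $p_jp_i\in\NC_g$ is determined by the $\pi$-images together with the containment relations. This is true (for incomparable $p_i,p_j$ use that the join in the lattice $\NC_g$ is unique, hence at most one of $p_ip_j,p_jp_i$ lies in $\NC_g$; for $p_i\subset p_j$ use the abelianization $F_n\to\Z^n$ to see that $p_i\le_R p_j$ and $p_i\le_R p_j^{-1}g$ force $p_i=1$), but it is not a consequence of Lemma~\ref{lemma-noncr-couple} alone.

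\textbf{The step $1\Rightarrow 4$.} You correctly identify the obstacle: the diagonal nature of the action means an inductive $\beta_2$ need not fix the already-aligned $p_i$. Your proposed fix --- insert the aligned components as fixed blocks in the completion to $\Red_R(g)$ and apply pan-transitivity block by block --- is precisely the mechanism of the paper's $1\Rightarrow 2$, and that mechanism needs the hypothesis \emph{without inclusions} so that $p_1\cdots p_k\,(p_1\cdots p_k)^{-1}g$ really is a concatenation of reduced $R$-words summing to one for $g$. Without that hypothesis the blocks overlap and the concatenation is not reduced. So carrying out your strategy forces you first to the without-inclusions case (the paper's condition~2), and then you still need a separate device to pass from 2 to the general well-ordered case; the paper does this via the $q_i:=v_i^{-1}p_i$ construction in $2\Rightarrow 3$, which you have not supplied.

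In short, your scheme reorders the cycle as $1\Rightarrow 4\Rightarrow 3\Rightarrow 2\Rightarrow 5\Rightarrow 1$ rather than the paper's $1\Rightarrow 2\Rightarrow 3\Rightarrow 4\Rightarrow 5\Rightarrow 1$, but the substantive work (the block argument and the inclusion-removal construction) is the same and still has to be done; your sketch postpones rather than replaces it.
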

\begin{proof}
The implication $3\Rightarrow 4$ follows from the fact that, given two noncrossing $k$-tuples that are ordered alike, we can apply the same permutation to those $k$-tuples in order to make them both well-ordered. The implication $4\Rightarrow 5$ is just the restriction to the special case $k=1$. The implication $5\Rightarrow 1$ is Proposition \ref{prop-rainbow-1}. To conclude the proof, we shall prove the implications $1\Rightarrow 2$ and $2\Rightarrow 3$.
\begin{enumerate}
\item[$1\Rightarrow 2$] Fix two noncrossing $k$-tuples $(p_1,\dots,p_k),(p_1',\dots,p_k')$ that are equivalent, well-ordered and without inclusions. Define $p_{k+1}\coloneqq (p_1\dots p_k)^{-1}g$, $p_{k+1}'\coloneqq (p_1'\dots p_k')^{-1}g$.

Fix $i\in\{1,\dots,k+1\}$, and define $n_i\coloneqq\len_R(p_i)=\len_R(p_i')$ (the equality holds because $(p_1,\dots,p_k),(p_1',\dots,p_k')$ are equivalent and because of Proposition \ref{poset-morph-free-cox}), and fix $(r_{i,1},\dots,r_{i,n_i})\in\Red_R(p_i)$ and $(r_{i,1}'',\dots,r_{i,n_i}'')\in\Red_R(p_i')$. Since $(W,T,h)$ is pan-transitive, there exists $\beta_i\in\Br_{n_i}$ such that $\beta_i\cdot(\pi(r_{i,1}''),\dots,\pi(r_{i,n_i}''))=(\pi(r_{i,1}),\dots,$ $\pi(r_{i,n_i}))$. Define $(r_{i,1}',\dots,r_{i,n_i}')=\beta_i\cdot(r_{i,1}'',\dots,r_{i,n_i}'')$.

Notice that we have:
\[(r_{1,1},\dots,r_{1,n_1},r_{2,1},\dots,r_{2,n_2},\dots,r_{k+1,1},\dots,r_{k+1,n_{k+1}})\in\Red_R(g)\]
\[(r_{1,1}',\dots,r_{1,n_1}',r_{2,1}',\dots,r_{2,n_2}',\dots,r_{k+1,1}',\dots,r_{k+1,n_{k+1}}')\in\Red_R(g)\]
Theorem \ref{hurwitz-fg} yields the existence of $\tau,\tau'\in\Br_n$ such that:
\[(r_{1,1},\dots,r_{1,n_1},r_{2,1},\dots,r_{2,n_2},\dots,r_{k+1,1},\dots,r_{k+1,n_{k+1}})=\tau\cdot(f_1,\dots,f_n)\]
\[(r_{1,1}',\dots,r_{1,n_1}',r_{2,1}',\dots,r_{2,n_2}',\dots,r_{k+1,1}',\dots,r_{k+1,n_{k+1}}')=\tau'\cdot(f_1,\dots,f_n)\]
Corollary \ref{cor-starhur} yields:
\[(r_{1,1},\dots,r_{1,n_1},r_{2,1},\dots,r_{2,n_2},\dots,r_{k+1,1},\dots,r_{k+1,n_{k+1}})=\tau^{-1}\star(f_1,\dots,f_n)\]
\[(r_{1,1}',\dots,r_{1,n_1}',r_{2,1}',\dots,r_{2,n_2}',\dots,r_{k+1,1}',\dots,r_{k+1,n_{k+1}}')=(\tau')^{-1}\star(f_1,\dots,f_n)\]
Therefore:
\[\begin{split}
\tau^{-1}\tau'\,\star\,&(r_{1,1}',\dots,r_{1,n_1}',r_{2,1}',\dots,r_{2,n_2}',\dots,r_{k+1,1}',\dots,r_{k+1,n_{k+1}}')=\\
&(r_{1,1},\dots,r_{1,n_1},r_{2,1},\dots,r_{2,n_2},\dots,r_{k+1,1},\dots,r_{k+1,n_{k+1}})
\end{split}\]
Which yields $\tau^{-1}\tau'\star p_i'=p_i$ for each $i\in\{1,\dots,k+1\}$. We conclude by proving $\tau^{-1}\tau'\in H$:
\[\begin{split}
\!\!\!\!\!\!\!\!\!\!\!\!\!\!\!\!\!\!
\tau^{-1}\tau'\cdot(s_1,\dots,s_n)=&\tau^{-1}\cdot\left(\pi(r_{1,1}'),\dots,\pi(r_{1,n_1}'),\pi(r_{2,1}'),\dots,\pi(r_{2,n_2}'),\dots,\pi(r_{k+1,1}'),\dots,\pi(r_{k+1,n_{k+1}}')\right)\\
=&\tau^{-1}\cdot\left(\pi(r_{1,1}),\dots,\pi(r_{1,n_1}),\pi(r_{2,1}),\dots,\pi(r_{2,n_2}),\dots,\pi(r_{k+1,1}),\dots,\pi(r_{k+1,n_{k+1}})\right)\\
=&(s_1,\dots,s_n)
\end{split}\]
Where the equality between the first two rows follows from the definition of the $t_{i,j}'$.
\item[$2\Rightarrow 3$] Fix two noncrossing $k$-tuples $(p_1,\dots,p_k),(p_1',\dots,p_k')$ that are equivalent and well-ordered, and define by induction:
\[
\begin{cases}
q_1=p_1\\
v_i=\prod_{j\in\{1,\dots,i-1\}\text{ such that } q_j\subseteq p_i}q_j\\
q_i=v_i^{-1}p_i
\end{cases}
\]
We define the $q_i'$ and the $v_i'$ in the same way, using $(p_1',\dots,p_k')$ instead of $(p_1,\dots,p_k)$. It is easy to prove by induction on $i$ that $(q_1,\dots,q_i,p_{i+1},\dots,p_k)$ and $(q_1',\dots,q_i',p_{i+1}',$ $\dots,p_k')$ are noncrossing $k$-tuples that are well-ordered, and that $(q_1,\dots,q_i)$ and $(q_1',\dots,q_i')$ are without inclusions. We shall prove that they are also equivalent. If $i=0$ we have nothing to prove, so we shall assume $i>0$ and that $(q_1,\dots,q_{i-1},p_{i},\dots,p_k)$ and $(q_1',\dots,q_{i-1}',p_{i}',\dots,p_k')$ are equivalent noncrossing $k$-tuples that are well-ordered, and that $(q_1,\dots,q_{i-1})$ and $(q_1',\dots,q_{i-1}')$ are without inclusions. From Lemma \ref{lemma-noncr-couple} we have:
\[v_i=\prod_{j\in\{1,\dots,i-1\}\text{ such that } \pi(q_j)\le_T \pi(p_i)}q_j\]
Given that $(q_1,\dots,q_{i-1},p_{i},\dots,p_k)$ and $(q_1',\dots,q_{i-1}',p_{i}',\dots,p_k')$ are equivalent, this yields $\pi(v_i)=\pi(v_i')$, hence $\pi(q_i)=\pi(q_i')$, so $(q_1,\dots,q_i,p_{i+1},\dots,p_k)$ and $(q_1',\dots,q_i',$ $p_{i+1}',\dots,p_k')$ are equivalent.

Taking $i=k$, we get that $(q_1,\dots,q_k)$ and $(q_1',\dots,q_k')$ are equivalent noncrossing $k$-tuples that are well-ordered and without inclusions. Therefore, there exists $\beta\in H$ such that $\beta\star (q_1',\dots,q_k')=(q_1,\dots,q_k)$. Since $\iota(\beta)$ is a group morphism, this yields $\beta\star (p_1',\dots,p_k')=(p_1,\dots,p_k)$.
\end{enumerate}
\end{proof}

\section{Free products}
\label{sect-fg}
Throughout Section \ref{sect-fg}, we fix two Coxeter systems $(W_1,S_1),(W_2,S_2)$, we fix $T_1,T_2$ their respective sets of reflections, we fix an $n$-tuple $(s_1,\dots,s_n)$ such that $S_1=\{s_1,\dots,s_k\}$ and $S_2=\{s_{k+1},\dots,s_n\}$, and we fix the Coxeter elements $h_1=s_1\dots s_k$, $h_2=s_{k+1}\dots s_n$ for $(W_1,S_1),(W_2,S_2)$ respectively. We then define:
\[H_1\coloneqq H(W_1,(s_1,\dots,s_k)) \qquad H_2\coloneqq H(W_2,(s_{k+1},\dots,s_n))\]
\[\overline{H}_1\coloneqq \overline{H}(W_1,(s_1,\dots,s_k)) \qquad \overline{H}_2\coloneqq \overline{H}(W_2,(s_{k+1},\dots,s_n))\]

We then define $(W,S)$ as the Coxeter system such that $W=W_1*W_2$ and $S=S_1\sqcup S_2$, we define $T$ as its set of reflections, and we fix $h\coloneqq h_1 h_2 = s_1\dots s_n$ as its Coxeter element. We define:
\[H\coloneqq H(W,(s_1,\dots,s_n)) \qquad \overline{H}\coloneqq \overline{H}(W,(s_1,\dots,s_n))\]

We define $F_n,f_1,\dots,f_n,g,R$ as in Subsection \ref{subs-fg}, and we define the following group morphisms:
\[\begin{split}
\pi_1:F_n\longrightarrow&\, W_1 \\ 
f_i\longmapsto &\, \begin{cases}
s_i&\text{if }i\le k\\
1&\text{if }i> k\end{cases}
\end{split}
\qquad
\begin{split}
\pi_2:F_n\longrightarrow&\, W_2 \\ 
f_i\longmapsto &\, \begin{cases}
1&\text{if }i\le k\\
s_i&\text{if }i> k\end{cases}
\end{split}
\]\[
\begin{split}
\pi:F_n\longrightarrow&\, W \\ 
f_i\longmapsto &\, s_i\end{split}\]
Similarly to Section \ref{sect-ele}, if $a\in [1,h]_T$ and $\tilde{a}\in [1,g]_R\cap\pi^{-1}(a)$, we shall simply write $\tilde{a}\in a$. We shall also identify $F_n$ with the fundamental group of $(\C\setminus\{x_1,\dots,x_n\},x_0)$ and define noncrossing loops, $\NC$ and $\NC_g$ as in Subsection \ref{subs-fg}, in order to make use of the equality between $(\NC_g,\subseteq)$ and $([1,g]_R,\le_R)$ from Theorem \ref{ncg-equal-1g}.

Finally, we identify $\Br_k\times\Br_{n-k}$ with the subgroup of $\Br_n$ generated by $\sigma_1,\dots,\sigma_{k-1},$ $\sigma_{k+1},\dots,\sigma_{n-1}$ via the morphism defined by $(\sigma_i,1)\mapsto \sigma_i$ and $(1,\sigma_j)\mapsto \sigma_{k+j}$, and we identify each subgroup of $\Br_k\times\Br_{n-k}$ with its image via the same morphism.

\subsection{Free product of well-stabilized is well-stabilized}
In this Subsection we aim to prove that, if $(W_1,(s_1,\dots,s_k))$ and $(W_2,(s_{k+1},\dots,s_n))$ are well-stabilized, then so is $(W,(s_1,\dots,s_n))$. To achieve this, we first give a few results on $F_n$.
\begin{lemma}
\label{lemma-tfae-hh}
Let $\beta\in\Br_n$. The following are equivalent:
\begin{enumerate}
\item $\beta\in\Br_k\times \Br_{n-k}$,
\item The first $k$ elements of $\beta\cdot (f_1,\dots,f_n)$ belong to $\left<f_1,\dots,f_k\right> <F_n$ and the last $n-k$ elements of that $n$-tuple belong to $\left<f_{k+1},\dots,f_n\right> <F_n$.
\end{enumerate}
\end{lemma}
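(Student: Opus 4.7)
The plan is to prove the two implications separately. The implication $1\Rightarrow 2$ is a straightforward induction; the hard direction is $2\Rightarrow 1$, which requires an argument using the free-product structure $F_n = F_n^{(1)} * F_n^{(2)}$, where $F_n^{(1)}\coloneqq\langle f_1,\dots,f_k\rangle$ and $F_n^{(2)}\coloneqq\langle f_{k+1},\dots,f_n\rangle$.

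For $1\Rightarrow 2$, I would induct on the length of $\beta$ as a word in $\{\sigma_1,\dots,\sigma_{k-1},\sigma_{k+1},\dots,\sigma_{n-1}\}^{\pm 1}$. The base case $\beta = 1$ is trivial. For the inductive step, note that applying $\sigma_i^{\pm 1}$ with $i\ne k$ to a tuple $(v_1,\dots,v_n)$ only alters positions $i$ and $i+1$, which both lie in the same block (the first $k$ coordinates if $i<k$, the last $n-k$ if $i>k$); the new entries are conjugates of the old ones by entries in the same block, so the block-membership property persists.

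For $2\Rightarrow 1$, let $(w_1,\dots,w_n)\coloneqq\beta\cdot(f_1,\dots,f_n)$. The Hurwitz action preserves the product, so $w_1\cdots w_n = g$. Since $w_1\cdots w_k\in F_n^{(1)}$ and $w_{k+1}\cdots w_n\in F_n^{(2)}$ by hypothesis, the uniqueness of normal forms in the free product $F_n^{(1)}*F_n^{(2)}$ applied to $g = (f_1\cdots f_k)(f_{k+1}\cdots f_n)$ forces $w_1\cdots w_k = f_1\cdots f_k$ and $w_{k+1}\cdots w_n = f_{k+1}\cdots f_n$. Each $w_i$ is an $F_n$-conjugate of some $f_{\sigma(i)}$, with $\sigma$ a permutation of $\{1,\dots,n\}$; abelianizing to $\Z^n$, the image of $w_i$ is the basis vector indexed by $\sigma(i)$, so the hypothesis forces $\sigma(\{1,\dots,k\}) = \{1,\dots,k\}$. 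Using the classical malnormality of free factors in a free group, each conjugator can moreover be chosen inside $F_n^{(1)}$ (respectively $F_n^{(2)}$), so $(w_1,\dots,w_k)$ is a reduced word for $f_1\cdots f_k$ in $F_n^{(1)}$ over the set of $F_n^{(1)}$-conjugates of $f_1,\dots,f_k$, and analogously for the second half.

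Applying Theorem~\ref{hurwitz-fg} to $F_n^{(1)}$ and $F_n^{(2)}$ separately, I obtain $\beta_1\in\Br_k$ and $\beta_2\in\Br_{n-k}$ realizing these two tuples from $(f_1,\dots,f_k)$ and $(f_{k+1},\dots,f_n)$ respectively. Identifying $(\beta_1,\beta_2)$ with an element of $\Br_k\times\Br_{n-k}\le\Br_n$ as in the setup of this section, this element and $\beta$ act identically on $(f_1,\dots,f_n)$, and the freeness part of Theorem~\ref{hurwitz-fg} then yields $\beta=(\beta_1,\beta_2)\in\Br_k\times\Br_{n-k}$. The main obstacle is the malnormality step: without it one only knows that each $w_i$ is a conjugate of $f_{\sigma(i)}$ in the ambient $F_n$, which is not enough to invoke Theorem~\ref{hurwitz-fg} on the two free factors separately.
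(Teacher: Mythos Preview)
Your proof is correct and follows the same strategy as the paper: deduce $w_1\cdots w_k=f_1\cdots f_k$ (the paper asserts this from the height in $[1,g]_R$; your free-product normal-form argument is cleaner), apply Theorem~\ref{hurwitz-fg} on each free factor to produce $\beta_1,\beta_2$, and then use freeness of the Hurwitz action on $\Red_R(g)$ to conclude $\beta=(\beta_1,\beta_2)$. The malnormality step you flag as the main obstacle is exactly what the paper glosses over when it applies Theorem~\ref{hurwitz-fg} ``to the free group $\langle f_1,\dots,f_k\rangle$''; in fact you can sidestep it entirely by applying Theorem~\ref{hurwitz-fg} to $F_n$ rather than to the factor: once you know $w_1\cdots w_k=f_1\cdots f_k\in[1,g]_R$, both $(w_1,\dots,w_k)$ and $(f_1,\dots,f_k)$ lie in $\Red_R(f_1\cdots f_k)$, so transitivity already yields $\beta_1\in\Br_k$ with $\beta_1\cdot(f_1,\dots,f_k)=(w_1,\dots,w_k)$, and no separate argument about conjugators is needed.
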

\begin{proof}
The implication $1\Rightarrow 2$ is immediate. We shall now show its converse.

Let $(r_1,\dots,r_n)\coloneqq\beta\cdot (f_1,\dots,f_n)$. From our assumption we get that $r_1\dots r_k$ is an element of $[1,g]_R$ of height $k$ which also belongs to $\left<f_1,\dots,f_k\right>$. The only such element is $f_1\dots f_k$, so we have $(r_1,\dots,r_k)\in\Red_{R\cap\left<f_1,\dots,f_k\right>}(f_1\dots f_k)$. Applying Theorem \ref{hurwitz-fg} to the free group $\left<f_1,\dots,f_k\right>$, there exists $\beta_1\in\Br_k$ such that $(r_1,\dots,r_k)=\beta_1\cdot(f_1,\dots,f_k)$. Similarly, there exists $\beta_2\in\Br_{n-k}$ such that $(r_{k+1},\dots,r_n)=\beta_1\cdot(f_{k+1},\dots,f_n)$. Identifying $\Br_k\times\Br_{n-k}$ with a subgroup of $\Br_n$ as described above, we then have that $\beta$ and $(\beta_1,\beta_2)$ act in the same way on $(f_1,\dots,f_n)$ via the Hurwitz action, which is free by Theorem \ref{hurwitz-fg}. This proves $\beta\in\Br_k\times\Br_{n-k}$.
\end{proof}

Since $F_n$ is the free product of its subgroups $\left<f_1,\dots,f_k\right>$ and $\left<f_{k+1},\dots,f_n\right>$, each element $a\in F_n$ has a unique factorization $a_1 a_2\dots a_N$, where each $a_i$ belongs to ${\cal{F}}\coloneqq\left<f_1,\dots,f_k\right>\cup\left<f_{k+1},\dots,f_n\right>\setminus\{1\}$ and for each $i<N$ we have $a_i \in \left<f_1,\dots,f_k\right>$ if and only if $a_{i+1} \in \left<f_{k+1},\dots,f_n\right>$. In other words, there exists a unique element $(a_1,a_2,\dots,a_N)\in\Red_{\cal{F}}(a)$. We say that $a$ is \emph{good} if for each $i\in\{1,\dots,N\}$ we have $\pi(a_i)\ne 1$.

\begin{lemma}
\label{lemma-nc-good}
Let $a\in [1,g]_R$. Then, $a$ is good.
\end{lemma}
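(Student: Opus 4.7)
The plan is to prove the lemma geometrically via Theorem \ref{ncg-equal-1g}, which identifies $[1,g]_R$ with $\NC_g$. I would fix a simple curve $L\subseteq\C$ passing through $x_0$ that separates $\{x_1,\dots,x_k\}$ from $\{x_{k+1},\dots,x_n\}$, yielding open half-planes $U_1,U_2$ containing the respective sets of punctures.

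Given $a\in [1,g]_R$, represent it by a noncrossing loop $\gamma$. After an ambient isotopy fixing the basepoint and the punctures, I may assume $\gamma$ meets $L$ transversely at $x_0$ and at finitely many other points, and that no sub-arc of $\gamma$ between two consecutive $L$-crossings is null-homotopic in its half-plane (any such trivial excursion is a simple arc together with a segment of $L$ bounding an embedded puncture-free disk in some $U_j$, hence can be eliminated by a small isotopy that preserves simplicity of $\gamma$). This cuts $\gamma$ into arcs $\gamma_1,\dots,\gamma_M$ that alternate between $U_1$ and $U_2$. Closing each $\gamma_i$ back to $x_0$ along $L$ produces a simple loop $\tilde\gamma_i$ lying in $U_{j(i)}$; setting $b_i\coloneqq[\tilde\gamma_i]$, these satisfy $a=b_1 b_2\cdots b_M$ with alternating factors and all $b_i\ne 1$. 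By uniqueness of the alternating factorization in the free product $F_k*F_{n-k}$, we conclude $M=N$ and $b_i=a_i$.

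It then remains to show $\pi(a_i)\ne 1$. Each $\tilde\gamma_i$ is a simple loop in $U_{j(i)}$ that must enclose a nonempty set $P_i$ of punctures (otherwise $\tilde\gamma_i$ would be null-homotopic and $a_i=1$). Consequently $a_i$ lies in $[1,g_{P_i}]_R$ inside the free subgroup on $\{f_l:x_l\in P_i\}$, where $g_{P_i}\coloneqq\prod_{x_l\in P_i} f_l$ in the cyclic order induced by $\gamma$. Applying Proposition \ref{poset-morph-free-cox} to the parabolic Coxeter system $\langle s_l:x_l\in P_i\rangle$ with Coxeter element $\pi(g_{P_i})$, the obvious projection from this sub-free group to the parabolic restricts to a height-preserving surjection. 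Since $a_i\ne 1$ has positive height in $[1,g_{P_i}]_R$, its image $\pi(a_i)$ has positive height in the parabolic subsystem, and in particular $\pi(a_i)\ne 1$ in $W$.

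The main technical obstacle is the isotopy step: rigorously justifying that every trivial excursion of $\gamma$ can be removed while preserving its simple-closed-curve nature, and checking that the arc-by-arc decomposition recovers exactly the algebraic reduced factorization $a=a_1\cdots a_N$ in $\mathcal F$. Once this geometric reduction is in place, the lemma collapses to a direct application of Proposition \ref{poset-morph-free-cox} inside a parabolic subsystem.
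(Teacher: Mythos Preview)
Your overall strategy matches the paper's: represent $a$ by a noncrossing loop, cut it along a separating curve, close the arcs to simple loops $\tilde\gamma_i$, identify the resulting factors with the reduced $\mathcal F$-factorization, and then argue that each factor has nontrivial image in $W$. The geometric reduction you describe is essentially the construction in the paper's Figures~\ref{fig-ra1} and~\ref{fig-ra2}.

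The gap is in the last paragraph, where you assert that $a_i$ lies in the free subgroup on $\{f_l:x_l\in P_i\}$ and then invoke Proposition~\ref{poset-morph-free-cox} for the parabolic subsystem on $P_i$. This assertion is false in general. A simple loop in $U_{j(i)}$ that encloses exactly the punctures $P_i$ can still wind around other punctures of $U_{j(i)}$ without enclosing them, and hence represent a conjugate lying outside $\langle f_l:x_l\in P_i\rangle$. Concretely, take $n=3$, $k=2$, $g=f_1f_2f_3$, and $a=(f_2^{-1}f_1f_2)\cdot f_3$; one checks $a\in[1,g]_R$ (it is a prefix of the reduced word $(f_2^{-1}f_1f_2,\,f_3,\,r)$ with $r\in R$), and its $\mathcal F$-factorization has $a_1=f_2^{-1}f_1f_2$. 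The loop $\tilde\gamma_1$ encloses only $x_1$, so $P_1=\{x_1\}$, yet $a_1\notin\langle f_1\rangle$. Your appeal to the parabolic therefore does not go through.

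The paper sidesteps this entirely: it observes that each $\tilde\gamma_i$ is simple and contained in $\Int(\gamma_g)$, so one of $a_i,a_i^{-1}$ lies in $\NC_g=[1,g]_R$; calling that element $b_i$, it has positive height there since $b_i\ne 1$, and Proposition~\ref{poset-morph-free-cox} applied to the \emph{full} morphism $\pi:F_n\to W$ gives $\pi(b_i)\ne 1$, hence $\pi(a_i)\ne 1$. Replacing your parabolic argument with this two-line observation closes the gap and completes your proof.
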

\begin{proof}
If $k\in\{0,n\}$ or $a=1$, the statement is trivial. We shall assume $k\notin\{0,n\}$ and $a\ne 1$. Recall the equality of $(\NC_g,\subseteq)$ and $([1,g]_R,\le_R)$ from Theorem \ref{ncg-equal-1g}.

Let $(a_1,\dots,a_N)$ be the unique element of $\Red_{\cal{F}}(a)$. We shall prove that, for each $i\in\{1,\dots,N\}$, we have either $a_i\in\NC_g$ or $a_i^{-1}\in\NC_g$.

Fix noncrossing loops $\gamma,\gamma_g$ representing $a,g$ respectively. Up to homotopy, we may assume that $\gamma$ intersects the curves drawn in red in Figure \ref{fig-ra1} the minimum number of times.
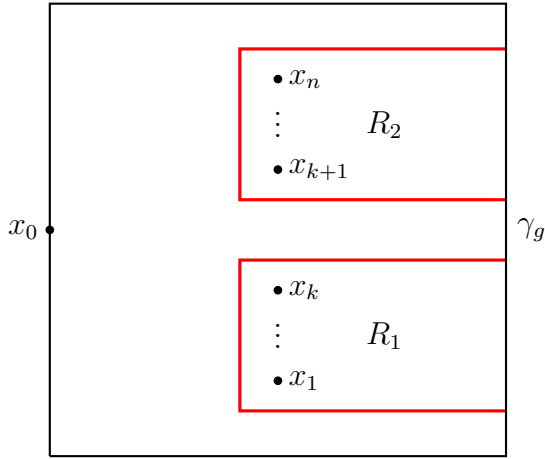
\begin{figure}[h]
\centering

\begin{tikzpicture}
\draw [fill] (-3,0) circle [radius=0.05];
\node[left] at (-3,0) {$x_0$};
\node[right] at (3,0) {$\gamma_g$};
\draw [fill] (0,-0.8) circle [radius=0.05];
\node[right] at (0,-0.8) {$x_{k}$};
\node at (0,-1.3) {$\vdots$};
\draw [fill] (0,-2) circle [radius=0.05];
\node[right] at (0,-2) {$x_1$};
\draw [fill] (0,0.8) circle [radius=0.05];
\node[right] at (0,0.8) {$x_{k+1}$};
\node at (0,1.5) {$\vdots$};
\draw [fill] (0,2) circle [radius=0.05];
\node[right] at (0,2) {$x_n$};
\node at (1.4,-1.4) {$R_1$};
\node at (1.4,1.4) {$R_2$};
\draw [red,very thick] (3,2.4) -- (-0.5,2.4) -- (-0.5,0.4) -- (3,0.4);
\draw [red,very thick] (3,-2.4) -- (-0.5,-2.4) -- (-0.5,-0.4) -- (3,-0.4);
\draw [thick] (-3,-3) -- (-3,3) -- (3,3) -- (3,-3) -- (-3,-3);
\end{tikzpicture}
\caption{We denote by $R_1,R_2$ the regions of $\C$ delimited by the image of $\gamma_g$ and by the red curves, containing $x_k, x_{k+1}$ respectively.}
\label{fig-ra1}
\end{figure}

Now, we define $\gamma_1,\dots,\gamma_{\tilde N}$ as the loops obtained from $\gamma$ following the procedure described in Figure \ref{fig-ra2},
\begin{figure}[h]
\centering

\begin{tikzpicture}[scale=0.7]
\draw [red,very thick] (3,2.7) -- (-0.8,2.7) -- (-0.8,0.4) -- (3,0.4);
\draw [red,very thick] (3,-2.7) -- (-0.8,-2.7) -- (-0.8,0.1) -- (3,0.1);
\draw [fill] (-3,0) circle [radius=0.05];
\node[left] at (-3,0) {$x_0$};
\node[right] at (3,0) {$\gamma_g$};
\draw [fill] (0,-2.15) circle [radius=0.05];
\node[right] at (0,-2.15) {$x_1$};
\draw [fill] (0,-1.3) circle [radius=0.05];
\node[right] at (0,-1.3) {$x_2$};
\draw [fill] (0,-0.45) circle [radius=0.05];
\node[right] at (0,-0.45) {$x_3$};
\draw [fill] (0,1.15) circle [radius=0.05];
\node[right] at (0,1.15) {$x_4$};
\draw [fill] (0,2) circle [radius=0.05];
\node[right] at (0,2) {$x_5$};

\draw [thick] (-0.8,-2.5) to [out=180,in=-70] (-3,0) to [out=70,in=180] (-0.8,2.45);
\draw [thick] (-0.8, -1.8) arc [x radius = 1.5, y radius = 1.65, start angle=270, end angle=90];
\draw [thick] (-0.8, -1.65) arc [x radius = 1.1, y radius = 1.225, start angle=270, end angle=90];
\draw [thick] (-0.8, -0.95) arc [x radius = 0.6, y radius = 0.8, start angle=270, end angle=90];

\node[right] at (2.1,-1.3) {$\gamma$};

\draw [thick] (-0.8,-0.95) -- (0.7,-0.95) arc [radius=0.35, start angle=90, end angle=-90] -- (-0.8,-1.65);
\draw [thick] (0,-0.8) arc [radius=0.35, start angle=270, end angle=90] -- (1.2,-0.1) arc [x radius=1, y radius=1.2, start angle=90, end angle=-90] -- (-0.8,-2.5);
\draw [thick] (0,-0.8) -- (1.1,-0.8) arc [radius=0.5, start angle=90, end angle=-90] -- (-0.8,-1.8);
\draw [thick] (-0.8,1.5) -- (0.7,1.5) arc [radius=0.35, start angle=90, end angle=-90] -- (-0.8,0.8);
\draw [thick] (-0.8,2.45) -- (1.1,2.45) arc [radius=0.9, start angle=90, end angle=-90] -- (-0.8,0.65);
\draw [fill] (0.7,-0.1) -- (0.9,0) -- (0.9,-0.2) -- (0.7,-0.1);
\draw [fill] (-0.6,-0.95) -- (-0.4,-0.85) -- (-0.4,-1.05) -- (-0.6,-0.95);
\draw [fill] (0.7,2.45) -- (0.9,2.55) -- (0.9,2.35) -- (0.7,2.45);
\draw [fill] (-0.3,1.5) -- (-0.5,1.6) -- (-0.5,1.4) -- (-0.3,1.5);

\draw [thick] (-3,-3) -- (-3,3) -- (3,3) -- (3,-3) -- (-3,-3);

\begin{scope}[shift={(9,0)}]
\draw [red,very thick] (3,2.7) -- (-0.8,2.7) -- (-0.8,0.4) -- (3,0.4);
\draw [red,very thick] (3,-2.7) -- (-0.8,-2.7) -- (-0.8,0.1) -- (3,0.1);
\draw [fill] (-3,0) circle [radius=0.05];
\node[left] at (-3,0) {$x_0$};
\node[right] at (3,0) {$\gamma_g$};
\draw [fill] (0,-2.15) circle [radius=0.05];
\node[right] at (0,-2.15) {$x_1$};
\draw [fill] (0,-1.3) circle [radius=0.05];
\node[right] at (0,-1.3) {$x_2$};
\draw [fill] (0,-0.45) circle [radius=0.05];
\node[right] at (0,-0.45) {$x_3$};
\draw [fill] (0,1.15) circle [radius=0.05];
\node[right] at (0,1.15) {$x_4$};
\draw [fill] (0,2) circle [radius=0.05];
\node[right] at (0,2) {$x_5$};

\draw [thick] (-0.8,-0.95) -- (0.7,-0.95) arc [radius=0.35, start angle=90, end angle=-90] -- (-0.8,-1.65);
\draw [thick] (0,-0.8) arc [radius=0.35, start angle=270, end angle=90] -- (1.2,-0.1) arc [radius=1.2, start angle=90, end angle=-90] -- (-0.8,-2.5);
\draw [thick] (0,-0.8) -- (1.1,-0.8) arc [radius=0.5, start angle=90, end angle=-90] -- (-0.8,-1.8);
\draw [thick] (-0.8,1.5) -- (0.7,1.5) arc [radius=0.35, start angle=90, end angle=-90] -- (-0.8,0.8);
\draw [thick] (-0.8,2.45) -- (1.1,2.45) arc [radius=0.9, start angle=90, end angle=-90] -- (-0.8,0.65);
\draw [fill] (0.7,-0.1) -- (0.9,0) -- (0.9,-0.2) -- (0.7,-0.1);
\draw [fill] (-0.6,-0.95) -- (-0.4,-0.85) -- (-0.4,-1.05) -- (-0.6,-0.95);
\draw [fill] (0.7,2.45) -- (0.9,2.55) -- (0.9,2.35) -- (0.7,2.45);
\draw [fill] (-0.3,1.5) -- (-0.5,1.6) -- (-0.5,1.4) -- (-0.3,1.5);

\draw [thick] (-3,-3) -- (-3,3) -- (3,3) -- (3,-3) -- (-3,-3);
\end{scope}

\begin{scope}[shift={(4.5,-7)}]
\draw [red,very thick] (3,2.7) -- (-0.8,2.7) -- (-0.8,0.4) -- (3,0.4);
\draw [red,very thick] (3,-2.7) -- (-0.8,-2.7) -- (-0.8,0.1) -- (3,0.1);
\draw [fill] (-3,0) circle [radius=0.05];
\node[left] at (-3,0) {$x_0$};
\node[right] at (3,0) {$\gamma_g$};
\draw [fill] (0,-2.15) circle [radius=0.05];
\node[right] at (0,-2.15) {$x_1$};
\draw [fill] (0,-1.3) circle [radius=0.05];
\node[right] at (0,-1.3) {$x_2$};
\draw [fill] (0,-0.45) circle [radius=0.05];
\node[right] at (0,-0.45) {$x_3$};
\draw [fill] (0,1.15) circle [radius=0.05];
\node[right] at (0,1.15) {$x_4$};
\draw [fill] (0,2) circle [radius=0.05];
\node[right] at (0,2) {$x_5$};

\draw [thick] (-0.8,-2.5) to [out=180,in=-70] (-3,0) to [out=70,in=180] (-0.8,2.45);
\draw [thick] (-0.8, -1.8) to [out=180,in=-60] (-3,0) to [out=55,in=180] (-0.8,1.5);
\draw [thick] (-0.8, -1.65) to [out=180,in=-55] (-3,0) to [out=40,in=180] (-0.8,0.8);
\draw [thick] (-0.8, -0.95) to [out=180,in=-40] (-3,0) to [out=35,in=180] (-0.8,0.65);

\node[right] at (2.1,-1.3) {$\gamma_1$};
\node[left] at (-0.7,-1.3) {$\gamma_3$};
\node[left] at (-0.7,1.1) {$\gamma_2$};
\node[right] at (1.9,1.55) {$\gamma_4$};

\draw [thick] (-0.8,-0.95) -- (0.7,-0.95) arc [radius=0.35, start angle=90, end angle=-90] -- (-0.8,-1.65);
\draw [thick] (0,-0.8) arc [radius=0.35, start angle=270, end angle=90] -- (1.2,-0.1) arc [x radius=1, y radius=1.2, start angle=90, end angle=-90] -- (-0.8,-2.5);
\draw [thick] (0,-0.8) -- (1.1,-0.8) arc [radius=0.5, start angle=90, end angle=-90] -- (-0.8,-1.8);
\draw [thick] (-0.8,1.5) -- (0.7,1.5) arc [radius=0.35, start angle=90, end angle=-90] -- (-0.8,0.8);
\draw [thick] (-0.8,2.45) -- (1.1,2.45) arc [radius=0.9, start angle=90, end angle=-90] -- (-0.8,0.65);
\draw [fill] (0.7,-0.1) -- (0.9,0) -- (0.9,-0.2) -- (0.7,-0.1);
\draw [fill] (-0.6,-0.95) -- (-0.4,-0.85) -- (-0.4,-1.05) -- (-0.6,-0.95);
\draw [fill] (0.7,2.45) -- (0.9,2.55) -- (0.9,2.35) -- (0.7,2.45);
\draw [fill] (-0.3,1.5) -- (-0.5,1.6) -- (-0.5,1.4) -- (-0.3,1.5);

\draw [thick] (-3,-3) -- (-3,3) -- (3,3) -- (3,-3) -- (-3,-3);
\end{scope}

\end{tikzpicture}
\caption{Example of construction of $\gamma_1,\dots,\gamma_{\tilde N}$. First, intersect the image of $\gamma$ with $R_1\cup R_2$. Then, complete each of the connected components of this intersection to a loop based in $x_0$, maintaining their orientation and in such a way that they do not self-intersect or intersect each other (except in $x_0$). Finally, number these loops such that $[\gamma]=[\gamma_1]\cdot[\gamma_2]\cdot\ldots\cdot[\gamma_{\tilde N}]$}
\label{fig-ra2}
\end{figure}
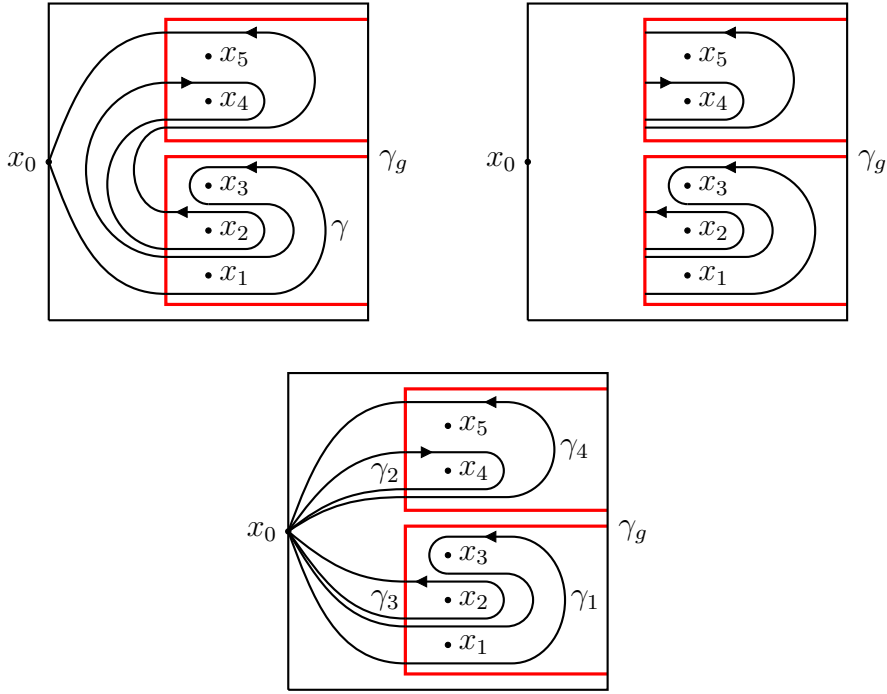
and we define $\tilde{a}_i\coloneqq [\gamma_i]\in F_n$ for each $i\in\{1,\dots\tilde N\}$. Since $\gamma_i$ does not self-intersect, we have either $\tilde{a}_i\in\NC_g$ or $\tilde{a}_i^{-1}\in\NC_g$. Notice that $\tilde{a}_1\dots\tilde{a}_{\tilde{N}}$ is a factorization fo $a$ in elements of $\cal F$ such that for each $i<\tilde N$ we have $\tilde{a}_i \in \left<f_1,\dots,f_k\right>$ if and only if $\tilde{a}_{i+1} \in \left<f_{k+1},\dots,f_n\right>$. So, $(\tilde{a}_1,\dots,\tilde{a}_{\tilde{N}})$ is the unique element of $\Red_{\cal F}(a)$, and is therefore equal to $(a_1,\dots,a_N)$. This proves that, for each $i\in\{1,\dots,N\}$, we have either $a_i\in\NC_g$ or $a_i^{-1}\in\NC_g$.

Now, for each $i\in\{1,\dots,N\}$, we define $b_i$ as the element of $\{a_i,a_i^{-1}\}\cap\NC_g$. Since $b_i\ne 1$, from Proposition \ref{poset-morph-free-cox} we have that $\pi(b_i)$ is an element of $[1,h]_T$ of positive height. In particular, it is not $1$, and thus $\pi(a_i)\ne 1$, which is the statement.
\end{proof}

\begin{lemma}
\label{lemma-oss-buono}
Let $a\in F_n$ be a good element. If $\pi(a)\in W_1$, then $a\in\left<f_1,\dots,f_k\right>$. If $\pi(a)\in W_2$, then $a\in\left<f_{k+1},\dots,f_n\right>$.
\end{lemma}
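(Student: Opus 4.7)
The plan is to reduce this to the uniqueness of normal forms in the free product $W = W_1 * W_2$. The key observation is that, for a good element $a$, the unique $\mathcal{F}$-factorization $(a_1, \dots, a_N) \in \Red_{\mathcal{F}}(a)$ projects via $\pi$ onto the normal form of $\pi(a)$ in $W_1 * W_2$; the conclusion then follows immediately from uniqueness of normal forms.

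First I would note that $\pi$ restricts to well-defined group morphisms $\langle f_1,\dots,f_k\rangle \to W_1$ and $\langle f_{k+1},\dots,f_n\rangle \to W_2$, directly from the definition $f_i \mapsto s_i$. Given the unique factorization $a = a_1 a_2 \cdots a_N$, applying $\pi$ yields $\pi(a) = \pi(a_1) \pi(a_2) \cdots \pi(a_N)$. The goodness hypothesis guarantees $\pi(a_i) \neq 1$ for each $i$, and since consecutive $a_i$ lie in distinct free factors of $F_n = \langle f_1,\dots,f_k\rangle * \langle f_{k+1},\dots,f_n\rangle$, the same holds for their images in the free product $W_1 * W_2$. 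Hence this expression is a reduced word in $W_1 * W_2$, that is, the normal form of $\pi(a)$.

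To conclude, suppose $\pi(a) \in W_1$. Then the normal form of $\pi(a)$ in $W_1 * W_2$ consists of at most one factor, that factor lying in $W_1$ (the empty word if $\pi(a)=1$, a single $W_1$-element otherwise). By uniqueness of normal forms, either $N = 0$, so $a = 1 \in \langle f_1, \dots, f_k\rangle$ trivially, or $N = 1$ and $a_1 \in \langle f_1, \dots, f_k\rangle$, whence $a = a_1 \in \langle f_1, \dots, f_k\rangle$. The case $\pi(a) \in W_2$ is identical with the roles of the two subgroups swapped.

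There is essentially no obstacle: the argument is pure bookkeeping, invoking the standard normal form theorem for free products both in the source $F_n$ (to understand the factorization $(a_1, \dots, a_N)$, which is the content of the definition of good) and in the target $W_1 * W_2$ (to conclude). The lemma is really just the statement that $\pi$ preserves the free-product normal form structure on good elements.
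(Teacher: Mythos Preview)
Your proof is correct and takes essentially the same approach as the paper: both push the $\mathcal{F}$-factorization of $a$ through $\pi$, use goodness to see that the result is the normal form of $\pi(a)$ in $W_1*W_2$, and conclude $N\le 1$ from uniqueness of normal forms. Your write-up is slightly more explicit in the final case analysis, but the argument is the same.
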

\begin{proof}
Let $(a_1,\dots,a_N)$ be the unique element of $\Red_{\cal{F}}(a)$. The factorization $\pi(a)=\pi(a_1)\pi(a_2)\dots\pi(a_n)$ is a factorization of $\pi(a)$ in elements of $W_1\cup W_2\setminus\{1\}$ such that for each $i<N$ the $i$-th factor belongs to $W_1$ if and only if the $(i+1)$-th factor belongs to $W_2$. Since $W$ is the free product of its subgroups $W_1$ and $W_2$, we have that $(\pi(a_1),\pi(a_2),\dots,\pi(a_n))$ is the unique reduced $(W_1\cup W_2\setminus\{1\})$-word for $\pi(a)$. Therefore, if $\pi(a)\in W_1\cup W_2$, we must have $N\le 1$, from which the statement follows easily.
\end{proof}

\begin{teo}
\label{thm-fg-hh}
We have $H=H_1\times H_2$ and $\overline{H}=\overline{H}_1\times \overline{H}_2$. In particular, if $(W_1,(s_1,\dots,s_k))$ and $(W_2,(s_{k+1},\dots,s_n))$ are well-stabilized, then so is $(W,(s_1,\dots,s_n))$.
\end{teo}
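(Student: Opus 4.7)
The plan is to first establish $H = H_1 \times H_2$ using the machinery of Lemmas \ref{lemma-tfae-hh}, \ref{lemma-nc-good} and \ref{lemma-oss-buono}, and then deduce the analogous statement for $\overline{H}$ by using that the standard Artin group of a free product is the free product of the standard Artin groups.

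For the inclusion $H \supseteq H_1 \times H_2$, the argument is immediate: if $(\beta_1,\beta_2) \in H_1\times H_2$, viewed as an element of $\Br_n$ via the fixed embedding, then $\beta_1$ acts only on the first $k$ coordinates (via $\sigma_1,\dots,\sigma_{k-1}$) and $\beta_2$ only on the last $n-k$ coordinates, so $(\beta_1,\beta_2)\cdot(s_1,\dots,s_n)=(s_1,\dots,s_n)$. For the reverse inclusion $H \subseteq \Br_k \times \Br_{n-k}$, I would fix $\beta \in H$, set $(r_1,\dots,r_n) \coloneqq \beta\cdot(f_1,\dots,f_n)$, and observe that each $r_i \in [1,g]_R$ since the whole tuple lies in $\Red_R(g)$. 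Because $\beta \in H$ we have $\pi(r_i)=s_i$, so $\pi(r_i)\in W_1$ for $i\le k$ and $\pi(r_i)\in W_2$ for $i>k$. By Lemma \ref{lemma-nc-good} each $r_i$ is good, and then Lemma \ref{lemma-oss-buono} forces $r_i \in \langle f_1,\dots,f_k\rangle$ in the first case and $r_i \in \langle f_{k+1},\dots,f_n\rangle$ in the second. Lemma \ref{lemma-tfae-hh} then yields $\beta \in \Br_k \times \Br_{n-k}$. Writing $\beta = (\beta_1,\beta_2)$, the equality $\beta\cdot(s_1,\dots,s_n)=(s_1,\dots,s_n)$ separates into $\beta_1\cdot(s_1,\dots,s_k)=(s_1,\dots,s_k)$ and $\beta_2\cdot(s_{k+1},\dots,s_n)=(s_{k+1},\dots,s_n)$, i.e.\ $(\beta_1,\beta_2)\in H_1\times H_2$.

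For the statement about $\overline{H}$, the inclusion $\overline{H}\supseteq \overline{H}_1\times\overline{H}_2$ is the same easy observation. For the reverse inclusion, since $\Art(W,S)$ projects onto $W$, any $\beta\in\overline{H}$ also lies in $H$, so by the first part $\beta = (\beta_1,\beta_2) \in \Br_k\times\Br_{n-k}$. The crucial point here is that since $m(s_i,s_j)=\infty$ for all $i\le k<j$ (as $W=W_1*W_2$), the standard Artin group splits as $\Art(W,S)=\Art(W_1,S_1)*\Art(W_2,S_2)$, so there are no braid relations between the two sets of generators. In particular the Hurwitz action of $\beta$ on $(\overline{s}_1,\dots,\overline{s}_n)$ decouples into the Hurwitz action of $\beta_1$ on $(\overline{s}_1,\dots,\overline{s}_k)$ and of $\beta_2$ on $(\overline{s}_{k+1},\dots,\overline{s}_n)$, and $\beta\in\overline{H}$ becomes equivalent to $(\beta_1,\beta_2)\in\overline{H}_1\times\overline{H}_2$. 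The final ``in particular'' clause is then immediate: if $H_1=\overline{H}_1$ and $H_2=\overline{H}_2$, then $H=H_1\times H_2=\overline{H}_1\times\overline{H}_2=\overline{H}$.

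The main obstacle is the step $H \subseteq \Br_k\times\Br_{n-k}$, and the heart of that step is the combination of Lemmas \ref{lemma-nc-good} and \ref{lemma-oss-buono}: these are precisely what allows us to promote the purely group-theoretic information ``$\pi(r_i)\in W_j$'' to the geometric/combinatorial statement ``$r_i$ lives in the corresponding free factor of $F_n$''. Once that transfer is made, Lemma \ref{lemma-tfae-hh} converts it to the desired block-diagonal structure of $\beta$ inside $\Br_n$, and everything else is essentially bookkeeping.
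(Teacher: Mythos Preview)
Your proof is correct and follows essentially the same approach as the paper. The paper singles out $H\subseteq\Br_k\times\Br_{n-k}$ as the ``crucial part'' and proves it exactly as you do (via Lemmas \ref{lemma-nc-good}, \ref{lemma-oss-buono}, \ref{lemma-tfae-hh}), declaring the remaining verifications ``easy to check''; you simply spell those out, including the observation that $\Art(W,S)=\Art(W_1,S_1)*\Art(W_2,S_2)$, which is the natural way to identify $\overline{H}_1,\overline{H}_2$ with stabilizers computed inside $\Art(W,S)$.
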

\begin{proof}
The crucial part of the proof is proving $H\subseteq\Br_k\times\Br_{n-k}$. From this and $\overline{H}\subseteq H$ it is easy to check the statement.

Let $\beta\in H$. In particular, we have that the first $k$ elements of $\beta\cdot (s_1,\dots,s_n)=(s_1,\dots,s_n)$ belong to $W_1$, while the last $n-k$ elements of that $n$-tuple belong to $W_2$. Let $(r_1,\dots,r_n)\coloneqq\beta\cdot (f_1,\dots,f_n)$. From Lemma \ref{lemma-nc-good}, we know that $r_1,\dots,r_n$ are good. From Lemma \ref{lemma-oss-buono}, we then know that $r_1,\dots,r_k$ belong to $\left<f_1,\dots,f_k\right>$, while $r_{k+1},\dots,r_n$ belong to $\left<f_{k+1},\dots,f_n\right>$. Lemma \ref{lemma-tfae-hh} then yields $\beta\in\Br_k\times\Br_{n-k}$.
\end{proof}

\subsection{Free product of pan-transitive is pan-transitive}
In this Subsection we aim to prove the following:
\begin{teo}
\label{thm-ele}
If $(W_1,T_1,h_1)$ and $(W_2,T_2,h_2)$ are pan-transitive, then so is $(W,T,h)$.
\end{teo}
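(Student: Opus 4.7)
The plan is to verify condition 5 of Proposition \ref{prop-rainbow-tfae} for $(W,T,h)$: given $\tilde{a},\tilde{a}'\in[1,g]_R$ with $\pi(\tilde{a})=\pi(\tilde{a}')$, I will construct $\tau\in H$ such that $\tau\star\tilde{a}'=\tilde{a}$. The key idea is to use the free product decomposition $F_n=\langle f_1,\dots,f_k\rangle * \langle f_{k+1},\dots,f_n\rangle$ to split the problem into two smaller pan-transitivity problems, solve each using the hypothesis on $(W_j,T_j,h_j)$, and glue the resulting braids via Theorem \ref{thm-fg-hh}.

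By Lemma \ref{lemma-nc-good}, $\tilde{a}$ is good, hence has a unique factorization $\tilde{a}=c_1c_2\cdots c_N$ with nontrivial $c_i$ alternating between $\langle f_1,\dots,f_k\rangle$ and $\langle f_{k+1},\dots,f_n\rangle$; the geometric construction in the proof of Lemma \ref{lemma-nc-good} exhibits each $c_i$ as a noncrossing sub-loop of a loop representing $\tilde{a}$, so $c_i\in\NC_g$ (up to a sign-flip that $\star$ handles symmetrically since $\iota$ is by automorphisms). Let $\tilde{a}'=c_1'c_2'\cdots c_{N'}'$ be the analogous factorization. Projecting via $\pi$, both factorizations give reduced $(W_1\cup W_2\setminus\{1\})$-words for $\pi(\tilde{a})=\pi(\tilde{a}')$ in the free product $W_1*W_2$; uniqueness of such words forces $N=N'$, each pair $c_i,c_i'$ to lie in the same free factor, and $\pi(c_i)=\pi(c_i')$ for all $i$.

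Partition the indices as $I_1\sqcup I_2$ according to which free factor contains $c_i$. The sub-tuple $(c_i)_{i\in I_1}$ lies entirely in $\langle f_1,\dots,f_k\rangle$ and, upon choosing representative loops contained in the $W_1$-side of the picture, forms a noncrossing tuple in $\NC_{g_1}$ for $g_1=f_1\cdots f_k$; it is well-ordered there because the inclusions and the ``$\cdot\in\NC_g$'' relations restrict properly to the smaller poset. The same holds for $(c_i')_{i\in I_1}$, and the two sub-tuples are equivalent under $\pi_1$. Condition 3 of Proposition \ref{prop-rainbow-tfae} applied to the pan-transitive $(W_1,T_1,h_1)$ then yields $\beta_1\in H_1$ with $\beta_1\star c_i'=c_i$ for all $i\in I_1$; analogously, $\beta_2\in H_2$ handles $I_2$.

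Setting $\tau\coloneqq\beta_1\beta_2$, Theorem \ref{thm-fg-hh} gives $\tau\in H_1\times H_2=H$. Since $\beta_1$ (viewed in $\Br_n$) fixes $f_{k+1},\dots,f_n$ under $\star$ while $\beta_2$ fixes $f_1,\dots,f_k$, we get $\tau\star c_i'=c_i$ for every $i$; as $\iota(\tau)$ is a group automorphism of $F_n$, this yields $\tau\star\tilde{a}'=(\tau\star c_1')\cdots(\tau\star c_N')=c_1\cdots c_N=\tilde{a}$, as required. The main obstacle I expect lies in the third paragraph: rigorously verifying that a noncrossing sub-loop confined to the $W_j$-side represents an element of $\NC_{g_j}$ (i.e.\ of $[1,g_j]_{R_j}$ in the smaller free group), and that well-orderedness and equivalence transfer cleanly to the smaller poset. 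This is a geometric and poset-theoretic bookkeeping argument relying on the explicit description of noncrossing loops from Subsection \ref{subs-fg}; once it is settled, the remaining steps are essentially formal manipulations using the equivalences of Proposition \ref{prop-rainbow-tfae} together with Theorem \ref{thm-fg-hh}.
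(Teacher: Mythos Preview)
Your overall strategy matches the paper's: decompose $\tilde a$ and $\tilde a'$ into their free-product pieces, apply the pan-transitivity of each factor via Proposition~\ref{prop-rainbow-tfae} to get $\beta_1\in H_1$ and $\beta_2\in H_2$, and assemble $\tau=(\beta_1,\beta_2)\in H_1\times H_2=H$ using Theorem~\ref{thm-fg-hh}. However, the technical step you flag as ``bookkeeping'' is in fact the heart of the argument, and you have misidentified what needs to be checked there.

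The real obstruction is twofold. First, the factors $c_i$ need not lie in $\NC_g$; only one of $c_i,\,c_i^{-1}$ does, so the paper introduces a sign $\varepsilon_{\tilde a}(i)$ and works with $\overline a_i=c_i^{\varepsilon_{\tilde a}(i)}$. Your remark that the sign-flip is ``handled symmetrically since $\iota$ is by automorphisms'' is exactly where the gap lies: from $\beta_1\star\overline a_i'=\overline a_i$ you can only conclude $\beta_1\star c_i'=c_i$ if $\varepsilon_{\tilde a}(i)=\varepsilon_{\tilde a'}(i)$, and this equality is not automatic. Second, the tuples $(\overline a_i)_{i\in I_1}$ and $(\overline a_i')_{i\in I_1}$ are generally \emph{not} well-ordered in their natural index order (already in the example of Figure~\ref{fig-ra2} one has $\overline a_3\subset\overline a_1$), so you cannot invoke condition~3 of Proposition~\ref{prop-rainbow-tfae}. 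The paper instead uses condition~4 (ordered alike), and the key input is Lemma~\ref{lemma-finalfg}: both the sign function $\varepsilon_{\tilde a}$ and the total order $\preceq_{\tilde a}$ depend only on $a=\pi(\tilde a)$, not on the lift. Proving this lemma requires the nontrivial combinatorial analysis of Propositions~\ref{oss-ff}, \ref{prop-preceq-prime}, and~\ref{oss-ff-2}, which reconstruct $\varepsilon$ and $\preceq$ purely from the data $(a_1,\dots,a_N)$ in $W$. Once Lemma~\ref{lemma-finalfg} is in hand, the rest of your outline goes through exactly as you describe.
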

The idea of our proof is to use the fact that $(W_1,T_1,h_1)$ and $(W_2,T_2,h_2)$ satisfy the fourth statement of Proposition \ref{prop-rainbow-tfae} and prove that $(W,T,h)$ satisfies the fifth one.

Let $a\in W$, and fix $\tilde{a}\in a$. From $\tilde{a}$, we construct $\gamma,\gamma_1,\dots,\gamma_N$ as in Figure \ref{fig-ra2}. For each $i\in\{1,\dots,N\}$ we define $\tilde{a}_i\coloneqq [\gamma_i]\in F_n$ and $a_i\coloneqq\pi(\tilde{a}_i)$. Notice that by Lemma \ref{lemma-nc-good} we have that $(a_1,\dots,a_N)$ is the element of $\Red_{W_1\cup W_2\setminus\{1\}}(a)$, therefore $N,a_1,\dots,a_N$ only depend on $a$.

For each $i\in\{1,\dots,N\}$, we shall define $\varepsilon_{\tilde{a}} (i)\in\{-1,1\}$ and $\overline{a}_i\in\NC_g$ such that $\overline{a}_i=\tilde{a}_i^{\varepsilon_{\tilde{a}} (i)}$. Moreover, we shall define $\overline{\gamma}_i$ as the noncrossing loop representing $\overline{a}_i$ that is equal to $\gamma_i$ if $\varepsilon_{\tilde{a}} (i)=1$, and is otherwise equal to $\gamma_i$ followed in reverse order.

We now define the total order relation $\preceq_{\tilde{a}}$ on $\{1,\dots,N\}$ such that $i\prec_{\tilde{a}}j$ if and only if one of the following holds:
\begin{itemize}
\item $\overline{\gamma}_i\subset\overline{\gamma}_j$,
\item $\overline{a}_i\ne\overline{a}_j$ and $(\overline{a}_i,\overline{a}_j)$ is a well-ordered noncrossing couple.
\end{itemize}
Notice that, if $a_i\subset a_j$, both of these conditions are met. We now state the following:
\begin{lemma}
\label{lemma-finalfg}
The function $\varepsilon_{\tilde a}:\{1,\dots,N\}\rightarrow\{-1,1\}$ and the order relation $\preceq_{\tilde a}$ only depend on $a$.
\end{lemma}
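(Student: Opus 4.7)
The plan is to establish invariance of $\varepsilon_{\tilde a}$ and $\preceq_{\tilde a}$ under the choice of lift $\tilde a\in\pi^{-1}(a)\cap[1,g]_R$ by relating any two such lifts via an element of $H_1\times H_2\subseteq H$ and then showing the resulting $\star$-action preserves both pieces of data.

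First I would observe that the reduced factorization $(a_1,\dots,a_N)\in\Red_{W_1\cup W_2\setminus\{1\}}(a)$ is determined solely by $a$, so the partition of $\{1,\dots,N\}$ into the $W_1$-indices and the $W_2$-indices is independent of $\tilde a$. For any lift, the loop $\overline\gamma_i$ lies in $R_j$ whenever $a_i\in W_j$, so the subcollection $\{\overline a_i:a_i\in W_j\}$ is a noncrossing tuple in $\NC_{g_j}$ where $g_1\coloneqq f_1\cdots f_k$ and $g_2\coloneqq f_{k+1}\cdots f_n$. By Lemma \ref{lemma-noncr-couple} applied inside $W_j$ (using pan-transitivity of $(W_j,T_j,h_j)$), the $\subset$-containments and well-orderedness within each side tuple are equivalent to the corresponding $\le_{T_j}$-relations on the projections $a_i$. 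Consequently, for different choices of $\tilde a$, the side tuples in $\NC_{g_j}$ are always equivalent and ordered alike. A similar analysis for cross-side pairs $(\overline a_i,\overline a_j)$ with $a_i\in W_1$, $a_j\in W_2$ shows that the cross-side part of $\preceq_{\tilde a}$ is dictated by the linear order of the factorization, hence by $a$ alone.

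I would then apply the fourth statement of Proposition \ref{prop-rainbow-tfae} to each $(W_j,T_j,h_j)$: for any two lifts $\tilde a,\tilde a'$ there exist $\beta_j\in H_j$ whose $\star$-actions send the $W_j$-side tuple of $\tilde a'$ to that of $\tilde a$. Under the embedding $\Br_k\times\Br_{n-k}\hookrightarrow\Br_n$, the pair $(\beta_1,\beta_2)$ lies in $H_1\times H_2\subseteq H$ and its $\star$-action sends the full tuple $(\overline a_1',\dots,\overline a_N')$ to $(\overline a_1,\dots,\overline a_N)$, with the indexing of the two tuples matching by the previous paragraph.

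Finally I would verify that a $\star$-action by an element of $H_1\times H_2$ preserves $\varepsilon$ and $\preceq$. The key point is that each generator of $\Br_k$ (resp.\ $\Br_{n-k}$) is realized by a homeomorphism of $\C$, of the type described in Figure \ref{fig-disegnino}, supported in a neighborhood of $R_1$ (resp.\ $R_2$) and equal to the identity elsewhere. Such a homeomorphism preserves the arc decomposition of any noncrossing representative $\gamma$, the orientations inherited by each arc, and the $\subset$-nesting within a side together with the cross-side order coming from the factorization. The main obstacle lies precisely here: one must make rigorous, for each braid generator of $\Br_k$ (resp.\ $\Br_{n-k}$), that the realizing homeomorphism can be chosen to fix the complement of $R_1$ (resp.\ $R_2$) pointwise, and that the arc-closure procedure of Figure \ref{fig-ra2} commutes with such homeomorphisms, so that the loops $\overline\gamma_i$ and $\overline\gamma_i'$ carry the same orientations and nesting patterns. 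Once this geometric compatibility is in place, the equalities $\varepsilon_{\tilde a}=\varepsilon_{\tilde a'}$ and $\preceq_{\tilde a}=\preceq_{\tilde a'}$ follow.
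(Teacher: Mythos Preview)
Your approach is quite different from the paper's and contains a genuine gap. The paper proves Lemma \ref{lemma-finalfg} without invoking pan-transitivity of the factors at all: it introduces two auxiliary total orders $\preceq_{\tilde a}'$ and $\preceq_{\tilde a}''$ on $\{1,\dots,N\}$ (recording, roughly, the relative heights at which the arcs $\gamma_i$ exit and enter the regions $R_1,R_2$), shows in Proposition \ref{oss-ff} that these orders satisfy a list of recursive rules depending only on the sequence $(a_1,\dots,a_N)$, and then proves in Proposition \ref{prop-preceq-prime} that any pair of total orders satisfying those rules is uniquely determined by $a$. Proposition \ref{oss-ff-2} then expresses $\varepsilon_{\tilde a}$ in terms of $\preceq_{\tilde a}''$, and Remark \ref{remark-preceq-depends} recovers $\preceq_{\tilde a}$ from all three pieces. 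The argument is purely combinatorial and topological.

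The gap in your argument is in the first paragraph, where you claim that the side tuples in $\NC_{g_j}$ coming from two different lifts are automatically ordered alike. Lemma \ref{lemma-noncr-couple} does give you $\overline a_i\subset\overline a_j\iff a_i\le_{T_j}a_j$, so containment within a side is indeed intrinsic to $a$. But for a same-side pair $i,j$ with neither $\overline a_i\subset\overline a_j$ nor $\overline a_j\subset\overline a_i$, the relevant condition for ``ordered alike'' is whether $\overline a_i\,\overline a_j\in\NC_{g_j}$ or $\overline a_j\,\overline a_i\in\NC_{g_j}$, and this is \emph{not} governed by the $\le_{T_j}$-relation between $a_i$ and $a_j$; Lemma \ref{lemma-noncr-couple} says nothing about it. Without this, you cannot invoke statement 4 of Proposition \ref{prop-rainbow-tfae} to produce your $\beta_j\in H_j$. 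Establishing that this well-orderedness of non-nested same-side pairs depends only on $a$ is exactly the substantive content of Lemma \ref{lemma-finalfg}; the paper extracts it from the recursive rules in Propositions \ref{oss-ff} and \ref{prop-preceq-prime}, which encode the fact that $\gamma$ is a simple curve. Your Step 1 therefore assumes the heart of what is to be proved, and the subsequent Steps 2--3 (even granting the geometric compatibility you flag as the ``main obstacle'') cannot start without it.
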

We shall now prove Theorem \ref{thm-ele} assuming Lemma \ref{lemma-finalfg}.
\begin{proof}
Let $a\in [1,h]_T\subseteq W\setminus\{1\}$, and let $\tilde{a},\tilde{a}'\in a$. Define $N,a_1,\dots,a_n,\tilde{a}_1,\dots,\tilde{a}_n,$ $\overline{a}_1,\dots,\overline{a}_n,\varepsilon_{\tilde{a}},\preceq_{\tilde{a}}$ as above, and define $\tilde{a}_1',\dots,\tilde{a}_n',\overline{a}_1',\dots,\overline{a}_n',\varepsilon_{\tilde{a}'},\preceq_{\tilde{a}'}$ in the same way using $\tilde{a}'$ instead of $\tilde{a}$. For the remainder of this proof, we shall assume $a_1\in W_1$ and that $N$ is even. The proof is basically the same in the other cases.

Notice that $(\overline{a}_1,\overline{a}_3,\dots,\overline{a}_{N-1})$ and $(\overline{a}_1',\overline{a}_3',\dots,\overline{a}_{N-1}')$ are equivalent noncrossing $\frac N2$-tuples, they are ordered alike because of Lemma \ref{lemma-finalfg}, and their elements belong to the subgroup $\left<f_1,\dots,f_k\right>\subseteq F_n$. Since $(W_1,T_1,h_1)$ satisfies the fourth statement of Proposition \ref{prop-rainbow-tfae}, there exists $\beta_1\in H_1$ such that $\beta_1\star (\overline{a}_1',\overline{a}_3',\dots,\overline{a}_{N-1}')=(\overline{a}_1,\overline{a}_3,\dots,\overline{a}_{N-1})$. Similarly, there exists $\beta_2\in H_2$ such that $\beta_2\star (\overline{a}_2',\overline{a}_4',\dots,\overline{a}_{N}')=(\overline{a}_2,\overline{a}_4,\dots,\overline{a}_{N})$. We then define $\beta=(\beta_1,\beta_2)\in H_1\times H_2=H$ (recall Theorem \ref{thm-fg-hh}), and notice that $\beta\star(\overline{a}_1',\overline{a}_2',\dots,\overline{a}_{N}')=(\overline{a}_1,\overline{a}_2,\dots,\overline{a}_{N})$. Therefore:
\[\begin{split}
\beta\star\tilde{a}'&=\beta\star(\tilde{a}_1'\tilde{a}_2'\dots\tilde{a}_{N}')\\
&=\beta\star\left((\overline{a}_1')^{\varepsilon_{\tilde{a}'}(1)}(\overline{a}_2')^{\varepsilon_{\tilde{a}'}(2)}\dots(\overline{a}_{N}')^{\varepsilon_{\tilde{a}'}(N)}\right)\\
&=\beta\star\left((\overline{a}_1')^{\varepsilon_{\tilde{a}}(1)}(\overline{a}_2')^{\varepsilon_{\tilde{a}}(2)}\dots(\overline{a}_{N}')^{\varepsilon_{\tilde{a}}(N)}\right)\\
&=(\beta\star\overline{a}_1')^{\varepsilon_{\tilde{a}}(1)}(\beta\star\overline{a}_2')^{\varepsilon_{\tilde{a}}(2)}\dots(\beta\star\overline{a}_{N}')^{\varepsilon_{\tilde{a}}(N)}\\
&=\overline{a}_1^{\varepsilon_{\tilde{a}}(1)}\overline{a}_2^{\varepsilon_{\tilde{a}}(2)}\dots\overline{a}_{N}^{\varepsilon_{\tilde{a}}(N)}=\tilde{a}
\end{split}\]
Where the equality between the second and third row follows from Lemma \ref{lemma-finalfg}. This proves that $(W,T,h)$ satisfies the fifth statement of Proposition \ref{prop-rainbow-tfae}, and is therefore pan-transitive.
\end{proof}
The remainder of this Subsection is devolved to the proof of Lemma \ref{lemma-finalfg}. To achieve this, we first define two auxiliary order relations.

We define the total order relation $\preceq_{\tilde{a}}'$ on $\{1,\dots,N\}$ such that $i\prec_{\tilde{a}}j$ if and only if one of the following holds:
\begin{itemize}
\item $\overline{\gamma}_i\subset\overline{\gamma}_j$ and $\varepsilon_{\tilde{a}}(j)=1$,
\item $\overline{\gamma}_i\supset\overline{\gamma}_j$ and $\varepsilon_{\tilde{a}}(i)=-1$,
\item $\overline{a}_i\ne\overline{a}_j$ and $(\overline{a}_i,\overline{a}_j)$ is a well-ordered noncrossing couple.
\end{itemize}
We then define the total order relation $\preceq_{\tilde{a}}''$ on $\{1,\dots,N\}$ such that $i\prec_{\tilde{a}}j$ if and only if one of the following holds:
\begin{itemize}
\item $\overline{\gamma}_i\subset\overline{\gamma}_j$ and $\varepsilon_{\tilde{a}}(j)=-1$,
\item $\overline{\gamma}_i\supset\overline{\gamma}_j$ and $\varepsilon_{\tilde{a}}(i)=1$,
\item $\overline{a}_i\ne\overline{a}_j$ and $(\overline{a}_i,\overline{a}_j)$ is a well-ordered noncrossing couple.
\end{itemize}
\begin{oss}
\label{remark-aosevede}
We shall explain the geometric meaning behind these definitions. We have $i\prec_{\tilde{a}}'j$ if the second intersection of $\gamma_i$ with the curves drawn in red in Figures \ref{fig-ra1} and \ref{fig-ra2} (i.e. the point in which $\gamma_i$ ``exits" $R_1$ or $R_2$) is drawn lower in those figures than the second intersection of $\gamma_j$ with those curves. Meanwhile, we have $i\prec_{\tilde{a}}''j$ if the first intersection of $\gamma_i$ with the curves drawn in red in those figures (i.e. the point in which $\gamma_i$ ``enters" $R_1$ or $R_2$) is drawn lower in those figures than the first intersection of $\gamma_j$ with those curves. In the example case from Figure \ref{fig-ra2}, we have $1\prec_{\tilde{a}}'3\prec_{\tilde{a}}'2\prec_{\tilde{a}}'4$ and $1\prec_{\tilde{a}}''3\prec_{\tilde{a}}''4\prec_{\tilde{a}}''2$.
\end{oss}
\begin{oss}
\label{remark-preceq-depends}
Notice that $\preceq_{\tilde{a}}$ can be defined in terms of $\preceq_{\tilde{a}}'$, $\preceq_{\tilde{a}}''$ and $\varepsilon_{\tilde{a}}$.
\end{oss}
\begin{prop}
\label{oss-ff}
The relations $\preceq_{\tilde{a}}',\preceq_{\tilde{a}}''$ satisfy the following:
\begin{enumerate}
\item If $a_i\in W_1$ and $a_j\in W_2$, then $i\prec_{\tilde{a}}' j$ and $i\prec_{\tilde{a}}'' j$.
\item If $(\overline{a}_i,\overline{a}_j)$ is a noncrossing couple without inclusions, then we have $i\prec_{\tilde{a}}' j$ if and only if $i\prec_{\tilde{a}}'' j$.
\item If $(\overline{a}_i,\overline{a}_j)$ is not a noncrossing couple without inclusions, then we have $i\prec_{\tilde{a}}' j$ if and only if $j\prec_{\tilde{a}}'' i$.
\item We have $i\prec_{\tilde{a}}'' j$ if and only if $j-1\prec_{\tilde{a}}' i-1$.
\item If $i>1$ and $a_1,a_i\in W_1$, then $1\prec_{\tilde{a}}'' i$.
\item If $i>1$ and $a_1,a_i\in W_2$, then $1\succ_{\tilde{a}}'' i$.
\item If we have neither $a_i\le_T a_j$ nor $a_j\le_T a_i$, then we have $i\prec_{\tilde{a}}' j$ if and only if $i\prec_{\tilde{a}}'' j$.
\item If we have $a_i\le_T a_j$ or $a_j\le_T a_i$, then we have $i\prec_{\tilde{a}}' j$ if and only if $j\prec_{\tilde{a}}'' i$. 
\end{enumerate}
\end{prop}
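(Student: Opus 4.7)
The plan is to translate each of the eight assertions into the geometric language set out in Remark~\ref{remark-aosevede}: associate to each $i \in \{1,\dots,N\}$ the pair $(e_i, x_i)$ of entry and exit points of $\gamma_i$ on the red curves bounding $R_1 \cup R_2$, so that $i \prec_{\tilde{a}}' j$ becomes ``$x_i$ lies lower than $x_j$'' and $i \prec_{\tilde{a}}'' j$ becomes ``$e_i$ lies lower than $e_j$''. My first step is to justify this dictionary directly from the definitions: in the third clause of each definition the couple $(\overline{a}_i, \overline{a}_j)$ is noncrossing without inclusions, so the loops sit side by side within a single region and both entry and exit heights agree; in the inclusion clauses, the inner loop's entry and exit are nested inside the outer's, and the factor $\varepsilon_{\tilde{a}}$ records which of ``entry'' or ``exit'' comes first along the outer loop's boundary, matching exactly the sign conditions in the definition.

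Once the dictionary is in place, statement~1 is immediate because $R_1$ sits strictly below $R_2$. For statements~2 and~3, I would observe that ``noncrossing without inclusions'' corresponds geometrically to loops sitting side by side, so entries and exits agree in order; whereas in the complementary case (equal representatives or one loop inside the other) the loops are nested, and by the counterclockwise orientation convention entries and exits come in opposite orders. Statement~4 follows from a topological remark: the pieces of $\gamma$ lying outside $R_1 \cup R_2$ join $x_{i-1}$ to $e_i$ for each $i$, and together with the initial arc $x_0 \to e_1$ and the final arc $x_N \to x_0$ form a noncrossing system of exterior arcs; a noncrossing perfect matching on points of a segment reverses the cyclic order of one endpoint set against the other, which is exactly the shifted reversal $i \prec_{\tilde{a}}'' j \iff j-1 \prec_{\tilde{a}}' i-1$.

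Statements~5 and~6 express that $e_1$ is the extremal entry on its region's boundary: because $\gamma_1$ is joined to the base-point $x_0$ by an exterior arc that cannot be separated from $x_0$ by any other exterior arc, the noncrossing condition on $\gamma$ pins $e_1$ to the lowest point on $R_1$'s boundary (when $a_1 \in W_1$) or the highest on $R_2$'s boundary (when $a_1 \in W_2$). Finally, statements~7 and~8 reduce to statements~1,~2, and~3 by the observation that $a_i \le_T a_j$ or $a_j \le_T a_i$ holds in $W$ precisely when $(\overline{a}_i, \overline{a}_j)$ is \emph{not} a noncrossing couple without inclusions; to justify this last equivalence I would apply Lemma~\ref{lemma-noncr-couple} inside $(W_1, T_1, h_1)$ and $(W_2, T_2, h_2)$ separately, which is legitimate since both systems are pan-transitive by hypothesis.

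The main obstacle will be statement~4 and the extremality in statements~5 and~6, as both are topological claims about the noncrossing structure of $\gamma$ and its exterior arcs that require some care to argue rigorously. Everything else reduces to a routine case check against the geometric dictionary.
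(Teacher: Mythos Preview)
Your overall strategy matches the paper's: statements 1--3 come from the definitions, statements 4--6 from the geometric picture in Remark~\ref{remark-aosevede} together with the fact that $\gamma$ is simple, and statements 7--8 are reduced to 2--3. Your treatment of 4 (via the noncrossing matching of exterior arcs) and of 5--6 (via extremality of the first entry point) is more explicit than the paper's, which simply appeals to Remark~\ref{remark-aosevede} and illustrates only statement~6 in detail; your version is a reasonable unpacking of the same idea.

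There is, however, a genuine gap in your reduction of 7--8 to 2--3. You claim that ``$a_i \le_T a_j$ or $a_j \le_T a_i$'' is equivalent to ``$(\overline{a}_i,\overline{a}_j)$ has an inclusion'', and you justify this via Lemma~\ref{lemma-noncr-couple} applied inside $(W_1,T_1,h_1)$ or $(W_2,T_2,h_2)$. But that lemma gives $\overline{a}_i \subseteq \overline{a}_j \iff \pi(\overline{a}_i) \le_T \pi(\overline{a}_j)$, and $\pi(\overline{a}_i) = a_i^{\varepsilon_{\tilde a}(i)}$, not $a_i$. When $\varepsilon_{\tilde a}(i) \ne \varepsilon_{\tilde a}(j)$ the two comparability conditions need not coincide: for instance, with $W_1$ the infinite dihedral group one can have $\overline{a}_1 = \overline{a}_3 = f_1 f_2$ (an inclusion) while $a_1 = s_1 s_2$ and $a_3 = s_2 s_1$ are $\le_T$-incomparable. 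So the equivalence you assert does not follow from Lemma~\ref{lemma-noncr-couple} alone, and you cannot invoke that $\varepsilon_{\tilde a}$ depends only on $a$, since that is precisely what Proposition~\ref{oss-ff} is being used to prove. The paper's own proof says only that 7--8 follow ``immediately'' from 2--3, so it does not help you here; you would need either an independent argument ruling out the bad sign configuration under an inclusion, or a direct length argument in $W$ linking comparability of $a_i,a_j$ to comparability of $a_i^{\varepsilon_i},a_j^{\varepsilon_j}$.
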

\begin{proof}
The first three statements follow from the definitions. Statements 4,5,6 follow from Remark \ref{remark-aosevede} and the fact that $\gamma$ does not self-intersect. Statements 7,8 follow immediately from statements 2,3 respectively.

\begin{figure}[h]
\centering

\begin{tikzpicture}
\draw [red,very thick] (3,2.4) -- (-0.5,2.4) -- (-0.5,0.4) -- (3,0.4);
\draw [red,very thick] (3,-2.4) -- (-0.5,-2.4) -- (-0.5,-0.4) -- (3,-0.4);
\draw [fill] (-3,0) circle [radius=0.05];
\node[left] at (-3,0) {$x_0$};
\node[right] at (3,0) {$\gamma_g$};
\draw [fill] (-0.5,-1.4) circle [radius=0.05];
\node[left] at (-0.5,-1.4) {$A$};
\node at (1.25,-1.4) {$R_1$};
\draw [fill] (-0.5,1.8) circle [radius=0.05];
\node[left] at (-0.5,1.8) {$C$};
\draw [fill] (-0.5,1) circle [radius=0.05];
\node[left] at (-0.5,1) {$B$};
\node at (1.25,1.4) {$R_2$};
\draw [thick] (-3,-3) -- (-3,3) -- (3,3) -- (3,-3) -- (-3,-3);
\end{tikzpicture}
\caption{$\gamma_g$ is a noncrossing path representing $g$ such that $\gamma\subseteq\gamma_g$}
\label{fig-ossfalsa2}
\end{figure}
We shall show the proof of statement 6 in more detail as an example. With reference to Figure \ref{fig-ossfalsa2}, let $A$ be the point in which $\gamma_{i-1}$ ``exits" $R_1$, and let $B,C$ be, in some order, the points in which $\gamma_1,\gamma_i$ ``enter" $R_2$. If $B$ were the point in which $\gamma_1$ ``enters" $R_2$, there would be a segment of $\gamma$ going from $x_0$ to $B$, and one going from $A$ to $C$, and both of those segments would be contained in the closure of $\Int(\gamma_g)\setminus R_1\setminus R_2$. Those two segments would necessarily intersect, which contradicts the fact that $\gamma$ is noncrossing. Therefore, $B$ is the point in which $\gamma_i$ ``enters" $R_2$, which yields $i\prec_{\tilde{a}}'' 1$
\end{proof}
\begin{prop}
\label{prop-preceq-prime}
Let $\preceq',\preceq''$ be total order relations on $\{1,\dots,N\}$ that satisfy statements 1,4,5,6,7,8 from Proposition \ref{oss-ff}. Then, the relations $\preceq'$ and $\preceq_{\tilde{a}}'$ are equal, and so are $\preceq''$ and $\preceq_{\tilde{a}}''$.
\end{prop}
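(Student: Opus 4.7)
The plan is to show that conditions 1, 4, 5, 6, 7, 8 from Proposition \ref{oss-ff} uniquely determine any pair of total orders $(\preceq', \preceq'')$ satisfying them. Since $(\preceq_{\tilde a}', \preceq_{\tilde a}'')$ satisfies those six conditions by Proposition \ref{oss-ff}, this uniqueness will give both desired equalities at once.

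First, statement 4 shows that $\preceq''$ is determined from $\preceq'$ (and vice versa) by a cyclic shift $i \mapsto i-1$ together with a reversal, under the convention $0 \equiv N$. So it suffices to pin down $\preceq''$ alone. Statement 1 then forces $\preceq''$ to partition $\{1, \dots, N\}$ into a lower block of $W_1$-indices and an upper block of $W_2$-indices, leaving only the restrictions to each block to determine. Assuming WLOG $a_1 \in W_1$, statement 5 identifies $1$ as the minimum among $W_1$-indices in $\preceq''$; the case $a_1 \in W_2$ is handled symmetrically via statement 6.

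For each remaining pair $(i, j)$ with $a_i, a_j$ in a common factor, I would iterate the following transfer step: apply statement 7 or 8 to translate the question about $\preceq''(i, j)$ into a question about $\preceq'(i, j)$ (with order preserved or reversed based on $\le_T$-comparability), and then apply statement 4 to convert it into a question about $\preceq''(j-1, i-1)$ on the cyclically shifted pair, which lies in the opposite factor. Iterating this traces out the orbit of the pair under cyclic shift by $-1$; along the orbit, statement 1 (for a pair spanning both factors) or statements 5, 6 (for a pair involving index 1) must eventually apply to pin the order down, which then propagates back along the chain of implications.

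The main obstacle will be verifying that this propagation is well-defined and yields a consistent answer for every pair, with careful handling of boundary cases around indices $1$ and $N$ where the cyclic convention in statement 4 must be invoked. A possible alternative is strong induction on $N$: after anchoring index $1$, remove it and apply the inductive hypothesis to the reduced sequence $a_2, \dots, a_N$, which still alternates between $W_1$ and $W_2$. The challenge there will be showing that the six conditions restrict correctly under this reduction, since the cyclic shift in statement 4 does not interact trivially with index removal.
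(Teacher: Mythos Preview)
Your approach is essentially the same as the paper's: both arguments anchor the comparison at a pair involving index $1$ via statements 5/6, then propagate to an arbitrary same-factor pair $(i,j)$ by alternately applying statement 7 or 8 (to pass between $\preceq'$ and $\preceq''$ on the same pair) and statement 4 (to shift the pair by one). The paper phrases this as an induction on $z$ going upward from $(1,\,j-i+1)$ to $(i,j)$, while you describe it as iterating downward from $(i,j)$; these are the same argument read in opposite directions.

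One issue to fix: your claim that statement 4 holds ``under the convention $0 \equiv N$'' is unsupported and unnecessary. Statement 4, as stated in Proposition \ref{oss-ff}, only makes sense when both shifted indices $i-1,\,j-1$ lie in $\{1,\dots,N\}$, and the paper only ever invokes it in that range. No wrap-around is needed: when you iterate downward from a same-factor pair $(i,j)$, the difference $j-i$ is even (since the $a_\ell$ alternate), so at every step both entries remain in a common factor and statement 1 never fires along the way. After exactly $i-1$ steps you reach $(1,\,j-i+1)$, where statement 5 or 6 applies directly. Thus the boundary at index $1$ is handled by 5/6, not by a cyclic version of statement 4; in particular your first paragraph's assertion that statement 4 alone determines $\preceq''$ from $\preceq'$ is false---it determines $\preceq''$ only on $\{2,\dots,N\}$, and the position of $1$ genuinely requires statements 5/6.
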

\begin{proof}
Let $1\le i<j\le N$. Assume we have neither $a_i,a_j\in W_1$ nor $a_i,a_j\in W_2$. Then, statement 1 from Proposition \ref{oss-ff} yields $i\prec'j$ if and only if $i\prec_{\tilde{a}}'j$, and $i\prec''j$ if and only if $i\prec_{\tilde{a}}''j$.

For the remainder of this proof, we shall assume that either $a_i,a_j\in W_1$ or $a_i,a_j\in W_2$ holds. This means that either $a_1,a_{j-i+1}\in W_1$ or $a_1,a_{j-i+1}\in W_2$ holds. So, either from statement 5 or from statement 6 from Proposition \ref{oss-ff}, we have $1\prec''j-i+1$ if and only if $1\prec_{\tilde{a}}''j-i+1$. Either from statement 7 or from statement 8 from Proposition \ref{oss-ff}, we have $1\prec'j-i+1$ if and only if $1\prec_{\tilde{a}}'j-i+1$.

We shall now prove by induction on $z$ that for each $z\in\{1,\dots,i\}$ we have $z\prec''j-i+z$ if and only if $z\prec_{\tilde{a}}''j-i+z$ and $z\prec'j-i+z$ if and only if $z\prec_{\tilde{a}}'j-i+z$. We already proved the case $z=1$, so we fix $z\in\{2,\dots,i\}$. By the induction hypothesis, we have $z-1\prec'j-i+z-1$ if and only if $z-1\prec_{\tilde{a}}'j-i+z-1$. Thus, by statement 4 from Proposition \ref{oss-ff} we have $z\prec''j-i+z$ if and only if $z\prec_{\tilde{a}}''j-i+z$. As before, either statement 7 or statement 8 from Proposition \ref{oss-ff} yields $z\prec'j-i+z$ if and only if $z\prec_{\tilde{a}}'j-i+z$. The statement follows from the case $z=i$.
\end{proof}
\begin{prop}
\label{oss-ff-2}
The function $\varepsilon_{\tilde{a}}:\{1,\dots,N\}\rightarrow\{1,-1\}$ satisfies the following:
\begin{enumerate}
\item $\varepsilon_{\tilde{a}}(1)=1$ if and only if $a_1\in W_1$
\item $\varepsilon_{\tilde{a}}(N)=1$ if and only if $a_N\in W_2$
\item For each $i\in\{2,\dots,N-1\}$, we have $\varepsilon_{\tilde{a}}(i)=-1$ if and only if $i-1\prec_{\tilde{a}}''i+1$
\end{enumerate}
\end{prop}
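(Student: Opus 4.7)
My plan is a direct geometric analysis of the noncrossing loop $\gamma$ and its arcs $\gamma_1,\dots,\gamma_N$ from Figure \ref{fig-ra2}, in the same spirit as the proof of Proposition \ref{oss-ff} (particularly statement 6). Recall that $\varepsilon_{\tilde{a}}(i)\in\{\pm 1\}$ records whether the natural completion of the $i$-th arc of $\gamma\cap(R_1\cup R_2)$ is itself noncrossing (sign $+1$) or whether it must be reversed (sign $-1$), and this depends on how the completion paths thread through the simply-connected region $\Int(\gamma_g)\setminus(R_1\cup R_2)$ subject to the non-crossing constraint between the $\gamma_j$'s. Throughout I assume $N\ge 2$; when $N=1$ one has $\tilde{a}\in\langle f_1,\dots,f_k\rangle$ or $\tilde{a}\in\langle f_{k+1},\dots,f_n\rangle$, and the main application of this proposition (inside Theorem \ref{thm-ele}) reduces directly to pan-transitivity of $(W_1,T_1,h_1)$ or $(W_2,T_2,h_2)$ in that degenerate case.

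For statement 1, I use that $\gamma$ is counterclockwise and based at $x_0$, which sits to the left of $R_1\cup R_2$; hence $\gamma$'s initial tangent at $x_0$ points toward $R_1$ (roughly southward) when $a_1\in W_1$ and toward $R_2$ (roughly northward) when $a_1\in W_2$. In the first case, the natural completion of $\gamma_1$ preserves $\gamma$'s counterclockwise orientation, giving $\varepsilon_{\tilde{a}}(1)=1$; in the second, the return path from $Q_1$ to $x_0$ is forced by non-crossing with the remaining arcs $\gamma_2,\dots,\gamma_N$ (which all originate from the opposite side) to thread past them, producing a clockwise completed loop and $\varepsilon_{\tilde{a}}(1)=-1$. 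Statement 2 follows by the symmetric analysis at the end of $\gamma$'s traversal: since $\gamma$ returns to $x_0$ from the direction of $R_2$, the last arc yields $\varepsilon_{\tilde{a}}(N)=1$ exactly when $a_N\in W_2$.

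For statement 3 with $2\le i\le N-1$, the orientation of $\gamma_i$'s completion depends on the topological configuration of its two completion paths relative to those of $\gamma_{i-1}$ and $\gamma_{i+1}$. By the geometric interpretation in Remark \ref{remark-aosevede}, the condition $i-1\prec_{\tilde{a}}''i+1$ says that the entry point of $\gamma_{i-1}$ into $R_1\cup R_2$ lies lower in the figure than that of $\gamma_{i+1}$. A local case analysis (paralleling the style of statement 6 of Proposition \ref{oss-ff}) then shows that this inequality is exactly the condition under which the non-crossing requirement forces the completion of $\gamma_i$ to wind in the direction opposite to $\gamma_i$'s arc, so that $\varepsilon_{\tilde{a}}(i)=-1$ holds precisely when $i-1\prec_{\tilde{a}}''i+1$. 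The key tool is the cyclic ordering of the tangent directions of the $\gamma_j$'s at $x_0$, which is pinned down by the non-crossing property and the identity $[\gamma]=[\gamma_1][\gamma_2]\cdots[\gamma_N]$.

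The principal obstacle is making this last topological argument rigorous case-by-case. My approach is to consider each of the few possible configurations of $a_{i-1},a_i,a_{i+1}$ belonging to $W_1$ or $W_2$, describe the cyclic order of the six tangent vectors at $x_0$ coming from the outgoing and incoming segments of $\gamma_{i-1},\gamma_i,\gamma_{i+1}$ (this order being forced by the product relation together with non-crossing), and then read off the orientation of the completed $\gamma_i$ from this cyclic data. The non-self-crossing of $\gamma$ and the simple connectivity of $\Int(\gamma_g)\setminus(R_1\cup R_2)$ are what guarantee that the configuration is uniquely determined once the $\prec_{\tilde a}''$-order of the neighbors is fixed, so that the claimed equivalence follows.
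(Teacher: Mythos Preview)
Your approach is essentially the same as the paper's: a direct geometric reading of the configuration in Figure~\ref{fig-ra2} via Remark~\ref{remark-aosevede} together with the fact that $\gamma$ does not self-intersect. The paper's proof is in fact a single sentence invoking exactly these two ingredients, so your more detailed case analysis (modulo minor imprecisions such as the claim about the initial tangent direction at $x_0$, which is only correct up to homotopy) expands on what the paper leaves implicit.
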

\begin{proof}
All statements follow from Remark \ref{remark-aosevede} and the fact that $\gamma$ does not self-intersect.
\end{proof}
We can now prove Lemma \ref{lemma-finalfg}, thus concluding the proof of Theorem \ref{thm-ele}.
\begin{proof}
We know from Proposition \ref{prop-preceq-prime} that the relations $\preceq_{\tilde{a}}',\preceq_{\tilde{a}}''$ only depend on $a$. From Proposition \ref{oss-ff-2}, we know that $\varepsilon_{\tilde{a}}$ only depends on $\preceq_{\tilde{a}}',\preceq_{\tilde{a}}''$, and thus only depends on $a$. The statement follows from Remark \ref{remark-aosevede}.
\end{proof}
\subsection{Proof of the main results}
Putting Theorems \ref{thm-focus}, \ref{thm-fg-hh} and \ref{thm-ele} together, we get the following:
\begin{teo}
\label{thm-final}
Assume that $(W_1,(s_1,\dots,s_k))$ and $(W_2,(s_{k+1},\dots,s_n))$ are well-stabilized, and that $(W_1,T_1,h_1)$ and $(W_2,T_2,h_2)$ are pan-transitive. Then, $(W,(s_{1},\dots,s_n))$ is well-stabilized, and $(W,T,h)$ is pan-transitive. In particular, we have that $\Art(W,S)$ is isomorphic to $\Art^*(W,T,h)$.
\end{teo}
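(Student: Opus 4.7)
The plan is essentially to cite the three main theorems of the paper in sequence, as Theorem \ref{thm-final} is the culmination of the work done in Sections \ref{sect-morph}, \ref{sect-ele}, and \ref{sect-fg}. No new ideas are required.

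First, I would invoke Theorem \ref{thm-fg-hh}: since by hypothesis $(W_1,(s_1,\dots,s_k))$ and $(W_2,(s_{k+1},\dots,s_n))$ are both well-stabilized, that theorem immediately gives that $(W,(s_1,\dots,s_n))$ is well-stabilized. Second, I would invoke Theorem \ref{thm-ele}: since $(W_1,T_1,h_1)$ and $(W_2,T_2,h_2)$ are pan-transitive, that theorem immediately gives that $(W,T,h)$ is pan-transitive. These two facts together are exactly the hypotheses of Theorem \ref{thm-focus}, applied to the Coxeter system $(W,S)$ with the chosen ordering $(s_1,\dots,s_n)$ and Coxeter element $h=s_1\dots s_n=h_1h_2$. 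Applying that theorem yields the desired isomorphism $\Art(W,S)\cong \Art^*(W,T,h)$.

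Since the three ingredients have already been established, there is no genuine obstacle; the only thing to check is that the hypotheses line up correctly, in particular that the tuple $(s_1,\dots,s_n)$ built from concatenating the tuples $(s_1,\dots,s_k)$ and $(s_{k+1},\dots,s_n)$ really does produce the Coxeter element $h$ used throughout Section \ref{sect-fg}, which is immediate from how $h=h_1h_2$ was defined at the start of that section. The proof is therefore essentially one sentence: apply Theorems \ref{thm-fg-hh} and \ref{thm-ele} to obtain the two hypotheses of Theorem \ref{thm-focus}, and invoke that theorem.
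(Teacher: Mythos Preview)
Your proposal is correct and matches the paper's own proof exactly: the paper simply says ``Putting Theorems \ref{thm-focus}, \ref{thm-fg-hh} and \ref{thm-ele} together, we get the following,'' which is precisely the chain of citations you describe.
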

\begin{oss}
\label{remark-freeprod}
From this and Remark \ref{remark-known-cases}, we have that if a Coxeter group $W$ can be written as a free product of spherical and affine groups, and we fix a product of Coxeter elements of those groups as the Coxeter element for $W$, then we have that the standard and the dual Artin groups are isomorphic via $\Psi$. It is worth noting that not all Coxeter elements of $W$ can be written in that form.
\end{oss}

Finally, we shall state a simpler version of Theorem \ref{thm-final} that deals with direct products instead of free products, and give a sketch of its proof.
\begin{teo}
\label{thm-direct}
Assume that $(W_1,(s_1,\dots,s_k))$ and $(W_2,(s_{k+1},\dots,s_n))$ are well-stabilized, and that $(W_1,T_1,h_1)$ and $(W_2,T_2,h_2)$ are pan-transitive. Define $W_\times\coloneqq W_1\times W_2$, $S_\times\coloneqq S_1\times\{1\}\cup \{1\}\times S_2$ and $T_\times\coloneqq T_1\times\{1\}\cup \{1\}\times T_2$. We then have that $(W_\times,((s_1,1),\dots,(s_k,1),$ $(1,s_{k+1}),\dots,(1,s_n)))$ is well-stabilized, and that $(W_\times,T_\times,(h_1,h_2))$ is pan-transitive. In particular, we have that $\Art(W_\times,S_\times)$ is isomorphic to $\Art^*(W_\times,T_\times,(h_1,h_2))$.
\end{teo}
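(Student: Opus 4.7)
The plan is to reduce Theorem \ref{thm-direct} to Theorem \ref{thm-focus} applied to the two factors, by exhibiting direct product decompositions of every group and poset in sight. First I would establish that $\Art(W_\times, S_\times) \cong \Art(W_1, S_1) \times \Art(W_2, S_2)$: every element of $S_1 \times \{1\}$ commutes with every element of $\{1\} \times S_2$ in $W_\times$, so the corresponding entries of the Coxeter matrix equal $2$, giving the commutation relations $\overline{s}_i \overline{s}_j = \overline{s}_j \overline{s}_i$ for $i \le k < j$ in the Artin presentation and collapsing it to a direct product. Next I would observe that $T_\times = (T_1 \times \{1\}) \sqcup (\{1\} \times T_2)$ and that $\len_{T_\times}((a_1,a_2)) = \len_{T_1}(a_1) + \len_{T_2}(a_2)$, yielding an edge-labeled poset isomorphism $[1,(h_1,h_2)]_{T_\times} \cong [1,h_1]_{T_1} \times [1,h_2]_{T_2}$.

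Using the Hurwitz presentation of Subsection \ref{sub-fr}, I would then check that $\Art^*(W_\times, T_\times, (h_1, h_2)) \cong \Art^*(W_1, T_1, h_1) \times \Art^*(W_2, T_2, h_2)$: the mixed relations $\{t\}\{t'\} = \{tt't\}\{t\}$ with $t \in T_1 \times \{1\}$ and $t' \in \{1\} \times T_2$ reduce to plain commutation $\{t\}\{t'\} = \{t'\}\{t\}$ (since $tt't = t'$), while relations within each coordinate reproduce the Hurwitz presentation of the corresponding factor. Under these two identifications, the morphism $\Psi$ for the direct product sends generators to generators coordinate-wise, so it coincides with $\Psi_1 \times \Psi_2$. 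Since the hypothesis of Theorem \ref{thm-focus} holds for both factors, $\Psi_1$ and $\Psi_2$ are isomorphisms, and therefore so is $\Psi$ for the direct product; Proposition \ref{isom-then-ws} then gives that $(W_\times, ((s_1,1),\dots,(s_k,1),(1,s_{k+1}),\dots,(1,s_n)))$ is well-stabilized.

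For pan-transitivity of $(W_\times, T_\times, (h_1, h_2))$, I would argue directly. Fix $(a_1, a_2) \in [1,(h_1,h_2)]_{T_\times}$; every reduced $T_\times$-word for $(a_1, a_2)$ is an interleaving of a reduced $T_1$-word for $a_1$ (embedded in $T_1 \times \{1\}$) with a reduced $T_2$-word for $a_2$ (embedded in $\{1\} \times T_2$). Any two interleavings of the same pair of subwords are related by Hurwitz moves, since for adjacent commuting entries $u, v$ one has $\sigma_i \cdot (\dots, u, v, \dots) = (\dots, v, u, \dots)$. Thus one can rearrange both given words so that all $T_1$-entries precede all $T_2$-entries, and then invoke pan-transitivity of $(W_1, T_1, h_1)$ and $(W_2, T_2, h_2)$ inside the respective blocks to finish.

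The main obstacle is verifying rigorously that the Hurwitz presentation genuinely decomposes: one has to confirm that every relation $\{t\}\{t'\} = \{tt't\}\{t\}$ with $tt' \in [1,(h_1,h_2)]_{T_\times}$ is either internal to one of the two coordinate blocks or a pure commutation between them, which in turn rests on the clean product decomposition of the interval $[1,(h_1,h_2)]_{T_\times}$. Once that is in place, the remaining steps---matching of generators, compatibility with $\Psi$, and the interleaving argument for pan-transitivity---are short bookkeeping.
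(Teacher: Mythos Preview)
Your proposal is correct and follows essentially the same route as the paper's proof: decompose both $\Art(W_\times,S_\times)$ and $\Art^*(W_\times,T_\times,(h_1,h_2))$ as direct products of the corresponding groups for the factors, identify $\Psi$ with $\Psi_1\times\Psi_2$, invoke the hypotheses via Theorem~\ref{thm-focus} (the paper cites Proposition~\ref{stab-ele-isom} directly, which amounts to the same thing) to conclude $\Psi$ is an isomorphism, apply Proposition~\ref{isom-then-ws} for well-stabilized, and use the interleaving/sorting argument for pan-transitivity. The only difference is cosmetic: you spell out the Hurwitz-presentation check for the dual Artin group decomposition and the poset isomorphism $[1,(h_1,h_2)]_{T_\times}\cong[1,h_1]_{T_1}\times[1,h_2]_{T_2}$, whereas the paper simply declares these identifications ``natural''.
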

\begin{proof}
Let $\Psi_1:\Art(W_1,S_1)\rightarrow \Art(W_1,T_1,h_1)$ and $\Psi_2:\Art(W_2,S_2)\rightarrow \Art(W_2,T_2,h_2)$ be the isomorphisms from Proposition \ref{stab-ele-isom}. It is natural to identify $\Art(W_\times,S_\times)$ and $\Art^*(W_\times,T_\times,(h_1,h_2))$ respectively with $\Art(W_1,S_1)\times \Art(W_2,S_2)$ and $\Art^*(W_1,T_1,h_1)\times \Art^*(W_2,T_2,h_2)$. We then see the product of the maps $\Psi_1,\Psi_2$ as a map $\Psi:\Art(W_\times,S_\times)\rightarrow \Art^*(W_\times,T_\times,(h_1,h_2))$, and $\Psi$ is an isomorphism since $\Psi_1,\Psi_2$ are isomorphism. Moreover, notice that $\Psi$ is the map from Proposition \ref{prop-def-psi}. Proposition \ref{isom-then-ws} then yields that $(W_\times,((s_1,1),\dots,(s_k,1),(1,s_{k+1}),\dots,(1,s_n)))$ is well-stabilized.

Let $a=(a_1,a_2)\in W_\times=W_1\times W_2$, and let $(t_1,\dots,t_N),(t_1',\dots,t_N')\in\Red_{T_\times}(a)$. There exist $\beta,\beta'\in\Br_N$ and $\ell\in\{0,\dots,N\}$ such that the first $\ell$ elements of the $N$-tuples $(u_1,\dots,u_N)\coloneqq\beta\cdot(t_1,\dots,t_N)$ and $(u_1',\dots,u_N')\coloneqq\beta'\cdot(t_1',\dots,t_N')$ belong to $T_1$ and form a reduced $T_1$-word for $a_1$, while the last $N-\ell$ elements of those $N$-tuples belong to $T_1$ and form a reduced $T_1$-word for $a_2$. Since $(W_1,T_1,h_1)$ and $(W_2,T_2,h_2)$ are pan-transitive, there exist $\tau_1\in\Br_\ell$ and $\tau_2\in\Br_{N-\ell}$ such that $(\tau_1,\tau_2)\cdot (u_1,\dots,u_N)=(u_1',\dots,u_N')$ (here we identify $\Br_\ell\times\Br_{N-\ell}$ with a subgroup of $\Br_N$ similarly to how it is described at the beginning of Section \ref{sect-fg}). Therefore, we have $(t_1',\dots,t_N')=(\beta')^{-1}(\tau_1,\tau_2)\beta\cdot (t_1,\dots,t_N)$, which proves that $(W_\times,T_\times,(h_1,h_2))$ is pan-transitive.
\end{proof}

\subsection{Applications}
\begin{prop}
\label{appl1}
Let $\Gamma$ be a graph with $\{1,\dots,N\}$ as its set of vertices, such that its connected components are complete graphs over sets of vertices of the form $\{a,a+1,\dots,b\}$. Let $W$ be the \emph{graph product} (defined in \cite{green}) of Coxeter groups $W_1, \dots, W_N$, each with a given Coxeter element and a factorization which satisfy the well-stabilized and pan-transitive conditions. Then, $W$ admits a set of simple reflections and a Coxeter element such that the standard and dual Artin groups are isomorphic to each other.
\end{prop}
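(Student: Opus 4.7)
The plan is to observe that the hypothesis on $\Gamma$ makes $W$ a free product of direct products, and then to apply Theorems \ref{thm-direct} and \ref{thm-final} iteratively. Let $C_1,\dots,C_M$ be the connected components of $\Gamma$, enumerated so that each $C_j=\{a_j,a_j+1,\dots,b_j\}$ with $b_{j-1}+1=a_j$ for $j\ge 2$. Since each $C_j$ is a complete subgraph, the generators of $W_{a_j},\dots,W_{b_j}$ pairwise commute across distinct factors inside the graph product, so the subgroup they generate is the direct product $V_j\coloneqq W_{a_j}\times\cdots\times W_{b_j}$. Since no edges connect distinct components, no further commutation relations occur, and $W$ decomposes as the free product $V_1*V_2*\cdots*V_M$.

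For each $i$, fix by hypothesis a tuple of simple reflections and a Coxeter element of $W_i$ satisfying the well-stabilized and pan-transitive conditions. For each $j$, a short induction on the number of factors, using Theorem \ref{thm-direct} at each step (grouping the direct factors as $((W_{a_j}\times W_{a_j+1})\times W_{a_j+2})\times\cdots$), shows that $V_j$, equipped with the concatenated tuple of simple reflections and with the Coxeter element $(h_{a_j},h_{a_j+1},\dots,h_{b_j})$, is well-stabilized and pan-transitive. A second induction, this time on $M$ and using Theorem \ref{thm-final} at each step, then shows that $W=V_1*\cdots*V_M$, equipped with the concatenation of these tuples and with the Coxeter element $h=h_1'h_2'\cdots h_M'$ (where $h_j'$ is the Coxeter element of $V_j$ just constructed), is itself well-stabilized and pan-transitive. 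Theorem \ref{thm-focus} then gives the desired isomorphism $\Art(W,S)\cong\Art^*(W,T,h)$.

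The only nontrivial point is bookkeeping of the ordering of generators: both Theorem \ref{thm-direct} and Theorem \ref{thm-final} demand that the tuple list the generators of one factor entirely before those of the next. This is exactly what the interval-component hypothesis guarantees, since enumerating generators in the natural order $1,2,\dots,N$ places all generators of $V_j$ consecutively within the tuple for $W$. Were the components of $\Gamma$ allowed to be arbitrary subsets of $\{1,\dots,N\}$, one would need to reshuffle the indexing of the $W_i$'s to restore this property; under the stated assumption no reshuffling is needed, and the proof reduces to a routine chaining of the previously established theorems with no genuinely new obstacle to overcome.
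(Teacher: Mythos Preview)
Your proposal is correct and follows essentially the same approach as the paper: decompose $W$ as a free product of direct products (using that the components of $\Gamma$ are complete), then iterate Theorems \ref{thm-direct} and \ref{thm-final}. The paper's proof is just a two-sentence sketch of this same argument; your version is more detailed (in particular, the remark about the interval hypothesis ensuring the generator orderings line up is a nice addition), and the final appeal to Theorem \ref{thm-focus} is redundant since Theorem \ref{thm-final} already contains the isomorphism conclusion, but there is no substantive difference.
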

\begin{proof}
$W$ can be written as the free product of the subgroups of $W$ generated by the groups corresponding to the vertices of the connected components of $\Gamma$. Each of those subgroups is the direct product of the groups corresponding to those vertices. Therefore, we can apply Theorems \ref{thm-direct} and \ref{thm-final}.
\end{proof}

Following \cite{charney}, given a graph $\Gamma$ with $\{1,\dots,N\}$ as its set of vertices, we say that the right-angle Coxeter group \emph{defined} by $\Gamma$ is the group:
\[\left<s_1, \dots, s_N | s_i s_j = s_j s_i \text{ for each }i,j\text{ connected by an edge in }\Gamma\right>\]

\begin{cor}
Let $\Gamma$ be a graph as in the statement of Proposition \ref{appl1}, and let $W$ be the right-angle Coxeter group defined by $\Gamma$. Then, $W$ admits a set of simple reflections and a Coxeter element such that the standard and dual Artin groups are isomorphic to each other.
\end{cor}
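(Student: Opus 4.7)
The plan is to realize the right-angle Coxeter group $W$ as the graph product (along $\Gamma$) of $N$ copies of the rank-one Coxeter group $\mathbb{Z}/2\mathbb{Z}$, and then to invoke Proposition \ref{appl1}.

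First I would set, for each vertex $i\in\{1,\dots,N\}$ of $\Gamma$, the rank-one Coxeter system $W_i\coloneqq\langle s_i\mid s_i^2=1\rangle\cong\mathbb{Z}/2\mathbb{Z}$, with simple reflection set $S_i=\{s_i\}$, reflection set $T_i=\{s_i\}$, and Coxeter element $h_i=s_i$. Comparing presentations shows that $W$ is isomorphic to the graph product of $W_1,\dots,W_N$ along $\Gamma$: both groups are presented by the $N$ relations $s_i^2=1$ together with the commutation relations $s_is_j=s_js_i$ for each edge $\{i,j\}$ of $\Gamma$, and the graph product has no further relations.

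Next I would verify that each $(W_i,(s_i))$ is well-stabilized and each $(W_i,T_i,h_i)$ is pan-transitive. Both conditions are immediate in rank one: the braid group $\Br_1$ is trivial, so the Hurwitz action on $1$-tuples is trivial and the stabilizers $H(W_i,(s_i))$ and $\overline{H}(W_i,(s_i))$ coincide automatically; and for each $a\in[1,h_i]_{T_i}=\{1,s_i\}$ the set $\Red_{T_i}(a)$ is a singleton, so the Hurwitz action on it is trivially transitive. With both hypotheses of Proposition \ref{appl1} verified for the chosen building blocks, the conclusion of the corollary follows at once.

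There is no real obstacle here; the only thing to be careful about is the matching of presentations between the right-angle Coxeter group $W$ defined by $\Gamma$ and the graph product of the $W_i$'s, but this is routine because each $W_i$ is generated by a single involution, so the only generator-level relations appearing in the graph product are precisely those present in the right-angle presentation.
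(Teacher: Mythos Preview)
Your proposal is correct and follows exactly the paper's approach: the paper's proof is the single sentence ``This is Proposition \ref{appl1} in the case where each of the groups $W_1,\dots,W_N$ is isomorphic to $\mathbb{Z}/2\mathbb{Z}$.'' You have simply spelled out the (trivial) verification that the rank-one factors satisfy the well-stabilized and pan-transitive hypotheses, which the paper leaves implicit.
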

\begin{proof}
This is Proposition \ref{appl1} in the case where each of the groups $W_1, \dots, W_N$ is isomorphic to $\mathbb{Z}/2\mathbb{Z}$.
\end{proof}

\begin{prop}
\label{oss-raag3}
Let $(W,S)$ be the right-angle Coxeter system defined by a graph $\Gamma$, and suppose $(W,S)$ has rank 3. Then, there exists $(s_1,s_2,s_3)$ such that $\{s_1,s_2,s_3\}=S$, $(W,(s_1,s_2,s_3))$ is well-stabilized and $(W,s_1 s_2 s_3)$ is pan-transitive.
\end{prop}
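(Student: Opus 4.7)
The plan is to proceed by case analysis on the number of edges of $\Gamma$ (which has exactly $3$ vertices): in each case I will choose an ordering $(s_1,s_2,s_3)$ of $S$ under which $W$ decomposes as a free or direct product of smaller Coxeter groups that are already known to be well-stabilized and pan-transitive, and then invoke Theorem~\ref{thm-final} or Theorem~\ref{thm-direct} (or both). The only conceptual point worth flagging in advance is that the path case is the one not covered by Proposition~\ref{appl1}, and it will have to be handled by a bespoke direct-product decomposition.

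If $\Gamma$ has $0$, $1$, or $3$ edges, then after relabeling the vertices its connected components are complete graphs on intervals $\{a,a+1,\ldots,b\}$, so one is in the setting of Proposition~\ref{appl1}. Although the statement of that proposition only gives the isomorphism between $\Art(W,S)$ and $\Art^*(W,T,h)$, its proof -- via repeated applications of Theorems~\ref{thm-final} and~\ref{thm-direct} -- actually yields both well-stabilized and pan-transitive. The pieces entering the decomposition are copies of $\mathbb{Z}/2$ (trivially well-stabilized and pan-transitive since rank $1$) and, in the $1$- and $3$-edge cases, direct products $(\mathbb{Z}/2)^2$ which are spherical and hence covered by Remark~\ref{remark-known-cases}. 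In the $3$-edge case one could alternatively just observe that $W = (\mathbb{Z}/2)^3$ is finite and cite Remark~\ref{remark-known-cases} directly.

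The remaining case is when $\Gamma$ has exactly two edges, forming a path $s_a - s_b - s_c$; this is the only rank-$3$ right-angle system not covered by Proposition~\ref{appl1}, since the single connected component of $\Gamma$ fails to be complete. Here $s_b$ commutes with both $s_a$ and $s_c$ while $s_a$ and $s_c$ do not commute, so
\[
W \;\cong\; \langle s_a, s_c \rangle \times \langle s_b \rangle \;\cong\; (\mathbb{Z}/2 * \mathbb{Z}/2) \times \mathbb{Z}/2 .
\]
I will choose the ordering $(s_1,s_2,s_3)=(s_a,s_c,s_b)$, so that the Coxeter element $s_1 s_2 s_3 = (s_a s_c)\cdot s_b$ is the product of a Coxeter element of each direct factor, and then apply Theorem~\ref{thm-direct} with $W_1 = \langle s_a, s_c \rangle$ and $W_2 = \langle s_b \rangle$. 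The rank-$1$ factor $W_2$ is trivially well-stabilized and pan-transitive, while for $W_1 \cong \mathbb{Z}/2 * \mathbb{Z}/2$ with the ordering $(s_a,s_c)$ the two conditions follow either from Theorems~\ref{thm-fg-hh} and~\ref{thm-ele} applied to the free-product decomposition $W_1 = \langle s_a \rangle * \langle s_c \rangle$ of two rank-$1$ pieces, or alternatively from Remark~\ref{remark-known-cases}, since $W_1$ is affine of type $\tilde A_1$. Theorem~\ref{thm-direct} then delivers both desired conditions for $(W,(s_1,s_2,s_3))$.
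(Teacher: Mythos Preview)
Your proof is correct and follows essentially the same approach as the paper: the paper organizes the case split as ``$\Gamma$ disconnected'' (free product of the components, Theorem~\ref{thm-final}) versus ``$\Gamma$ connected'' (split off a vertex adjacent to all others, Theorem~\ref{thm-direct}), which amounts to exactly your decompositions in the $0/1$-edge and $2/3$-edge cases respectively. Your treatment is slightly more explicit---in particular, your observation that Proposition~\ref{appl1} as stated only asserts the isomorphism while its proof actually yields well-stabilized and pan-transitive is a valid point---but the underlying argument is the same.
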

\begin{proof}
If $\Gamma$ is not connected, then $W$ can be written as the free product of the subgroups of $W$ generated by the elements of $S$ corresponding to the connected components of $\Gamma$. Therefore, if we choose a suitable Coxeter element for $W$, we can apply Theorem \ref{thm-final}.

Meanwhile, if $\Gamma$ is connected, then there is a vertex of $\Gamma$ connected by an edge to each of the other vertices. That vertex corresponds to an element $s \in S$, and $W$ can therefore be written as the direct product of its subgroups generated by $s$ and $S\setminus\{s\}$. Therefore we can apply Theorem \ref{thm-direct}.
\end{proof}
As stated in Remark \ref{remark-known-cases}, it was already known that the standard and dual Artin groups are isomorphic to each other under the assumptions of Proposition \ref{oss-raag3}. However, that Proposition allows us to prove:

\begin{prop}
Let $(W,S)$ be the right-angle Coxeter system defined by a graph $\Gamma$, and suppose $(W,S)$ has rank 4. Then, there exists $(s_1,s_2,s_3,s_4)$ such that $\{s_1,s_2,s_3,s_4\}=S$, $(W,(s_1,s_2,s_3,s_4))$ is well-stabilized and $(W,s_1 s_2 s_3 s_4)$ is pan-transitive, except possibly if $\Gamma$ is the line graph.
\end{prop}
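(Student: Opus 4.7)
The plan is to proceed by case analysis on the isomorphism type of the defining graph $\Gamma$. Up to isomorphism there are eleven graphs on four vertices, and I will split the non-exceptional ones into three families: (i) $\Gamma$ is disconnected; (ii) $\Gamma$ is connected and admits a universal vertex (i.e.\ one adjacent to all others); (iii) $\Gamma = C_4$. The remaining graph $\Gamma = P_4$ (the path on four vertices, the ``line graph'' of the statement) is the announced exception.

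If $\Gamma$ is disconnected, the vertex set partitions into its connected components, yielding $W = W_1 * W_2 * \cdots * W_r$ with $r \ge 2$, each $W_i$ being a right-angle Coxeter group of rank $\le 3$ defined by one component. Rank-$3$ factors admit suitable orderings by Proposition~\ref{oss-raag3}, while rank-$1$ and rank-$2$ factors are either spherical or affine (of type $A_1$, $A_1 \times A_1$, or $\tilde A_1$) and hence satisfy both conditions by Remark~\ref{remark-known-cases}. Iterating Theorem~\ref{thm-final} (grouping factors two at a time) assembles these into a suitable ordering of $S$.

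If $\Gamma$ is connected with a universal vertex $v$---a check of degree sequences shows this covers $K_4$, the diamond $K_4 - e$, the paw, and the star $K_{1,3}$---then $v$ commutes with every other generator, so $W = \langle v \rangle \times \langle S \setminus \{v\} \rangle \cong \Z/2\Z \times W'$, where $W'$ is a rank-$3$ RACG. The first factor is trivially well-stabilized and pan-transitive, and $W'$ admits a suitable ordering by Proposition~\ref{oss-raag3}. Taking $s_1 = v$ and ordering $S \setminus \{v\}$ according to Proposition~\ref{oss-raag3}, Theorem~\ref{thm-direct} delivers the conclusion.

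For $\Gamma = C_4$, label the vertices $a,b,c,d$ so that the edges are $a\!-\!b$, $b\!-\!c$, $c\!-\!d$, $d\!-\!a$. Then $a,c$ do not commute and neither do $b,d$, while every element of $\{a,c\}$ commutes with every element of $\{b,d\}$. Hence $W = \langle a,c \rangle \times \langle b,d \rangle$, with each factor a rank-$2$ Coxeter system with $m = \infty$, i.e.\ the affine Weyl group of type $\tilde A_1$. By Remark~\ref{remark-known-cases} both factors are well-stabilized and pan-transitive; ordering the generators as $(a,c,b,d)$ and invoking Theorem~\ref{thm-direct} finishes this case. The genuine obstacle is $\Gamma = P_4$: there $W$ admits no non-trivial decomposition as a free or direct product of standard parabolic subgroups, so the techniques developed in this paper do not directly apply, which is precisely why this case is excluded from the statement.
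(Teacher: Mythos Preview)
Your proof is correct and follows essentially the same case analysis as the paper: disconnected graphs via iterated free products (Theorem~\ref{thm-final}) with the rank-$\le 3$ pieces handled by Proposition~\ref{oss-raag3} and Remark~\ref{remark-known-cases}; connected graphs with a universal vertex via the direct product $\langle v\rangle \times W'$ (Theorem~\ref{thm-direct}); and $C_4$ via the decomposition $\langle a,c\rangle \times \langle b,d\rangle$ into two copies of $\tilde A_1$. The paper phrases the last case slightly more abstractly (``admits the square graph as a subgraph''), but for four vertices this reduces exactly to your treatment of $C_4$.
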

\begin{proof}
If $\Gamma$ is not connected, then $W$ can be written as the free product of the subgroups of $W$ generated by the elements of $S$ corresponding to the connected components of $\Gamma$. Therefore, if we choose a suitable Coxeter element for $W$, we can apply Theorem \ref{thm-final}, knowing that the assumptions of that theorem are satisfied because of Proposition \ref{oss-raag3}.

If there is a vertex of $\Gamma$ connected by an edge to each of the other vertices. That vertex corresponds to an element $s \in S$, and $W$ can therefore be written as the direct product of its subgroups generated by $s$ and $S\setminus\{s\}$. Therefore we can apply Theorem \ref{thm-direct}, again knowing that the assumptions of that theorem are satisfied because of Remark \ref{oss-raag3}.

Finally, if $\Gamma$ is connected, is not the line graph and has no vertex connected by an edge to each of the other vertices, then it admits the square graph as a subgraph, let $s,s'$ be the elements of $S$ corresponding to a pair of opposite vertices in that subgraph. $W$ can therefore be written as the direct product of its subgroups generated by $\{s,s'\}$ and $S\setminus\{s,s'\}$. Therefore we can apply Theorem \ref{thm-direct}.
\end{proof}


\printbibliography
\addcontentsline{toc}{section}{References}
\end{document}